\author{
  \textbf{Chiara Esposito}\thanks{\texttt{chesposito@unisa.it}},
  \\[0.1cm]
  Dipartimento di Matematica\\
  Universit\`a degli Studi di Salerno\\
  via Giovanni Paolo II, 123\\
  84084 Fisciano (SA)\\
  Italy    
  \\[0.3cm]
  \textbf{Alfonso Giuseppe Tortorella}\thanks{\texttt{alfonsogiuseppe.tortorella@kuleuven.be}},
  \\[0.1cm]
  Department of Mathematics\\
  Katholieke Universiteit (KU) Leuven\\
  Celestijnenlaan 200B box 2400\\
  3001 Leuven\\
  Belgium
  \\[0.3cm]
  \textbf{Luca Vitagliano}\thanks{\texttt{lvitagliano@unisa.it }}
  \\[0.1cm]
  Dipartimento di Matematica\\
  Universit\`a degli Studi di Salerno\\
  via Giovanni Paolo II, 123\\
  84084 Fisciano (SA)\\
  Italy    
  \\[0.3cm]
}
\newcommand{\refitem}[1] {\textit{\ref{#1}.)}}
\numberwithin{equation}{section}
\let\originalleft\left
\let\originalright\right
\renewcommand{\left}{\mathopen{}\mathclose\bgroup\originalleft}
\renewcommand{\right}{\aftergroup\egroup\originalright}
\newtheorem{lemma}{Lemma}[section]
\newtheorem{proposition}[lemma]{Proposition}
\newtheorem{theorem}[lemma]{Theorem}
\newtheorem{corollary}[lemma]{Corollary}
\newtheorem{definition}[lemma]{Definition}
\newtheorem{example}[lemma]{Example}
\newtheorem{remark}[lemma]{Remark}
\def\theorem@checkbold{}
\theoremstyle{nonumberplain}
\newtheorem{proof}{Proof}
\newenvironment{propositionlist}{\begin{compactenum}[\itshape i.)]}{\end{compactenum}}
\newenvironment{definitionlist}{\begin{compactenum}[\itshape i.)]}{\end{compactenum}}
\newcommand{\D}              {\mathop{}\!\mathrm{d}}
\newcommand{\Fun}[1][k]      {C^{#1}}
\newcommand{\Cinfty}         {\Fun[\infty]}
\newcommand{\Hom}            {\operatorname{\mathsf{Hom}}}
\newcommand{\End}            {\operatorname{\mathsf{End}}}
\newcommand{\der}     {\mathsf{D}}
\title{Infinitesimal Automorphisms of VB-Groupoids and Algebroids}
\date{ }
\begin{document}

\setlength{\leftmargini}{4em}

%
%

\maketitle

%
%

\begin{abstract}
  VB-groupoids and algebroids are \emph{vector bundle objects} in the
  categories of Lie groupoids and Lie algebroids respectively, and
  they are related via the \emph{Lie functor}.  VB-groupoids and
  algebroids play a prominent role in Poisson and related
  geometries. Additionally, they can be seen as models for vector
  bundles over singular spaces.  In this paper we study their
  infinitesimal automorphisms, i.e.~vector fields on them generating a
  flow by diffeomorphisms preserving both the linear and the
  groupoid/algebroid structures. For a special class of
  VB-groupoids/algebroids coming from representations of Lie
  groupoids/algebroids, we prove that infinitesimal automorphisms are
  the same as multiplicative sections of a certain derivation
  VB-groupoid/algebroid.
\end{abstract}

%
%
\newpage
\tableofcontents
\newpage

%
%

\section*{Introduction}
\label{sec:Introduction}

This is the first in a series of papers aiming at studying
(multi)differential operators (DO) and jets on Lie groupoids.  Here we
study a class of first order DOs, namely \emph{derivations of vector
  bundles} (also known as \emph{covariant differential
  operators}). Derivations can be conveniently regarded as
infinitesimal automorphisms of the vector bundle structure. Thus, in
the realm of Lie groupoids, it is natural to look at infinitesimal
automorphisms of VB-groupoids and their infinitesimal counterparts,
VB-algebroids.

VB-groupoids are \emph{groupoid objects in the category of vector
  bundles} or, equivalently, \emph{vector bundle objects in the
  category of Lie groupoids} \cite{Mackenzie2005, bursztyn2016,
  Gracia-Saz2010}.  They play an important role in Poisson geometry
and, more generally, in Lie groupoid theory. The tangent and the
cotangent bundle of a Lie groupoid provide canonical examples. More
generally, VB-groupoids can be seen as intrinsic models of two-term
representations up to homotopy of Lie groupoids
\cite{Gracia-Saz2010}. For instance, the whole information about the
\emph{adjoint representation} (\emph{up to homotopy}) of a Lie
groupoid \cite{CA2013} is encoded by its tangent
VB-groupoid. Accordingly, deformations of a Lie groupoid $G$ are
controlled by the \emph{VB-groupoid complex} of $T G$
\cite{crainic2015deformations}.

The infinitesimal counterparts of VB-groupoids are VB-algebroids,
i.e.~\emph{Lie algebroid objects in the category of vector bundles}
or, equivalently, \emph{vector bundle objects in the category of Lie
  algebroids} \cite{Mackenzie2005, bursztyn2016, gracia2010lie,
  Drummond2015}.  The tangent and the cotangent bundle of a Lie
algebroid provide canonical examples.  Similarly to VB-groupoids,
VB-algebroids can be seen as intrinsic models of two-term
representations up to homotopy of Lie algebroids \cite{gracia2010lie}.
The \emph{adjoint representation} (\emph{up to homotopy}) of a Lie
algebroid is encoded by its tangent VB-algebroid and deformations of a
Lie algebroid $A$ are controlled by the \emph{VB-algebroid complex} of
$TA$ \cite{Crainic2008}. Additionally, the deformation complex of a
Lie groupoid $G$ and the deformation complex of its Lie algebroid $A$
are intertwined by a Van Est map \cite{WX1991, C2003} (see also
\cite{CD2017, BGV2017} for \emph{homogeneous versions} of the Van Est
theorem), which is yet another manifestation of the action of the
\emph{Lie functor} \cite{crainic2015deformations}.

Because of their importance in Lie groupoid/algebroid theory, it is
natural to look at infinitesimal automorphisms of VB-groupoids and
VB-algebroids and at how the Lie functor intertwines them. Notice that
infinitesimal automorphisms of a Lie groupoid are the same as
\emph{multiplicative vector fields} on it
\cite{Mackenzie1997}. Similarly, infinitesimal automorphisms of a
VB-groupoid are (equivalent to) its \emph{multiplicative
  derivations}. Multiplicative vector fields on Lie groupoids are
important objects. In particular, their theory includes the classical
liftings of a vector field on the tangent and the cotangent bundle,
which are of general interest in differential geometry
\cite{Mackenzie1997}. We expect multiplicative derivations to have
similar importance. In this respect, this paper is also part of the
research line aiming at studying \emph{multiplicative structures} on
Lie groupoids and their infinitesimal counterparts (see \cite{KS2015}
for a recent review), i.e.~those geometric data which are compatible
with the groupoid, resp.~algebroid, structure.

The study of multiplicative structures was originally motivated by
Poisson-Lie groups \cite{Drinfeld1983} and symplectic groupoids
\cite{CDW1987, W1987, Z1990, Z1990b} and was pursued by several
authors. Recent research has concentrated on multiplicative structures
of tensorial type: multiplicative differential forms
\cite{bursztyn2012multiplicative} and multivectors
\cite{Iglesias-Ponte2012}, multiplicative (1,1) tensors \cite{SX2007},
multiplicative generalized complex structures \cite{JSX2016} (see also
\cite{C2011, BG2016}), and, encompassing all the previous ones,
generic multiplicative tensors \cite{BD2017}, with few exceptions:
multiplicative foliations \cite{J2012, JO2014}, multiplicative Dirac
structures \cite{O2008, O2013, J2014}, multiplicative vector-bundle
valued differential forms \cite{CSS2015, DE2019}.  Non-tensorial
multiplicative structures (other than the few examples listed above)
have not been considered yet.  In particular, a systematic
investigation (including a definition) of multiplicative (multi)DOs,
multiplicative jets, and their infinitesimal counterparts, is still
unavailable. 

There are two additional motivations to study multiplicative
(multi)DOs and jets.  One comes from Jacobi geometry and is
represented, on one hand, by Jacobi-Lie groups \cite{IM2003}, a wide
generalization of Poisson-Lie groups and, on another hand, contact
groupoids (i.e.~Lie groupoids equipped with a multiplicative contact
structure \cite{KS1993, L1993, D1995}). A Jacobi structure on a
manifold $M$ is a line bundle $L \to M$ together with a Lie bracket on
its sections which is also a first order biDO (see, e.g.,
\cite{M1991}). Jacobi structures encompass Poisson, contact, and
locally conformal symplectic structures as particular cases. The case
when the line bundle $L \to M$ is equipped with a global
trivialization has often been an object of study.  In particular
Iglesias and Marrero adopt this approach to their definition of a
Jacobi groupoid, i.e.~a Lie groupoid with a compatible Jacobi
structure \cite{IM2001, IM2003a}. However, probably, this is not the
best choice from a conceptual point of view (see, e.g.,
\cite{BGG2015}). Not to mention that there are important Jacobi
structures whose underlying line bundle is not at all trivial. In
\cite{BGG2015} the authors study Jacobi manifolds intrinsically,
viewing them as homogeneous Poisson manifolds, in particular they
define Jacobi groupoids as \emph{homogeneous Poisson groupoids}. There
is an alternative, more algebraic approach, requiring the technology
of derivations of vector bundles (see, e.g.~\cite{LOTV14,
  vitagliano2015dirac}). In particular, in order to apply the latter
approach to a Jacobi groupoid, one needs to understand multiplicative
(multi)derivations of VB-groupoids first. Here, we begin this program
by studying multiplicative derivations of a
VB-groupoid. Multiplicative multiderivations will be addressed in a
subsequent paper \cite{ETV20XX} (see also \cite[Section 2.7]{VW2017}).

Our second motivation comes from the geometric theory of PDEs.
Geometrically, a PDE is a submanifold in a jet space. Symmetries
provide important geometric invariants of a PDE. Symmetries of a PDE
form a Lie pseudogroup and the latter can be seen as a Lie subgroupoid
of a jet groupoid. There are several natural (multi)DOs and jets
attached to a jet space, in particular a jet groupoid \cite{Costa2015,
  S2013, CSS2015, Y2016}. One should expect that natural DOs/jets on
jet groupoids are also multiplicative, so falling into the class our
program aims to define and study. Additionally, zero loci of
multiplicative DOs will provide an important class of PDEs on sections
of vector bundles over Lie groupoids.

Finally, it is worth to mention that it has been recently shown that
VB-groupoids can be seen as linear atlases on vector bundles over
differentiable stacks \cite{dHO2016}, so paving the way for a theory
of vector bundles over singular spaces such as orbifolds, leaf spaces
of foliations, orbit spaces of group actions, etc. This paper can then
be seen as a study of \emph{derivations of vector bundles over
  stacks}.

The paper is divided into four main sections. Section
\ref{sec:VB-grpd_alg} is a short review of VB-groupoids and
VB-algebroids. Besides the book by Mackenzie \cite{Mackenzie2005}, our
main references for this material are \cite{bursztyn2016,
  gracia2010lie, Gracia-Saz2010}.
In Section \ref{sec:inf_aut} we define and begin our study of
\emph{multiplicative derivations}. Let $\mathcal W \rightrightarrows E$ be
a VB-groupoid over a Lie groupoid $G \rightrightarrows M$, and let $W
\Rightarrow E$ be a VB-algebroid over a Lie algebroid $A \Rightarrow
M$. We also write $(\mathcal W \rightrightarrows E; G \rightrightarrows
M)$ and $(W \Rightarrow E; A \Rightarrow M)$ for these data. We say
that a derivation of $\mathcal W \to G$ is \emph{multiplicative} if it
generates a flow by VB-groupoid automorphisms. Similarly, a derivation
of $W \to A$ is \emph{infinitesimally multiplicative} (\emph{IM}) if
it generates a flow by VB-algebroid automorphisms. It immediately
follows that if the Lie functor maps $(\mathcal W \rightrightarrows E; G
\rightrightarrows M)$ to $(W \Rightarrow E; A \Rightarrow M)$, then
it also maps multiplicative derivations of $\mathcal W \to G$ to IM
derivations of $W \to A$. Additionally, if $G \rightrightarrows M$ is
source simply connected (hence $\mathcal W \rightrightarrows E$ is source
simply connected \cite{bursztyn2016}) then the Lie functor establishes
a one-to-one correspondence between multiplicative derivations of
$\mathcal W \to G$ and IM derivations of $W \to A$.  Our main result in
this section is an algebraic description of IM derivations of a
VB-algebroid $(W \Rightarrow E; A \Rightarrow M)$ (Theorem
\ref{theor:delta})
which parallels a result by Mackenzie and Xu in \cite{Mackenzie1997}
stating that there is a one-to-one correspondence between IM vector
fields on a Lie algebroid and Lie algebroid derivations. We also prove
that multiplicative derivations (resp.~IM derivations) are
$1$-cocycles in a certain \emph{linear} subcomplex of the
\emph{deformation complex} of the Lie groupoid $\mathcal W$ (resp.~Lie
algebroid $W$) (Proposition \ref{prop:1-cocycles_grpd},
resp.~Corollary \ref{cor:1-cocycles_alg}).

%
Derivations of a vector bundle $E \to M$ are sections of the
\emph{gauge algebroid} $\der E \to M$ (see Section
\ref{sec:derivations}). Unfortunately, unlike the tangent bundle of a
groupoid (resp.~algebroid), the gauge algebroid of a VB-groupoid
(resp.~algebroid) is not a groupoid itself in general. Hence, unlike
multiplicative vector fields \cite{Mackenzie1997}, multiplicative
(resp.~IM) derivations of a VB-groupoid $(\mathcal W \rightrightarrows
E; G \rightrightarrows M)$ (resp.~algebroid $(W \Rightarrow E; A
\Rightarrow M)$) can not be characterized, in general, as groupoid
(resp.~algebroid) morphisms $G \to \der \mathcal W$ (resp.~$A \to
\der W$). In Section~\ref{sec:trivial-core}, we isolate a particular
class of VB-groupoids (resp.~algebroids) for which the associated
gauge algebroid is a VB-groupoid (resp.~algebroid) itself, namely
either \emph{trivial-core} or \emph{full-core VB-groupoids}
(resp.~\emph{algebroids}) and prove that in this case
multiplicative (resp.~IM) derivations can be characterized as morphic
sections of the \emph{gauge VB-groupoid} (resp.~\emph{algebroid})
(Theorems \ref{theor:der_VB-grpd}, \ref{theor:der_VB-alg}).
This parallels similar results for multiplicative (resp.~IM) vector
fields and the tangent VB-groupoid (resp.~algebroid)
\cite{Mackenzie1997}.
Finally, Section~\ref{sec:applications} contains an application 
of the results of Section~\ref{sec:trivial-core} to representations up to homotopy.

\section{Preliminaries}
\label{sec:VB-grpd_alg}

\subsection{VB-groupoids}

In this section, we recall basic aspects of the theory of VB-groupoids
and VB-algebroids. We refer to \cite{bursztyn2016, Mackenzie2005,
  gracia2010lie, Gracia-Saz2010} for more details. Given a Lie
groupoid $G \rightrightarrows M$, we usually denote by $s, t : G \to
M$ the source and target respectively, by $u : M \to G$ the unit, by
$G^{(2)} = G \mathbin{{}_{s} \times_t} G \subset G \times G$ the
submanifold of composable arrows, by $m : G^{(2)} \to G$ the
multiplication, and by $i : G \to G$ the inversion.  Alternatively, we
denote the unit by $x \mapsto 1_x$, the multiplication by $(g,h)
\mapsto gh$, and the inversion by $g \mapsto g^{-1}$.  For $g \in G$,
we write $g : x_1 \to x_2$ to mean that $s(g) = x_1$ and $t(g) =
x_2$. Finally, we identify $M$ with its image under $u$.  Consider a
diagram
\begin{equation}
  \begin{gathered}
    \label{diag:VB_groupoid}
  \begin{tikzcd}
    \mathcal W \arrow[d, swap, "q_{\mathcal W}"]\arrow[r, shift left] \arrow[shift right]{r}& E\arrow[d, "q"]
    \\
    G\arrow[r, shift left] \arrow[shift right]{r}& 
    M
  \end{tikzcd}
    \end{gathered}
\end{equation}
where the rows are Lie groupoids and the columns are vector bundles.
We denote by $\tilde s, \tilde t, \tilde u, \tilde m, \tilde i$ the
structure maps of the top row (source, target, unit, multiplication,
inversion respectively) and by $s, t, u, m, i$ the structure maps of
the bottom row. Additionally, we denote by $+_\mathcal W : \mathcal W
\times_G \mathcal W \to \mathcal W$ and $+_E : E \times_M E \to E$ the
additions, by $\lambda \cdot_\mathcal W : \mathcal W \to \mathcal W$
and $\lambda \cdot_E : E \to E$ the scalar multiplications by $\lambda
\in \mathbb R$, by $0_\mathcal W : G \to \mathcal W$ and $0_E : M \to
E$ the zero sections.  Notice that throughout this paper, for any
vector bundle $q:V\to N$, we will denote by $s_x$ the value of a
section $s\in\Gamma(V)$ at a point $x\in N$.
\begin{definition}[VB-groupoid] 
  \label{def:VBGroupoid}
  A \emph{VB-groupoid} is a \emph{vector bundle object in the category of Lie
  groupoids}, i.e.~a diagram like \eqref{diag:VB_groupoid} such that
  $(q_\mathcal W, q)$ and $(+_\mathcal W, +_E)$ are Lie groupoid morphisms.
  Equivalently, a VB-groupoid is a \emph{Lie groupoid object in the category
  of vector bundles}, i.e.~a diagram \eqref{diag:VB_groupoid} such that
  $(\tilde s, s)$, $(\tilde t, t)$ and $(\tilde m, m)$ are 
  vector bundle morphisms.
\end{definition} 
It follows from the definition, that $(\lambda \cdot_\mathcal W,
\lambda \cdot_E)$ and $( 0_\mathcal W, 0_E)$ are also Lie groupoid
morphisms, for all $\lambda \in \mathbb R$ and that $(\tilde u, u)$
and $(\tilde i, i)$ are also vector bundle morphisms.
When there is no risk of ambiguity, we simply denote by $(\mathcal W
\rightrightarrows E; G \rightrightarrows M)$ (or even just $\mathcal W$) a
VB-groupoid, thus understanding all structure maps.  
\begin{definition}[VB-groupoid morphism]
  \label{def:VBGroupoidMorph}
  Let $(\mathcal W_1 \rightrightarrows E_1; G_1\rightrightarrows M_1)$
  and $(\mathcal W_2\rightrightarrows E_2; G_2\rightrightarrows M_2)$
  be VB-groupoids.  A \emph{morphism of VB-groupoids}
  \begin{equation}
    \label{eq:morphism_VB_groupoids}
    (F_\mathcal W, \tilde F; F_G, F) : (\mathcal W_1\rightrightarrows E_1; G_1\rightrightarrows M_1) \to
    (\mathcal W_2\rightrightarrows E_2; G_2\rightrightarrows M_2)
  \end{equation}
  is a (commutative) diagram
  \begin{equation}
    \label{diagram:morph_VB_fibr}
    \begin{gathered}
    \begin{tikzcd}
      & \mathcal W_1 
      \arrow[rr, "F_\mathcal W", pos=0.7]
      \arrow[ld, start
        anchor={[xshift=.5ex, yshift=.5ex]}, shift left]
      \arrow[ld,
        start anchor={[xshift=.5ex, yshift=.5ex]}, shift right]
      \arrow[dd]& 
      & \mathcal W_2
      \arrow[dd]
      \arrow[ld, start
        anchor={[xshift=.5ex, yshift=.5ex]}, shift left]
      \arrow[ld,
        start anchor={[xshift=.5ex, yshift=.5ex]}, shift right]
      \\ E_1
      \arrow[rr, crossing over, "\tilde{F}",
        pos=0.7]
      \arrow[dd]&&E_2& \\ & G_1
      \arrow[rr, "F_G",
        pos=0.7]
      \arrow[ld, start anchor={[xshift=.5ex, yshift=.5ex]},
        shift left]
      \arrow[ld, start anchor={[xshift=.5ex,
            yshift=.5ex]}, shift right]&&G_2
      \arrow[ld, start
        anchor={[xshift=.5ex, yshift=.5ex]}, shift left]
      \arrow[ld,
        start anchor={[xshift=.5ex, yshift=.5ex]}, shift right] 
      \\ M_1
      \ar[rr, "F", pos=0.7]&&M_2
      \arrow[from=uu, crossing over]&
    \end{tikzcd}
    \end{gathered}
  \end{equation}
  such that the following conditions are satisfied:
  \begin{definitionlist}
  \item $(F_\mathcal W, \tilde F)$ and $(F_G,
    F)$ are morphisms of groupoids,
  \item $(F_\mathcal W, F_G)$ and $(\tilde F,
    F)$ are morphisms of vector bundles.
  \end{definitionlist}
\end{definition}
Notice that the map $F_\mathcal W$ in \eqref{eq:morphism_VB_groupoids}
determines the morphism completely. Accordingly, we also say that
$F_\mathcal W : \mathcal W_1 \to \mathcal W_2$ is a morphism of
VB-groupoids.

The (\emph{right}) \emph{core} of a VB-groupoid $(\mathcal
W\rightrightarrows E; G\rightrightarrows M)$ is the vector bundle $C
\to M$ defined by setting
\begin{equation}
  C 
  := 
  \{ \omega \in \mathcal W : \tilde s (\omega) = 0, \text{ and } q_\mathcal W (\omega) = 1 \}.
\end{equation}
In other words, $C = u^\ast (\ker \tilde s) = \ker \tilde s|_M$ is the
pull-back of the vector subbundle $\ker \tilde s \subset \mathcal W$
along the unit. There is a similar notion of left core where the
target is in place of the source. It turns out that the right and the
left cores are canonically isomorphic as vector bundles. Hence they
are manifestations of a unique vector bundle which can be safely
referred to as the \emph{core}.
  %

The dual vector bundle $\mathcal W^\ast$ is a Lie groupoid over
$C^\ast$, and, actually, $(\mathcal W^\ast\rightrightarrows C^\ast;
G\rightrightarrows M)$ is a VB-groupoid, the \emph{dual VB-groupoid}
of $(\mathcal W \rightrightarrows E; G \rightrightarrows M)$.
Additionally the VB-groupoids $\mathcal W$ and $\mathcal W^{\ast\ast}$
are canonically isomorphic. We refer to \cite{Mackenzie2005} for more
details on this construction. Here we only notice that given a
VB-groupoid isomorphism $F_\mathcal W : \mathcal W_1 \to \mathcal
W_2$, its transpose $F_\mathcal W^\ast : \mathcal W_2^\ast \to
\mathcal W_1^\ast$ is a VB-groupoid isomorphism as well. In
particular, VB-groupoid automorphisms $\mathcal W \to \mathcal W$ are
equivalent to VB-groupoid automorphisms $\mathcal W^\ast \to \mathcal
W^\ast$.

\begin{example}[Tangent VB-groupoid]
  \label{ex:tangent_VB_grpd}
  Let $G \rightrightarrows M$ be a Lie groupoid with Lie algebroid
  $A$. Then $(T G, TM; G, M)$ is a VB-groupoid, called the
  \emph{tangent VB-groupoid to $G$}, in a natural way: the structure
  maps are the tangent maps to the structure maps of $G$. The core of
  $(T G\rightrightarrows TM; G\rightrightarrows M)$ is $A$.
\end{example}

\subsection{VB-algebroids}

We now turn to the infinitesimal counterpart of VB-groupoids, the
so-called VB-algebroids.  First of all, given a Lie algebroid $A
\Rightarrow M$, we usually denote by $\rho : A \to TM$ the anchor and
by $[-,-] : \Gamma (A) \times \Gamma (A) \to \Gamma (A)$ the Lie
bracket.
Consider a diagram
\begin{equation}
  \label{diag:VB_algebroid}
  \begin{gathered}
    \begin{tikzcd}
    W \arrow[d, swap, "q_W"]\arrow[r, "\tilde p"]&E \arrow[d, "q"] \\
    A \arrow[r, "p"]& M
  \end{tikzcd}
  \end{gathered}
\end{equation}
where the rows and the columns are vector bundles. We denote by
$+_{W,A} : W \times_A W \to W$, $+_{W,E} : W \times_E W \to W$ the
additions, by $\lambda \cdot_{W,A} : W \to W$ and $\lambda \cdot_{W,E}
: W \to W$ the scalar multiplications by $\lambda \in \mathbb R$ (in
the fibers over $A$ and $E$ respectively), and by $0_{W,A} : A \to W$
and $0_{W,E} : E \to W$ the zero sections.
If the maps $(q_W, q)$ and $(+_{W,A}, +_E)$ are vector bundle
morphisms, or, equivalently $(\tilde p, p)$ and $(+_{W,E}, +_A)$ are
vector bundle morphisms, then \eqref{diag:VB_algebroid} is a
\emph{double vector bundle} \cite[Chapter 9]{Mackenzie2005}, denoted
$(W \to E; A \to M)$, and it follows that $(\lambda \cdot_{W,A},
\lambda \cdot_E)$, $( 0_{W,A}, 0_E)$, $(\lambda \cdot_{W,E}, \lambda
\cdot_A)$ and $(0_{W,E}, 0_A)$ are also vector bundle morphisms.
\begin{remark}[Core and linear sections of a double vector bundle] 
  \label{rem:linear_sections}
  Recall that the core of a double vector bundle
  \eqref{diag:VB_algebroid} is the vector bundle
  \begin{equation}
    C 
    := 
    \ker \tilde p \cap \ker q_W \to M.
  \end{equation} 
  Vector bundle $W \to E$ has two distinguished classes of sections:
  \begin{definitionlist}
  \item \emph{core sections}: for every section $\chi$ of the core $C \to
    M$, there is a section $\widehat \chi$ of $W \to E$ defined by:
    \begin{equation}
      \widehat \chi_e 
      := 
      (0_{W, E})_e +_{W,A} \chi_{q(e)}
    \end{equation}
    for all $e \in E$. 
  \item \emph{linear sections}: they are sections $\tilde a$ of 
    $W \to E$ such that $(\tilde a, a)$ is a vector bundle
    morphism for a (necessarily unique) section $a$ of $A$.
  \end{definitionlist}
  We denote by $\Gamma (W, E)$ the space of all
  sections of $W \to E$, by $\Gamma_c (W, E)$ the subspace of core sections and by
  $\Gamma_\ell (W, E)$ the subspace of linear sections.
  Linear and core sections generate $\Gamma (W, E)$ as a $C^\infty
  (E)$-module.  Actually, linear sections alone generate $\Gamma (W,
  E)$ away from the (image of) the zero section of $E$, i.e.,
  for every point $e \in E \smallsetminus 0_E (M)$, there exists an open neighborhood $\mathcal U$
  of $e$ such that the $C^\infty (\mathcal U)$-module of local sections of $W \to E$ defined on $\mathcal U$ is spanned 
  by linear sections.  This is easily  seen in local coordinates adapted to the double vector bundle  structure.
\end{remark}
From now on, we also assume that the rows of
\eqref{diag:VB_algebroid} possess Lie algebroid structures $W
\Rightarrow E$, and $A \Rightarrow M$. We denote by $\tilde\rho :
W \to TE$ the anchor.
\begin{definition}[VB-algebroid] 
  \label{def:VBAlgebroid}
  A \emph{VB-algebroid} is a \emph{vector bundle object in the
    category of Lie algebroids}, i.e.~a diagram like
  \eqref{diag:VB_algebroid} such that $(q_W, q)$ and $(+_{W,A}, +_E)$
  are Lie algebroid morphisms.
  Equivalently, a VB-algebroid is a \emph{Lie algebroid object in the
    category of vector bundles}, i.e.~a diagram
  \eqref{diag:VB_algebroid} such that $(\tilde p, p)$, $(+_{W,E},
  +_A)$ and $(\tilde\rho, \rho)$ are morphisms of vector bundles, and
  the Lie bracket in $\Gamma(W, E)$ satisfies the following
  \emph{linearity conditions}:
  \begin{definitionlist}
  \item $[\Gamma_\ell(W, E), \Gamma_\ell (W, E)] \subset \Gamma_\ell
    (W, E)$,
  \item $[\Gamma_\ell(W, E), \Gamma_c (W, E)] \subset \Gamma_c (W,
    E)$,
  \item $[\Gamma_c(W, E), \Gamma_c (W, E)] = 0$.
  \end{definitionlist}
  %
\end{definition}
We denote simply by $(W \Rightarrow E; A \Rightarrow M)$ a
VB-algebroid and when there is no risk of ambiguity we only use
$W$.
\begin{definition}[VB-algebroid morphism]
  \label{def:VBAlgebroidMorph}
  Let $(W_1\Rightarrow E_1; A_1\Rightarrow M_1)$ and 
  $(W_2\Rightarrow E_2; A_2\Rightarrow M_2)$ be VB-algebroids.
  A \emph{morphism of VB-algebroids}
  \begin{equation}\label{eq:morphism_VB_algebroids}
    (F_W, \tilde F; F_A, F) 
    : 
    (W_1\Rightarrow E_1; A_1\Rightarrow M_1) \to (W_2\Rightarrow E_2; A_2\Rightarrow M_2)
  \end{equation}
  is a (commutative) diagram
  \begin{equation}
    \label{diagram:morph_VB_algbd}
    \begin{gathered}
        \begin{tikzcd}
      &W_1\arrow[rr, "F_W", pos=.7]\arrow[ld, Rightarrow]\arrow[dd]&&W_2\arrow[dd]\arrow[dl, Rightarrow]\\
      E_1 \arrow[rr, "\tilde{F}", pos=.7, crossing over]\arrow[dd]& & E_2&\\
      &A_1\arrow[rr, "F_A", pos=.7] \arrow[dl, Rightarrow]& & A_2\arrow[dl, Rightarrow]\\
      M_1\arrow[rr, "F", pos=.7]&&M_2\arrow[from=uu, crossing over]&
    \end{tikzcd}
    \end{gathered}
  \end{equation} 
  such that the following conditions are satisfied:
  \begin{definitionlist}
  \item $(F_W, \tilde F)$ and $(F_A,
    F)$ are morphisms of Lie algebroids
  \item $(F_W, F_A)$ and $(\tilde F,
    F)$ are morphisms of vector bundles.
  \end{definitionlist}
\end{definition}
The map $F_W$ in \eqref{diagram:morph_VB_algbd} determines the
morphism completely. Accordingly, we sometimes say that $F_W : W_1 \to
W_2$ is a morphism of VB-algebroids.
%


Similarly as VB-groupoids, VB-algebroids can be dualized.  Let $(W
\Rightarrow E; A \Rightarrow M)$ be a VB-algebroid with core $C \to
M$. Then the dual $W^\ast$ of the vector bundle $W \to A$ is a
VB-algebroid and $(W^\ast \Rightarrow C^\ast; A \Rightarrow M)$ is a
VB-algebroid, called the \emph{dual} VB-algebroid of $(W \Rightarrow
E; A \Rightarrow M)$. Additionally the VB-algebroids $W$ and
$W^{\ast\ast}$ are canonically isomorphic.  We refer to
\cite{Mackenzie2005} again for more details. Here we only notice that
similarly as for VB-groupoids, given a VB-algebroid isomorphism $F_W :
W_1 \to W_2$, its transpose $F_W^\ast : W_2^\ast \to W_1^\ast$ is a
VB-algebroid isomorphism as well. Hence VB-algebroid automorphisms $W
\to W$ are equivalent to VB-algebroid automorphisms $W^\ast \to
W^\ast$.
  
\begin{example}\label{ex:tangent_VB}
  Let $E \to M$ be a vector bundle. Then $(TE\Rightarrow E;
  TM\Rightarrow M)$ is a VB-algebroid in the obvious way.
\end{example}

\begin{example}[Tangent VB-algebroid]\label{ex:tangent}
  Let $A \Rightarrow M$ be a Lie algebroid. Then $(TA\Rightarrow TM;
  A\Rightarrow M)$ is a VB-algebroid, called the \emph{tangent
    VB-algebroid to $A$}, in a natural way. The projection $TA \to TM$
  and the anchor $\tilde\rho : TA \to TTM$ are the tangent map to the
  projection $A \to M$, and the composition of the tangent map to the
  anchor $\rho : A \to TM$, followed by the canonical
  \emph{involution} $TTM \to TTM$ (see, e.g., \cite[Section
    9.6]{Mackenzie2005}). The Lie bracket
  \begin{equation}
    \label{eq:bracket_TA}
          [-,-] : \Gamma (TA ,TM) \times \Gamma(TA ,TM) \to \Gamma (TA,TM)
  \end{equation}
  is uniquely defined by its action on certain distinguished sections,
  as follows.  First notice that $(TA \to TM; A \to M)$ is a double vector
  bundle with core canonically isomorphic to $A$ itself. The
  isomorphism identifies the tangent space to a fiber of $A \to M$ at
  the origin, with the fiber itself. Now, consider the following
  distinguished classes of sections of $TA \to TM$:
  \begin{definitionlist}
  \item core sections,
  \item linear sections of the form $\D a : TM \to TA$ for some $a \in
    \Gamma (A)$.
  \end{definitionlist}
  We denote by $\widehat a_T : TM \to TA$ the core section
  corresponding to $a \in \Gamma (A)$, seen as a section of the
  core. We have
  \begin{equation}
    \label{eq:core_TA}
    (\widehat a_T)_v = (\D 0_A )(v) + a^{\uparrow}_{(0_A)_x}
  \end{equation}
  for all $v \in T_x M$, $x \in M$, where $a^{\uparrow} \in \mathfrak
  X (A)$ is the \emph{vertical lift of $a$} (see also Remark
  \ref{rem:linear_VF}).  Then bracket \eqref{eq:bracket_TA} is
  completely determined by
  \begin{equation}
    \label{eq:bracket_TA_on_sections}
    \begin{aligned}
      [\D a, \D b] & = \D [a,b], \\
      [\D a, \widehat b_T] & = \widehat{[a,b]}_T, \\
      [\widehat a_T, \widehat b_T] &= 0,
    \end{aligned}
  \end{equation}
  for all $a,b \in \Gamma(A)$. Finally, notice that the
  VB-algebroid structures on $(TA\Rightarrow A; TM\Rightarrow M)$ and
  $(TA\Rightarrow TM; A\Rightarrow M)$ are compatible in the sense
  that they provide an instance of a \emph{double Lie algebroid}
\cite{M1992, M2000}.
\end{example}

\begin{remark}[VB-groupoids/algebroids and homogeneity structures]
  \label{rem:hom_struct}
  There is an alternative definition of a VB-groupoid/algebroid that
  is often useful in practice. Namely, a vector bundle is the same as
  a manifold $E$ equipped with a \emph{regular homogeneity structure},
  i.e.~a \emph{regular} action $h: \mathbb R \times E \to E$, written
  $(\lambda, e) \mapsto h_\lambda (e)$, of the monoid $(\mathbb R,
  \cdot)$. The base manifold is then $M = h_0 (E)$ and the projection
  $E \to M$ is $h_0$.  The infinitesimal generator of
  $h_{\operatorname{exp} \lambda}$ is the \emph{Euler vector field} on
  $E$. We will not explain here the word \emph{regular}: we refer to
  \cite{GR2009} for more details (see also \cite[Definition
    2.1.1]{bursztyn2016}).  Similarly, a VB-groupoid (resp.~algebroid)
  is equivalent to a groupoid $\mathcal W \rightrightarrows E$
  (resp.~algebroid $W \to E$) equipped with a regular action $h:
  \mathbb R \times \mathcal W \to \mathcal W$ (resp.~$h: \mathbb R
  \times W \to W$) of the monoid $(\mathbb R, \cdot)$ with the
  additional property that $h_\lambda : \mathcal W \to \mathcal W$
  (resp.~$h_\lambda : W \to W$) is a Lie groupoid (resp.~algebroid)
  morphism for all $t$.  In particular, a diagram like
  \eqref{diag:VB_groupoid} is a VB-groupoid if and only if $\lambda
  \cdot_\mathcal W : \mathcal W \to \mathcal W$ (multiplication by
  $\lambda$ in the fibers of $\mathcal W \to G$) is a groupoid
  morphism for all $\lambda$, and similarly for VB-algebroids. Notice
  that using this point of view a diagram like
  (\ref{eq:morphism_VB_groupoids})
  (resp.~\eqref{eq:morphism_VB_algebroids}) is a morphism if and only
  if $F_\mathcal W$ (resp.~$F_W$) commutes with the homogeneity
  structures.
\end{remark}

\subsection{Differentiation and integration}\label{sec:diff_int}

We conclude this section discussing briefly \emph{differentiation} and
\emph{integration} issues. We refer to \cite{bursztyn2016} for
details. Denote by $\mathsf{Lie}$ the Lie functor mapping Lie
groupoids to Lie algebroids (and Lie groupoid morphisms to Lie
algebroid morphisms). Let $(\mathcal W \rightrightarrows E; G
\rightrightarrows M)$ be a VB-groupoid. We denote by $W = \mathsf{Lie}
(\mathcal W)$ the Lie algebroid of $\mathcal W$, and by $A =
\mathsf{Lie} (G)$ the Lie algebroid of $G$. Then $(W\Rightarrow E;
A\Rightarrow M)$ is a VB-algebroid in a natural (and obvious) way, and
it is called the \emph{VB-algebroid of $(\mathcal W \rightrightarrows
  E;G\rightrightarrows M)$}.  We also denote the latter by
$\mathsf{Lie}(\mathcal W \rightrightarrows E; G \rightrightarrows
M)$. Additionally,
\begin{definitionlist}
\item $(\mathcal W \rightrightarrows E; G \rightrightarrows M)$ and $(W
  \Rightarrow E; A \Rightarrow M)$ have canonically isomorphic cores
  (we denote both by $C$), and
\item $\mathsf{Lie}(\mathcal W^\ast \rightrightarrows C^\ast;
  G\rightrightarrows M) \cong (W^\ast\Rightarrow C^\ast; A\Rightarrow
  M)$ canonically.
\end{definitionlist}
Similarly, the Lie functor maps morphisms of VB-groupoids to morphisms
of VB-algebroids.
Conversely, let $(W \Rightarrow E; A \Rightarrow M)$ be a
VB-algebroid. If $W$ is integrable and $\mathcal W \rightrightarrows
E$ is its source simply connected integration, then 1) $A$ is
integrable as well, and, denoted by $G \rightrightarrows M$ its source
simply connected integration, 2) $(\mathcal W \rightrightarrows E; G
\rightrightarrows M)$ is a VB-groupoid in a natural way. Finally, if
$(\mathcal W'\rightrightarrows E'; G'\rightrightarrows M')$ is another
VB-groupoid, $(W'\Rightarrow E; A' \Rightarrow M')$ is its
VB-algebroid and $F_W : W \to W'$ is a morphism of VB-algebroids, then
$F_W$ integrates to a unique morphism $F_\mathcal W : \mathcal W \to
\mathcal W'$ of VB-groupoids.
\begin{remark}
  \label{rem:tangent_VB}
  Let $G \rightrightarrows M$ be a Lie groupoid and let $A =
  \mathsf{Lie} (G)$ be its Lie algebroid. Then the VB-algebroid of the
  tangent VB-groupoid to $G$, is (canonically isomorphic to) the
  tangent VB-algebroid to $A$: $\mathsf{Lie}(TG\rightrightarrows TM;
  G\rightrightarrows M) \cong (TA\Rightarrow TM; A\Rightarrow M)$.
\end{remark}

%
%

\section{Infinitesimal automorphisms of VB-groupoids and algebroids}
\label{sec:inf_aut}

\subsection{Derivations}
\label{sec:derivations}

Let $E\to M$ be a vector bundle, and let $x \in M$.  A
\emph{derivation} at $x$ is an $\mathbb R$-linear map $\Delta : \Gamma
(E) \to E_x$ such that
\begin{equation}
  \Delta (fe)
  =
  v(f)e_x + f(x)\Delta(e)
\end{equation}
for all $f \in \Cinfty (M)$, $e \in \Gamma (E)$ and some (necessarily
unique) tangent vector $v \in T_x M$.  We also call $v$ the symbol of
$\Delta$, and denote it by $\sigma(\Delta)$. 

\color{black} It is useful to mention that there is an alternative (more geometric) 
description of derivations at $x$. Let $\varepsilon \mapsto
\gamma (\varepsilon)$ be a smooth curve in $M$ such that $\gamma (0) =
x$ and let $\varepsilon \mapsto \gamma_E (\varepsilon) : E_x \to
E_{\gamma (\varepsilon)}$ be a smooth curve of linear isomorphisms
(covering $\gamma$) such that $\gamma_E (0) =
\operatorname{id}_{E_x}$. The \emph{velocity} of $\gamma_E$ is the
following derivation at $x$
\begin{equation}
  \left.\frac{d}{d\varepsilon}\right|_{\varepsilon = 0} \gamma_E (\varepsilon) 
  : 
  \Gamma (E) \to E_x, 
  \quad 
  e \mapsto \left(\left.\frac{d}{d\varepsilon}\right|_{\varepsilon = 0} \gamma_E (\varepsilon)\right) e 
  := \left.\frac{d}{d\varepsilon}\right|_{\varepsilon = 0}  \gamma_E (\varepsilon)^{-1} e_{\gamma (\varepsilon)}.
\end{equation}
Additionally, the symbol of
$\left.\frac{d}{d\varepsilon}\right|_{\varepsilon = 0} \gamma_E
(\varepsilon)$ is $\left.\frac{d}{d\varepsilon}\right|_{\varepsilon = 0}
\gamma (\varepsilon)$. Every derivation at $x$ arises in this way.
\color{black}

Derivations of $E$ at $x$ form a vector space denoted $\der_x E$.  If
we let $x$ vary, we get a vector bundle $\der E \to M$. Sections of
$\der E$ identify with \emph{derivations} of $E$, i.e.~$\mathbb
R$-linear operators $ \Delta: \Gamma(E) \to \Gamma (E) $ such that
\begin{equation}
  \Delta (fe)
  =
  X(f)e + f\Delta(e)
\end{equation}
for all $f \in \Cinfty (M)$, $e \in \Gamma (E)$, and some (necessarily
unique) vector field $X \in \mathfrak X (M)$.  We call $X$ the
\emph{symbol} of $\Delta$ and denote it by $\sigma(\Delta)$ or
$\sigma_\Delta$. In the literature, derivations of a vector bundle are
referred to with different names, including \emph{der-operators}
\cite{Vinogradov2009} and \emph{covariant differential operators}
\cite{Mackenzie2005}. From now on we identify sections of $\der E\to
M$ and derivations of $E$ in the obvious way. In particular, we denote
by $\Gamma(\der E)$ the space of derivations of $E$.

The vector bundle $\der E$ carries a structure of Lie algebroid: the
\emph{Atiyah} or \emph{gauge algebroid of $E$}. The Lie bracket of two
derivations is the commutator and the anchor is the symbol $\sigma :
\der E \to TM$, $\Delta \mapsto \sigma(\Delta)$.  Notice that a
\emph{representation} of a Lie algebroid $A \to M$ can be regarded as
a vector bundle $E \to M$ equipped with a Lie algebroid morphism $A
\to \der E$.
The kernel of the symbol $\sigma : \der E \to TM$ is the endomorphism
bundle of $E$. Accordingly, there is a short exact
sequence of vector bundles:
\begin{equation}
  0 
  \longrightarrow 
  \End E 
  \longrightarrow 
  \der E 
  \overset{\sigma}{\longrightarrow} 
  TM 
  \longrightarrow 
  0.
\end{equation}

The correspondence $E \mapsto \der E$ is functorial.  To see this,
first recall that a vector bundle morphism is called \emph{regular} if
it is an isomorphism on fibers.  A regular vector bundle morphism
$\phi_E : E_N \to E$ covering a smooth map $\phi : N \to M$ allows us
to \emph{pull-back} sections of $E$ to sections of
$E_N$. Specifically, let $e \in \Gamma(E)$.  Its pull-back along
$\phi_E$ is the section $\phi_E^\ast e \in \Gamma(E_N)$ defined by
\begin{equation}
  (\phi_E^\ast e)_x 
  = 
  \phi_E |_{(E_N)_x}^{-1} (e_{\phi (x)}),
\end{equation}
for all $x\in N$.  Notice that we can use a regular vector bundle
morphism $\phi_E : E_N \to E$ covering $\phi : N \to M$ to identify
$E_N$ with the pull-back bundle $\phi^\ast E = N \times_M E$ in a
canonical way. In the following, we always make use of this
identification. In particular, we denote $\phi^\ast E = E_N$, and
denote by $\phi_E : \phi^\ast E \to E$ the canonical map. Sometimes,
if there is no risk of confusion, we also denote simply by $\phi^\ast
e$ the pull-back section $\phi^\ast_E e$. Now, let $E \to M$ and $E_N
\to N$ be vector bundles, and let $\phi_E : E_N \to E$ be a
\emph{regular} vector bundle morphism, covering the smooth map $\phi :
N \to M$.  Morphism $\phi_E$ determines a (generically non regular)
vector bundle morphism
\begin{equation}
  \der \phi_E 
  : 
  \der E_N  \to \der E,
\end{equation}
covering $\phi : N \to M$ by
\begin{equation}
  \der \phi_E (\Delta) (e) 
  = 
  (\phi_E \circ \Delta) (\phi^\ast e)
\end{equation}
for all $\Delta \in \der E_N $ and $e \in
\Gamma(E)$. 
\color{black}
If 
\begin{equation}
\Delta = \left.\frac{d}{d\varepsilon}\right|_{\varepsilon = 0} \gamma_{E} (\varepsilon)
\end{equation}
for some curve $\varepsilon \mapsto \gamma_E (\varepsilon)$ of fiber
isomorphisms, then
\begin{equation}\label{eq:der_phi_vel}
  \der \phi_E (\Delta)
  = 
  \left.\frac{d}{d\varepsilon}\right|_{\varepsilon = 0} \phi_E \circ \gamma_{E} (\varepsilon) \circ \phi_E|_{(E_N)_x}^{-1},
\end{equation}
where $x$ is the base point of $\Delta$, i.e.~$\Delta \in \der_x E_N$.
\color{black}

Clearly, diagram
\begin{equation}
  \label{diag:seq_morph_DO}
  \begin{gathered}
       \begin{tikzcd}
      0 \arrow[r] &\End E_N\arrow[r]\ar[d, "\End \phi"] & \der E_N\arrow[r, "\sigma"]\arrow[d, "\der \phi_E"]	& TN\arrow[d, "d\phi"]\arrow[r] &0\\
      0\arrow[r] 
      & \End E\arrow[r]  
      & \der E \arrow[r, "\sigma"]  
      & TM \arrow[r]& 0
    \end{tikzcd}
  \end{gathered}
\end{equation}
commutes. Here $\End \phi = \phi_{\End E} : \End E \to \End E$ is the
obvious regular vector bundle morphism (covering $\phi$). Even more,
$\der \phi_E : \der E_N \to \der E$ is actually a morphism of Lie
algebroids (covering $\phi$).  We will often denote $\der \phi_E$
simply by $\der \phi $, if there is no risk of confusion.
 
It is easy to see that the rightmost square in the
diagram (\ref{diag:seq_morph_DO}) is a pull-back diagram. In
particular, $\der E_N$ is canonically isomorphic to a pull-back
bundle:
  \begin{equation}
  \der E_N \cong TN \mathbin{{}_{d\phi}\hspace*{-0.1cm}  \hspace*{-0.1cm}\times_\sigma}  \der E.
  \end{equation}
More precisely, $\der E_N$ is the pull-back Lie algebroid of $\der E$
along the smooth map $\phi : N \to M$ (which makes sense, because, as
the anchor $\sigma : \der E \to TM$ is surjective, $\phi$ is
automatically transverse to it).\color{black}

The case when $\phi : N \to M$ is the inclusion of a submanifold
is of a special interest.  In this case $E_N = E|_N$ and $\der \phi :
\der E_N \to \der E$ is an embedding whose image consists of
derivations of $E$ with symbol tangent to $N$.

There is another way to understand derivations of a vector bundle $p:
E \to M$.  They can be regarded as \emph{infinitesimal automorphisms}
of $E$.  A vector field $\tilde X$ on the total space $E$ of the
vector bundle $E \to M$ is an infinitesimal automorphism if it
generates a flow by vector bundle automorphisms.  In particular, if
$\tilde X$ is an infinitesimal automorphism of $E$, then it projects
onto a vector field $p_\ast \tilde X$ on $M$.
\begin{remark}
  \label{rem:linear_VF}
  There are several characterizations of infinitesimal automorphisms of
  the vector bundle $E \to M$ that we now discuss. First of all, let
  $h : \mathbb R \times E \to E$ be the (regular) homogeneity
  structure encoding the vector bundle structure (see
  Remark~\ref{rem:hom_struct}), and let $\mathcal E_E$ be the
  infinitesimal generator of $h_{\operatorname{exp} \lambda}$, i.e.~the
  Euler vector field on $E$. Take $\tilde X$ a vector field on
  $E$. The following conditions are equivalent:
  \begin{enumerate}
  \item $\tilde X$ is an infinitesimal automorphism;
  \item\label{ii} for every fiber-wise linear function $\varphi$ on $E$, the Lie
    derivative $\tilde X (\varphi)$ is fiber-wise linear;
  \item\label{iii} for every fiber-wise constant vector field $Y$ on
    $E$, the commutator $[\tilde X, Y]$ is fiber-wise constant;
  \item $(h_\lambda)_\ast X = X$ for all $\lambda \neq 0$;
  \item $[\mathcal E_E, X] = 0$;
  \item when regarded as a section of $TE \to E$, $\tilde X$ is a
    \emph{linear section} of (the top row) of the double
    vector bundle $(TE \to E; TM \to M)$, i.e.~$\tilde X \in
    \Gamma_\ell (TE, E)$.
  \end{enumerate} 
  Point \refitem{iii} needs some explanations. First of all, there
  is a canonical one-to-one correspondence between fiber-wise linear
  functions on $E$ and sections of the dual vector bundle $E^\ast\to
  M$. Now, let $e$ be a section of $E$, and let
  $e^{\uparrow} \in \mathfrak X (E)$ be its vertical lift,
  i.e.~$e^{\uparrow}$ is the vector field defined by
  \begin{equation}
    e^{\uparrow}_{e_0} 
    := 
    \left. \frac{d}{d\varepsilon}\right|_{\varepsilon=0} \left(e_0 + \varepsilon\, e_{p(e_0)} \right),
  \end{equation}
  for all $e_0 \in E$. Notice that $e^{\uparrow}$ is uniquely
  determined by the identity
  \begin{equation}
    e^{\uparrow} (\varphi) = \langle \varphi, e \rangle
  \end{equation}
  where $\varphi \in \Gamma (E^\ast)$ is also understood as a
  fiber-wise linear function on $E$. A \emph{fiber-wise constant}
  vector field on $E$ is a vector field of the form $e^{\uparrow}$.

  If $\tilde X$ satisfies one of the above equivalent conditions we
  also say that $\tilde X$ is \emph{linear}.

  We conclude this remark noticing that a function $f \in C^\infty
  (E)$ is fiber-wise constant if and only if $h_\lambda^\ast f = f$,
  and it is fiber-wise linear if and only if $h_\lambda^\ast f =
  \lambda \cdot f$ for all $\lambda$. Equivalently, $f$ is fiber-wise
  constant if and only if $\mathcal E_E (f) = 0$, and it is fiber-wise
  linear if and only if $\mathcal E_E (f) = f$.  Similarly, a vector
  field $\tilde X \in \mathfrak X (E)$ is fiber-wise constant if and
  only if $(h_\lambda)_\ast \tilde X = \lambda \cdot \tilde X$,
  and (as already remarked) it is linear if and only if
  $(h_\lambda)_\ast \tilde X = \tilde X$, for all $\lambda \neq
  0$. Equivalently, $\tilde X$ is fiber-wise constant if and only if
  $[\mathcal E_E, \tilde X] = -\tilde X$, and it is linear if and only
  if $[\mathcal E_E,\tilde X] = 0$.
\end{remark}

The following lemma is familiar to experts.
\begin{lemma}
  \label{lem:dual}
  Infinitesimal automorphisms of $E$ correspond bijectively to
  derivations of $E$.  
\end{lemma}  
\begin{proof}
  We report a proof for completeness. Recall that the
  gauge algebroids $\der E$ and $\der E^\ast$ are canonically
  isomorphic. The isomorphism identifies a derivation $\Delta : \Gamma
  (E) \to \Gamma (E)$ with derivation $\Delta^\ast $ of $E^\ast$
  defined by
  \begin{equation}\label{eq:dual_derivation}
    \langle \Delta^\ast \varphi , e \rangle 
    = 
    \sigma (\Delta) \langle \varphi, e \rangle - \langle \varphi, \Delta e \rangle,
  \end{equation}
  for all $\varphi \in \Gamma(E^\ast)$ and $e \in \Gamma(E)$, where
  $\langle -,-\rangle : E^\ast \otimes E \to \mathbb{R}$ is the duality
  pairing.
  Here $\mathbb{R}_M:=M\times\mathbb{R}\to M$ denotes the trivial
  line bundle.  It is easy to see that $\Delta^{\ast\ast} = \Delta$
  for all $\Delta \in \Gamma(\der E)$.  Now, it follows from point
  \refitem{ii} in Remark \ref{rem:linear_VF} that there is a bijection
  between infinitesimal automorphisms and derivations of $E$, mapping
  $\tilde X$ to $\Delta_{\tilde X} := \tilde X|_{\Gamma(E^\ast)}^\ast$
  (where we identify fiber-wise linear functions on $E$ and sections
  of $E^\ast$).
\end{proof}

Let $\tilde X$ and $\Delta_{\tilde X}$ be as in the proof of Lemma
\ref{lem:dual}. Clearly, $\sigma (\Delta_{\tilde X}) =p_\ast \tilde
X$. Additionally, bijection $\tilde X \mapsto \Delta_{\tilde X}$ is
$\Cinfty (M)$-linear and commutator-preserving. It follows that the
flow $\{ \tilde \phi_{\varepsilon} \}_{\varepsilon}$ of $\tilde X$ is
uniquely determined by the following ODE
\begin{equation}
  \Delta e
  = 
  \left. \frac{d}{d\varepsilon} \right|_{\varepsilon = 0} \tilde \phi_{\varepsilon}^\ast e
\end{equation}
In this sense, $\Delta$ generates the flow $\{ \tilde \phi_{\varepsilon}
\}_\varepsilon$ of automorphisms of $E$. We denote by $\Delta \mapsto
X_\Delta$ the inverse correspondence.

\begin{remark}
  \label{rem:vertical_lift}
  There is another way to describe the bijection $\tilde X \mapsto
  \Delta_{\tilde X}$, which is often
  useful. Namely, let $\tilde X$ be an infinitesimal automorphism of
  $E$. Then, it follows from point \textit{iii.)} in Remark
  \ref{rem:linear_VF} that the derivation $\Delta_{\tilde X}$ is
  uniquely determined by
  \begin{equation}
    (\Delta_{\tilde X} e)^{\uparrow} = [\tilde X, e^{\uparrow}]
  \end{equation}
  for all $e \in \Gamma (E)$.
\end{remark}
In view of the Lemma~\ref{lem:dual}, we can make sense of
``compatibility of two derivations with respect to a, non-necessarily regular,
vector bundle morphism''. Namely, let $E_1 \to M_1$ and $E_2 \to M_2$
be vector bundles, and let $\tilde F : E_1 \to E_2$ that be a vector bundle
morphism covering a smooth map $F : M_1 \to M_2$. If $\Delta_1$ and
$\Delta_2$ are derivations of $E_1$ and $E_2$, respectively, we say
that $\Delta_1$ and $\Delta_2$ are \emph{$\tilde F$-related} if
$X_{\Delta_1}$ and $X_{\Delta_2}$ are $\tilde F$-related.  The latter
condition can be actually restated in terms of the sole $\Delta_1,
\Delta_2$. Indeed, the pull-back along $\tilde F$ maps fiber-wise
linear functions on $E_2$ to fiber-wise linear functions on
$E_1$. Accordingly, there is a well-defined map $\tilde F{}^\ast :
\Gamma (E_2^\ast) \to \Gamma (E_1^\ast)$, and $\Delta_1$ and
$\Delta_2$ are $\tilde F$-related if and only if
\begin{gather}
  \label{eq:F_related_derivations_1}
  \Delta_1^\ast \circ \tilde F{}^\ast = \tilde F{}^\ast \circ \Delta_2^\ast,\\
  \label{eq:F_related_derivations_2}
  \sigma(\Delta_1) \circ F{}^\ast = F{}^\ast \circ \sigma(\Delta_2).
\end{gather}
If $\tilde F$ is a surjective submersion,
then~\eqref{eq:F_related_derivations_2} follows directly
from~\eqref{eq:F_related_derivations_1}, $\Delta_1$ completely
determines $\Delta_2$ via \eqref{eq:F_related_derivations_1} and we
write $\Delta_2 = \tilde F_\ast \Delta_1$. In this case
$\sigma(\Delta_2) = F_\ast \sigma (\Delta_1)$.  On another hand, if
$\tilde F : E_1 \to E_2$ is the inclusion of a subbundle (over a
possibly smaller base), $\sigma (\Delta_2)$ is tangent to $M_1$,
$\Delta_2$ completely determines $\Delta_1$ and we write $\Delta_1 =
\Delta_2 |_{E_1}$.  In this case $\sigma (\Delta_1) = \sigma
(\Delta_2)|_{M_1}$.

\subsection{Infinitesimal automorphisms and multiplicative derivations}

Recall that a vector field $X \in \mathfrak X (G)$ on a Lie groupoid
$G \rightrightarrows M$ is \emph{multiplicative} if it generates a
flow by groupoid automorphisms. We begin this section adapting this
notion to the case of VB-groupoids.
Let $(\mathcal W \rightrightarrows E; G \rightrightarrows M)$ be a
VB-groupoid as in \eqref{diag:VB_groupoid}.

\begin{definition}[Infinitesimal automorphisms of VB-groupoids]
  \label{def:inf_auto_VB-group}
  A vector field $X_\mathcal W$ on $\mathcal W$ is said to be an
  \emph{infinitesimal automorphism} of $(\mathcal W \rightrightarrows E; G
  \rightrightarrows M)$ if it generates a flow by VB-groupoid
  automorphisms.
\end{definition}

\begin{example}
  Let $G \rightrightarrows M$ be a Lie groupoid and let $X$ be a
  multiplicative vector field on $G$. Recall that the \emph{tangent
    lift} of $X$ is the vector field $\tilde X$ on $TG$ whose flow is
  the differential $\{ \D \phi_t \}$ of the flow $\{ \phi_t \}$ of $X$
  (see, e.g., \cite{YI1973}). Then, $\tilde X$ is an infinitesimal
  automorphism of the tangent VB-groupoid $(TG\rightrightarrows TM;
  G\rightrightarrows M)$.
\end{example}

\begin{remark}
  \label{rem:inf_automorphism}
  Let $X_\mathcal W \in \mathfrak X(\mathcal W)$ be an infinitesimal
  automorphism of $(\mathcal W \rightrightarrows E; G \rightrightarrows
  M)$.
  It immediately follows from the definition that $X_\mathcal W$ projects
  onto vector fields
  \begin{itemize}
  \item $X_G = (q_\mathcal W)_\ast X_\mathcal W$ on $G$,
  \item $X_E = \tilde s_\ast X_\mathcal W = \tilde t_\ast X_\mathcal W$ on $E$,
  \item $X = s_\ast X_G = t_\ast X_G = q_\ast X_E$ on $M$. 
  \end{itemize}
  Again from the definition, $X_\mathcal W$ is tangent to the core
  $C$, and we denote by $X_C = X_\mathcal W |_C$ the restriction. We
  also have that $X_C$ projects onto $X$. Additionally, $X_G$ is a
  multiplicative vector field on $G$ and $X_E$ (resp.~$X_C$) is an
  infinitesimal automorphism of the vector bundle $E \to M$ (resp.~$C
  \to M$).  From another point of view, $X_\mathcal W$ corresponds to
  a derivation $\Delta_\mathcal W$ of the vector bundle $\mathcal W
  \to G$, $X_E$ corresponds to a derivation $\Delta_E$ of the vector
  bundle $E \to M$, and $X_C$ corresponds to a derivation $\Delta_C$
  of the vector bundle $C \to M$.  Finally $\Delta_\mathcal W$ and
  $\Delta_E$ are both $\tilde s$ and $\tilde t$-related: $\Delta_E =
  \tilde s_\ast \Delta_\mathcal W = \tilde t_\ast \Delta_\mathcal
  W$. Similarly, $\Delta_C = \Delta_\mathcal W |_C$.
\end{remark}
\begin{definition}[Multiplicative derivations]
  \label{def:mult_der}
  A derivation $\Delta_\mathcal W$ of $\mathcal W \to G$ is
  \emph{multiplicative} if the corresponding linear vector field
  $X_{\Delta_\mathcal W} \in \mathfrak X(\mathcal W)$ is an infinitesimal
  automorphism of the VB-groupoid $(\mathcal W \rightrightarrows E; G
  \rightrightarrows M)$.
\end{definition}
The map $\Delta_\mathcal W \mapsto
X_{\Delta_\mathcal W}$ establishes a one-to-one correspondence between
 multiplicative derivations and infinitesimal automorphisms of $(\mathcal W
\rightrightarrows E; G \rightrightarrows M)$. Consider
 a multiplicative derivation $\Delta_\mathcal W$ of $\mathcal W \to G$.  It
follows from Remark \ref{rem:inf_automorphism} that
\begin{itemize}
\item there is a derivation $\Delta_E$ of $E \to M$ such that
  $\Delta_E = \tilde s_\ast \Delta_\mathcal W = \tilde t_\ast
  \Delta_\mathcal W$,
\item there is a derivation $\Delta_C$ of $C \to M$ such that
  $\Delta_C = \Delta_\mathcal W |_C$,
\item the symbol of $\Delta_\mathcal W$ is a multiplicative vector field
  on $G$ projecting on the symbols of both $\Delta_E$ and $\Delta_C$
  (via both $s$ and $t$).
\end{itemize}
Let $(\mathcal W \rightrightarrows E; G \rightrightarrows M)$ be a
VB-groupoid with VB-algebroid $(W \Rightarrow E; A \Rightarrow M)$.
Take a linear section $\tilde a : E \to W$ covering a section $a$ of
$A \to M$, and let $\overrightarrow{\tilde a}$ and
$\overleftarrow{\tilde a}$ be the associated right and left invariant
vector fields on $\mathcal W$.

\begin{proposition}
  \label{prop:internal} 
  Both $\overrightarrow{\tilde a}, \overleftarrow{\tilde a}$ are
  linear vector fields on $\mathcal W$.
\end{proposition}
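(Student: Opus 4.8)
The plan is to exploit the homogeneity-structure characterization of linearity from Remark~\ref{rem:linear_vf_hom}. Let $h\colon \mathbb R \times \Omega \to \Omega$ be the regular homogeneity structure encoding the vector bundle $\Omega \to G$, as in Remark~\ref{rem:hom_struct}, and let $\mathcal E_\Omega$ be the associated Euler vector field. By definition of a VB-groupoid, each $h_t\colon \Omega \to \Omega$ is a morphism of the Lie groupoid $\Omega \rightrightarrows E$; moreover, since $(\tilde s, s)$ is a morphism of vector bundles, $h_t$ covers the homogeneity structure $h^E_t$ of $E \to M$ (fiberwise scalar multiplication by $t$). By Remark~\ref{rem:linear_vf_hom} it suffices to show that $\overrightarrow{\tilde a}$ and $\overleftarrow{\tilde a}$ are preserved by the flow $\{h_{\exp\tau}\}_{\tau \in \mathbb R}$ of $\mathcal E_\Omega$, equivalently that $[\mathcal E_\Omega, \overrightarrow{\tilde a}] = [\mathcal E_\Omega, \overleftarrow{\tilde a}] = 0$. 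Passing to $h_{\exp\tau}$ rather than arbitrary $h_t$ has the advantage that every $h_{\exp\tau}$ is a groupoid \emph{automorphism}, so the degenerate case $t=0$ is avoided.

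First I would record how the Lie functor interacts with $h$. Applying $\mathsf{Lie}$ to the groupoid automorphisms $h_t$ yields Lie algebroid automorphisms $\mathsf{Lie}(h_t)\colon W \to W$ covering $h^E_t$, and by functoriality these assemble into a homogeneity structure on $W$; by the compatibility of the Lie functor with the vector bundle structures recalled in Section~\ref{sec:diff_int}, this is precisely the homogeneity structure $H$ encoding the VB-algebroid bundle $W \to A$. Thus $H_t = \mathsf{Lie}(h_t)$ is fiberwise scalar multiplication by $t$ in $W \to A$.

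The key step is the naturality of right/left invariant vector fields under groupoid automorphisms: for any automorphism $\Phi$ of $\Omega \rightrightarrows E$ covering a diffeomorphism $\phi$ of $E$, and any $\alpha \in \Gamma(W, E)$, one has $\Phi_\ast \overrightarrow{\alpha} = \overrightarrow{\mathsf{Lie}(\Phi)\circ \alpha \circ \phi^{-1}}$, and similarly for $\overleftarrow{\alpha}$. This follows from $\mathsf{Lie}(\Phi) = d\Phi|_W$ together with the intertwining of right translations $\Phi \circ R_g = R_{\Phi(g)}\circ \Phi$, which I would spell out at a single arrow $g$. Taking $\Phi = h_{\exp\tau}$, $\phi = h^E_{\exp\tau}$ and $\alpha = \tilde a$ gives $(h_{\exp\tau})_\ast \overrightarrow{\tilde a} = \overrightarrow{H_{\exp\tau}\circ \tilde a \circ (h^E_{\exp\tau})^{-1}}$. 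Finally, the hypothesis that $\tilde a$ is a \emph{linear} section means exactly that $(\tilde a, a)$ is a morphism of vector bundles from $E \to M$ to $W \to A$, i.e.\ $\tilde a$ is equivariant for the homogeneity structures: $H_t \circ \tilde a = \tilde a \circ h^E_t$. Substituting, $H_{\exp\tau}\circ \tilde a \circ (h^E_{\exp\tau})^{-1} = \tilde a$, whence $(h_{\exp\tau})_\ast \overrightarrow{\tilde a} = \overrightarrow{\tilde a}$ for all $\tau$; the same computation applies verbatim to $\overleftarrow{\tilde a}$. By Remark~\ref{rem:linear_vf_hom} both vector fields are linear.

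I expect the main obstacle to be the bookkeeping in the naturality formula together with the identification $H_t = \mathsf{Lie}(h_t)$: one must check carefully that differentiating the groupoid automorphism $h_t$ along the units reproduces fiberwise multiplication by $t$ on $W \to A$, and that the invariant-vector-field construction is genuinely functorial in the groupoid automorphism, fixing the source/target and left/right conventions. Both are standard but deserve an explicit verification at one arrow. As an alternative that avoids the explicit naturality formula, one could argue purely infinitesimally: $\mathcal E_\Omega$ is a multiplicative vector field on $\Omega \rightrightarrows E$ (being the generator of a flow of groupoid automorphisms), its associated Lie algebroid derivation is the Euler derivation of $W \to A$, and the bracket identity $[\mathcal E_\Omega, \overrightarrow{\tilde a}] = \overrightarrow{\delta(\tilde a)}$ reduces the claim to $\delta(\tilde a) = 0$, which holds precisely because $\tilde a$ is linear.
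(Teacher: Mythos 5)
Your proposal is correct and follows essentially the same route as the paper: both verify, via Remark~\ref{rem:linear_vf_hom}, that $(h_t)_\ast\overrightarrow{\tilde a}=\overrightarrow{\tilde a}$ for $t>0$ by combining the fact that $h_t$ is a groupoid morphism (hence intertwines right/left translations) with the equivariance $dh_t(\tilde a_{t^{-1}e})=\tilde a_e$ expressing linearity of $\tilde a$. The paper simply carries out this computation pointwise at a single arrow $\omega$ rather than packaging it as a general naturality formula for invariant vector fields under groupoid automorphisms, but the content is identical.
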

\begin{proof}
  Consider $\overrightarrow{\tilde a}$ first.  According to
  Remark~\ref{rem:linear_VF}, point \textit{iv.)}, it is enough to
  show that $(h_\lambda)_\ast \overrightarrow{\tilde
    a}=\overrightarrow{\tilde a}$, for all $\lambda \neq 0$.  So let
  $\omega\in\mathcal W$, with $\tilde t(\omega)=e$, and compute
  \begin{equation}
    ({h_\lambda}_\ast\overrightarrow{\tilde a})_\omega
    =
    \D h_\lambda(\D R_{\lambda^{-1}\omega}(\tilde a_{\lambda^{-1}e}))
    =
    \D R_{\omega}(dh_\lambda(\tilde a_{\lambda^{-1}e}))
    =
    \D R_{\omega}(\tilde a_{e})
    =
    \overrightarrow{\tilde a}_\omega,
  \end{equation}
  where $R$ denotes right translations in $\mathcal W$, and we used the
  linearity of $\tilde m$ and $\tilde a$ in the form of the identities
  $h_\lambda\circ R_{\lambda^{-1}\omega}=R_\omega\circ h_\lambda$ and $dh_\lambda(\tilde
  a_{\lambda^{-1}e})=\tilde a_e$ respectively.  A similar argument (with
  left translations in place of right translations) shows that
  ${h_\lambda}_\ast\overleftarrow{\tilde a}=\overleftarrow{\tilde a}$, for
  all $\lambda \neq 0$, i.e.~$\overleftarrow{\tilde a}$ is a linear vector field
  as well.
\end{proof}

\begin{remark}  
  \label{rem:internal}
  It follows from Proposition~\ref{prop:internal} that
  $\overrightarrow{\tilde a}$ and $\overleftarrow{\tilde a}$
  correspond to derivations of $\mathcal W \to G$, denoted
  $\overrightarrow{\Delta}_{\tilde a}$ and
  $\overleftarrow{\Delta}_{\tilde a}$, respectively.  Moreover,
  $\overrightarrow{\tilde a} + \overleftarrow{\tilde a}$ is a linear
  vector field which is additionally multiplicative.  Hence it
  corresponds to a multiplicative derivation of $\mathcal W$, namely
  $\overrightarrow{\Delta}_{\tilde a} + \overleftarrow{\Delta}_{\tilde
    a}$, also denoted $\delta \tilde a$, and called an \emph{internal
    derivation} of $(\mathcal W\rightrightarrows E;G\rightrightarrows M)$.
  By duality, $\overrightarrow{\tilde a} + \overleftarrow{\tilde a}$
  does also correspond to a multiplicative derivation of
  $\mathcal W^\ast$.  Similarly, a linear section of $W^\ast \to C^\ast$
  determines a multiplicative derivation of both $\mathcal W$ and
  $\mathcal W^\ast$.
\end{remark}

\begin{remark}
  \label{rem:mult_vf}
  There is an important characterization of multiplicative vector
  fields on a Lie groupoid $G \rightrightarrows M$. Namely, a vector
  field $X \in \mathfrak X (G)$ is multiplicative if and only if, when
  regarded as a section $X : G \to TG$ of the tangent VB-groupoid, it
  is a morphism of Lie groupoids~\cite{Mackenzie1997}. This suggests
  the following definition. Given a VB-groupoid $(\mathcal W
  \rightrightarrows E; G \rightrightarrows M)$, a section $w$ of
  $\mathcal W \to G$ is \emph{multiplicative} if it is a Lie groupoid
  morphism. So, a vector field $X_\mathcal W \in \mathfrak X (\mathcal
  W)$ is an infinitesimal automorphism of $(\mathcal W
  \rightrightarrows E; G \rightrightarrows M)$ if and only if the
  following conditions are satisfied:
  \begin{enumerate}
   \item it is a linear vector field with respect to the vector bundle
     structure $\mathcal W \to G$,
  \item when regarded as a section $X_\mathcal W$ of $T \mathcal W \to
    \mathcal W$, it is a multiplicative section of the (tangent)
    VB-groupoid $(T\mathcal W\rightrightarrows TE; \mathcal
    W\rightrightarrows E)$.
  \end{enumerate}
  Yet in other words, infinitesimal automorphisms of $(\mathcal W
  \rightrightarrows E; G \rightrightarrows M)$ are the same as
  morphisms between the VB-groupoids $(\mathcal W \rightrightarrows E;
  G \rightrightarrows M)$ and $(T\mathcal W \rightrightarrows TE;
  TG\rightrightarrows TM)$ inverting the projection $T\mathcal W \to
  \mathcal W$ on the right.
 \end{remark}
 
 \begin{remark}\label{rem:mult_der}
  A similar characterization as for multiplicative vector fields (and
  infinitesimal automorphism of VB-groupoids, see Remark
  \ref{rem:mult_vf}) cannot take place, in general, for multiplicative
  derivations. The reason is that the gauge algebroid $\der \mathcal
  W$ is \emph{not} a VB-groupoid itself, if $(\mathcal W
  \rightrightarrows E; G \rightrightarrows M)$ is a \emph{generic}
  VB-groupoid. In Section \ref{sec:trivial-core} we discuss two simple
  but interesting classes of VB-groupoids, those with either trivial
  core or trivial side vector bundle $0_M \to M$ (we call the latter
  \emph{full-core} VB-groupoids). Trivial-core VB-groupoids
  $(E_G\rightrightarrows E; G\rightrightarrows M)$ come from
  representations $E$ of Lie groupoids $G$ and possess the nice
  property that their gauge algebroid $\der E_G$ fits in a VB-groupoid
  $(\der E_G\rightrightarrows \der E; G\rightrightarrows M)$
  itself. It turns out that multiplicative derivations of a
  trivial-core VB-groupoid $(E_G\rightrightarrows E;
  G\rightrightarrows M)$ are the same as multiplicative sections of
  $\der E_G \to G$ (see Theorem \ref{theor:der_VB-grpd}). As duality
  establishes a one-to-one correspondence between trivial-core and
  full-core VB-groupoids, similar considerations hold true for
  multiplicative derivation of full-core VB-groupoids.
\end{remark}

\begin{remark}\label{rem:answer}
  We already have a simple and conceptually satisfactory description
  of infinitesimal automorphisms of a VB-groupoid $(\mathcal W
  \rightrightarrows E; G \rightrightarrows M)$ as morphisms between
  the VB-groupoids $(\mathcal W \rightrightarrows E; G
  \rightrightarrows M)$ and $(T \mathcal W\rightrightarrows TE;
  TG\rightrightarrows TM)$ (Remark \ref{rem:mult_vf}), so the reader
  may wonder: \emph{why do we care so much about multiplicative
    derivations} (Remark \ref{rem:mult_der})?  There are at least two
  reasons for that. First of all, the target space $T \mathcal W$ of
  an infinitesimal automorphism is an example of a triple structure:
  it possesses a groupoid structure $T \mathcal W \rightrightarrows
  TE$, and two compatible vector bundle structures $T\mathcal W \to
  \mathcal W$ and $T\mathcal W \to TG$. All these structures play a
  role when dealing with infinitesimal automorphisms of $(\mathcal W
  \rightrightarrows E; G \rightrightarrows M)$. This could potentially
  lead to complications, especially in view of the fact that a
  detailed theory of triple structures like $T\mathcal W$ is still
  unavailable (and developing it would take us too far from our
  original scopes). On the other hand, derivations provide a simpler,
  more algebraic, and somehow minimal description of the same
  objects. Second, and, probably, more important, the reader should
  remember that this paper represents just the first step in a program
  aiming at studying (higher order) DOs on groupoids and
  algebroids. In this respect, derivations are an instance of first
  order DOs. Now, derivations can be seen as linear vector fields, but
  there is no similar description for higher order DOs.  So working
  with derivations directly is a better training to learn how to work
  with generic DOs.
\end{remark}

We now pass to VB-algebroids. First recall that a vector field $X \in
\mathfrak X (A)$ on a Lie algebroid $A \Rightarrow M$ is
\emph{infinitesimally multiplicative} (IM) if it generates a flow by
Lie algebroid automorphisms.  Let $(W\Rightarrow E;A \Rightarrow M)$
be a VB-algebroid.
As in \eqref{diag:VB_algebroid}, we denote by $\tilde p : W \to E$,
and $p : A \to M$ the projections.

\begin{definition}[Infinitesimal automorphisms of VB-algebroids]
  \label{def:inf_auto_VB-alg}
  A vector field $X_W$ on $W$ is an \emph{infinitesimal automorphism}
  of $(W\Rightarrow E; A\Rightarrow M)$ if it generates a flow by
  VB-algebroid automorphisms.
\end{definition}

\begin{remark}
  \label{rem:inf_automorphism_alg}
  Let $X_W \in \mathfrak X(W)$ be an infinitesimal automorphism of $(W
  \Rightarrow E; A \Rightarrow M)$. It immediately follows from the
  definition that $X_W$ projects onto vector fields
  \begin{itemize}
  \item $X_A = (q_W)_\ast X_W$ on $A$,
  \item $X_E = \tilde p_\ast X_W$ on $E$,
  \item $X = p_\ast X_A = q_\ast X_E$ on $M$.
  \end{itemize}
  Again from the definition, $X_W$ is tangent to the core $C$ and we
  denote by $X_C = X_W|_C$ the restriction.  We also have that $X_C$
  projects onto $X$.  Additionally, $X_A$ is an IM vector field on
  $A$, and $X_E$ (resp.~$X_C$) is an infinitesimal automorphism of the
  vector bundle $E \to M$ (resp.~$C \to M$).  From another point of
  view, $X_W$ corresponds both to a derivation $\Delta_{W, A}$ of the
  vector bundle $W \to A$, and a derivation $\Delta_{W,E}$ of $W \to
  E$. In the same way, $X_E$ corresponds to a derivation $\Delta_E$ of
  the vector bundle $E \to M$, $X_C$ corresponds to a derivation
  $\Delta_C$ of the vector bundle $C \to M$, and $X_A$ corresponds to
  a derivation $\Delta_A$ of $A$ (sharing the same symbol as
  $\Delta_E$ and $\Delta_C$).  Finally $\Delta_{W,A}$ and $\Delta_E$
  are $\tilde p$-related, $\Delta_C$ agrees with the restrictions of
  both $\Delta_{W,A}$ and $\Delta_{W,E}$ to $C \to M$, and
  $\Delta_{W,E}$ and $\Delta_A$ are $q_W$-related: $\Delta_E = \tilde
  p_\ast \Delta_{W,A}$, $\Delta_C = \Delta_{W,A}|_C = \Delta_{W,E}|_C$
  and $\Delta_A = (q_W)_\ast \Delta_{W,E}$.  We call $\Delta_{W, A}$
  the \emph{vertical derivation} and $\Delta_{W, E}$ the
  \emph{horizontal derivation} corresponding to $X_W$.
  In the following we provide a \emph{more algebraic} description of
  both $\Delta_E$ and $\Delta_C$ in terms of $\Delta_{W,E}$ (see
  Theorem \ref{theor:delta}).
\end{remark}
\begin{definition}[IM derivations]
  \label{def:IM_der}
  A derivation $\Delta_{W,A}$ of $W \to A$ is an \emph{IM derivation}
  if the corresponding linear vector field $X_{W} \in \mathfrak X(W)$
  is an infinitesimal automorphism of the VB-algebroid $(W \Rightarrow
  E; A \Rightarrow M)$.
\end{definition}
Recall from \cite{Mackenzie1997} that if $A \Rightarrow M$ is a Lie
algebroid, a \emph{Lie algebroid derivation} of $A \Rightarrow M$ is a
derivation $\Delta$ of $A \to M$ such that
\begin{equation}
  \label{eq:0-diff}
  \Delta [a,b] 
  = 
  [\Delta a, b] + [a, \Delta b],
\end{equation}
for all $a,b \in \Gamma (A)$. It follows from \eqref{eq:0-diff}
that $\Delta$ is also compatible with the anchor, i.e.
\begin{equation}
  \label{eq:RhoDelta}
  \rho (\Delta a) 
  = 
  [\sigma (\Delta), \rho (a)],
\end{equation}
for all $a \in \Gamma (A)$. The commutator of two Lie algebroid
derivations is clearly a Lie algebroid derivation as well. Now, the
usual map $X \mapsto \Delta_X$ establishes a one-to-one,
commutator-preserving, correspondence between IM vector fields on $A$
and Lie algebroid derivations of $A$ (see~\cite{Mackenzie1997}). So,
Lie algebroid derivations provide an algebraic description of IM
vector fields.  Notice that Lie algebroid derivations of $A$ are the
same as $1$-differentials of $A$ (cf.~\cite{Iglesias-Ponte2012}).

Let $(W \Rightarrow E; A \Rightarrow M)$ be a VB-algebroid. Take
a linear section $\tilde a$ of $W \to E$ and consider the adjoint
operator $[\tilde a, -]$. It is a derivation of $W \to
E$. Additionally, it is a Lie algebroid derivation. Thus, from the
above discussion it follows that $X_{[\tilde a, -]}$ is an
infinitesimal automorphism of $W \Rightarrow E$.

\begin{proposition}
  \label{prop:internal_alg}
  A vector field $X_{[\tilde a, -]}$ is an infinitesimal automorphism of
  the VB-algebroid \linebreak $(W \Rightarrow E; A \Rightarrow M)$.
  \end{proposition}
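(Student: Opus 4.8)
The plan is to show that the flow of $X_{[\tilde a,-]}$ consists of VB-algebroid automorphisms by checking two properties separately: that each time-$\epsilon$ map is a Lie algebroid automorphism of $W \to E$, and that it is simultaneously a vector bundle automorphism of $W \to A$. The first property is already granted by the discussion preceding the statement: since $[\tilde a,-]$ is a Lie algebroid derivation of $W \to E$, the correspondence of \cite{Mackenzie1997} between Lie algebroid derivations and IM vector fields ensures that $X_{[\tilde a,-]}$ is an infinitesimal automorphism of the Lie algebroid $W \to E$. Hence the only thing left to prove is that $X_{[\tilde a,-]}$ is a linear vector field for the \emph{second} vector bundle structure $W \to A$; this is the algebroid counterpart of Proposition~\ref{prop:internal} and is the main point.

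For this second part I would argue along the lines of Remark~\ref{rem:linear_vf_hom}, using the homogeneity structure $h$ of $W \to A$, that is $h_t = t\cdot_{W,A}$. By Remark~\ref{rem:hom_struct}, $h_t$ is, for every $t$, a Lie algebroid morphism of $W \to E$, and for $t>0$ it is moreover a regular vector bundle automorphism of $W \to E$ covering the diffeomorphism $t\cdot_E$ of $E$. It therefore induces a push-forward $(h_t)_\ast$ on sections of $W \to E$, and, being a Lie algebroid automorphism, it conjugates the adjoint derivation $[\tilde a,-]$ into $[(h_t)_\ast\tilde a,-]$. The key observation is that $\tilde a$ is $h_t$-invariant, $(h_t)_\ast\tilde a = \tilde a$: indeed, by Remark~\ref{rem:linear_sections} the pair $(\tilde a,a)$ is a vector bundle morphism from $E \to M$ to $W \to A$, so $\tilde a$ intertwines the two homogeneity structures, $h_t\circ\tilde a = \tilde a\circ(t\cdot_E)$, whence $h_t(\tilde a_{t^{-1}e}) = \tilde a_e$ for all $e$ and all $t>0$.

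Combining these, for every $t>0$ and every section $\eta$ of $W \to E$ one gets
\[
(h_t)_\ast [\tilde a, \eta] = [(h_t)_\ast \tilde a, (h_t)_\ast \eta] = [\tilde a, (h_t)_\ast \eta],
\]
which says precisely that the derivation $[\tilde a,-]$ of $W \to E$ is $h_t$-related to itself. By the very definition of $\tilde F$-relatedness of derivations (in terms of the associated linear vector fields, cf.~\eqref{eq:F_related_derivations_1}), this is equivalent to $X_{[\tilde a,-]}$ being $h_t$-related to itself, i.e.~$(h_t)_\ast X_{[\tilde a,-]} = X_{[\tilde a,-]}$ for all $t>0$. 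By Remark~\ref{rem:linear_vf_hom} this means exactly that $X_{[\tilde a,-]}$ is a linear vector field, hence an infinitesimal automorphism of the vector bundle $W \to A$.

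Finally I would assemble the two pieces. Writing $\{\phi_\epsilon\}$ for the flow of $X_{[\tilde a,-]}$, the first part gives that each $\phi_\epsilon$ is a Lie algebroid automorphism of $W \to E$, while the invariance $(h_t)_\ast X_{[\tilde a,-]} = X_{[\tilde a,-]}$ forces the flow to commute with $h_t$, so that each $\phi_\epsilon$ is a vector bundle automorphism of $W \to A$. Since a self-map of $W$ is a VB-algebroid automorphism precisely when it is at the same time an automorphism of the Lie algebroid $W \to E$ and of the vector bundle $W \to A$, it follows that every $\phi_\epsilon$ is a VB-algebroid automorphism, and therefore $X_{[\tilde a,-]}$ is an infinitesimal automorphism of $(W,E;A,M)$. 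The only mildly delicate point I anticipate is promoting the commutation $\phi_\epsilon\circ h_t = h_t\circ\phi_\epsilon$ from $t>0$ to the value $t=0$; I expect to dispose of it by smoothness of $h$ and of the flow in the scaling parameter, rather than by a separate computation.
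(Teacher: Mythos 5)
Your proof is correct, but it takes a genuinely different route from the paper's. The paper works infinitesimally: it reduces linearity of $X_{[\tilde a,-]}$ over $W \to A$ to the vanishing of the commutator of the operator $[\tilde a,-]$ with the Euler derivation $\Delta_{\mathcal E}$ of $W \to E$, and checks that this commutator has zero symbol (since $[\tilde\rho(\tilde a),\mathcal E_E]=0$) and kills the generators, using Lemma~\ref{lem:linear_Euler} together with the linearity conditions of Definition~\ref{def:VBAlgebroid}. You instead work with the finite homogeneity structure $h_t = t\cdot_{W,A}$: you observe that $h_t$ is a Lie algebroid automorphism of $W \to E$ for $t>0$, that the linear section $\tilde a$ is $h_t$-invariant because $(\tilde a,a)$ intertwines the two scalar multiplications, and hence that conjugation by $h_t$ fixes $[\tilde a,-]$, which gives $(h_t)_\ast X_{[\tilde a,-]} = X_{[\tilde a,-]}$ and linearity via Remark~\ref{rem:linear_vf_hom}. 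Your argument is the exact algebroid analogue of the paper's own proof of Proposition~\ref{prop:internal} for the groupoid case, and it is in some ways leaner: it needs only the ``Lie algebroid object in vector bundles'' formulation (that $h_t$ is an algebroid morphism) and bypasses both Lemma~\ref{lem:linear_Euler} and the explicit bracket conditions \textit{ii.)}--\textit{iii.)} of Definition~\ref{def:VBAlgebroid}. What the paper's route buys in exchange is that the Euler-derivation machinery ($\Delta_{\mathcal E}$ and Lemma~\ref{lem:linear_Euler}) is set up here precisely because it is reused throughout Section~\ref{sec:linear_def} (Lemma~\ref{lem:linear_Euler_extended}, Corollary~\ref{cor:c}, Theorem~\ref{theor:delta}). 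Your closing remark about extending the commutation with $h_t$ from $t>0$ to $t=0$ is handled correctly by smoothness, and matches the paper's own practice in the proof of Proposition~\ref{prop:internal}, which also only verifies invariance for $t>0$.
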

Before proving Proposition~\ref{prop:internal_alg} we need some
general remarks which will be useful in the next
Section~\ref{sec:linear_def} as well.  Let $\mathcal E_{W,A}$ and
$\mathcal E_{W, E}$ be the Euler vector fields of vector bundles $W
\to A$ and $W \to E$, respectively.  As $W$ is a double vector bundle,
$\mathcal E_{W,A}$ and $\mathcal E_{W, E}$ commute.  In particular, it
follows from Remark~\ref{rem:linear_VF}, point \textit{v.)}, that
$\mathcal E_{W,A}$ is an infinitesimal automorphism of the vector
bundle $W \to E$.  Hence, it corresponds to a derivation
$\Delta_{\mathcal E}$ of $W \to E$.  The symbol of $\Delta_{\mathcal
  E}$ is $\tilde{p}_\ast \mathcal{E}_{W,A}$ and coincides with
$\mathcal E_E$, the Euler vector field of $E$.  The following lemma
parallels the characterization of fiber-wise constant (resp.~linear)
functions and vector fields on a vector bundle in terms of the Euler
vector field (as discussed in Remark \ref{rem:linear_VF}) and can be
easily proved in local coordinates adapted to the double vector bundle
structure of $W$ (see e.g. \cite{GR2009,gracia2010lie}).
\begin{lemma}
  \label{lem:linear_Euler}
  A section $w$ of $W \to E$ is a core section if and only if
  $\Delta_{\mathcal E } w = -w$ and it is linear if and only if
  $\Delta_{\mathcal E} w = 0$. \color{black} Additionally, there are no
  non-trivial sections $w$ of $W \to E$ such that $\Delta_{\mathcal E}
  w = - i w$ for some integer $i > 1$.  \color{black}
 \end{lemma}

\begin{proof}[of Proposition \ref{prop:internal_alg}]
  As the vector field $X_{[\tilde a, -]}$ is already known to be an
  infinitesimal automorphism of the Lie algebroid $W \Rightarrow E$,
  it remains to check that it is linear with respect to the vector
  bundle structure $W \to A$. Now a vector field on $W$ is linear with
  respect to $W \to A$ if and only if it commutes with $\mathcal
  E_{W,A}$ (cf.~Remark~\ref{rem:linear_VF}, point \textit{v.)}).  In
  order to see that $X_{[\tilde a, -]}$ commutes with $\mathcal
  E_{W,A}$ it is enough to check that the operator $[\tilde a, -]$
  commutes with $\Delta_{\mathcal E}$ (cf.~Lemma~\ref{lem:dual}). In
  other words we have to show that the commutator $\square$ of
  $[\tilde a, -]$ and $\Delta_{\mathcal E}$ vanishes. To do this it is
  enough to check that
  \begin{enumerate}
  \item the symbol of $\square$ vanishes,
  \item $\square$ vanishes on generators (over $C^\infty (E)$).
  \end{enumerate}
  Now, for any linear section $\tilde a$ of $W \to E$, the symbol of
  $[\tilde a, -]$ is $\tilde \rho (\tilde a)$ and it is a linear
  vector field on $E$. On the other hand the symbol of $\Delta$ is the
  Euler vector field $\mathcal E_E$ of $E$. So $\sigma (\square) =
  [\tilde \rho (\tilde a), \mathcal E_E] = 0$. Finally, from
  Definition~\ref{def:VBAlgebroid} and Lemma~\ref{lem:linear_Euler},
  $\square$ vanishes on both core and linear sections.
\end{proof}

\begin{remark}
  \label{rem:internal_alg}
  It follows from Proposition \ref{prop:internal_alg} that the vector
  field $X_{[\tilde a, -]}$ corresponds to an IM derivation of $W \to
  A$, denoted $\delta \tilde a$ and called an \emph{internal
    derivation}, and by duality to an IM derivation of $W^\ast \to
  A$. Similarly, a linear section of $W^\ast \to C^\ast$ determines an
  IM derivation of both $W$ and $W^\ast$.
\end{remark}

Note that if $(W\Rightarrow E;A \Rightarrow M)$ is the VB-algebroid
of a VB-groupoid $(\mathcal W \rightrightarrows W;G \rightrightarrows
M)$, then for any $\tilde a\in\Gamma_\ell(W,E)$ the vector field
$X_{[\tilde a, -]} \in \mathfrak X (W)$ is the image of
$\overrightarrow{\tilde a} + \overleftarrow{\tilde a}$ under the Lie
functor. We use the same symbol $\delta \tilde a$ to denote the
derivations corresponding to both. It will be always clear from the
context if we are referring to one or the other.

\begin{remark}
  \label{rem:IM_vf}
  Let $A \to M$ be a Lie algebroid.  As proved
  in~\cite{Mackenzie1997}, a vector field $X \in \mathfrak X (A)$ is
  IM if and only if, when regarded as a section $X : A \to TA$ of the
  tangent VB-algebroid, it is a morphism of Lie algebroids.  This
  suggests the following definition. Given a VB-algebroid $(W
  \Rightarrow E; A \Rightarrow M)$ a section $s : A \to W$ of $W \to
  A$ is an \emph{IM section} if it is a Lie algebroid morphism. Thus,
  similarly as for infinitesimal automorphisms of VB-groupoids
  (Remark~\ref{rem:mult_vf}), a vector field $X_W \in \mathfrak X (W)$
  is an infinitesimal automorphism of $(W \Rightarrow E; A \Rightarrow
  M)$ if and only if it is a linear vector field with respect to the
  vector bundle structure $W \to A$, and when regarded as a section of
  $TW \to W$, it is an IM section of the tangent VB-algebroid
  $(TW\Rightarrow TE; W\Rightarrow E)$. In other words, infinitesimal
  automorphisms of $(W\Rightarrow E; A\Rightarrow M)$ are the same as
  morphisms between the VB-algebroids $(W \Rightarrow E; A \Rightarrow
  M)$ and $(TW\Rightarrow TE; TA\Rightarrow TM)$ inverting the
  projection $TW \to W$ on the right.
  \end{remark}

\begin{remark}\label{rem:IM_vf_2}
  A similar characterization as for IM vector fields cannot take
  place, in general, for IM derivations because the gauge algebroid
  $\der W$ of $W \to A$ is \emph{not} a VB-algebroid itself, when
  $(W\Rightarrow E; A\Rightarrow M)$ is a \emph{generic} VB-algebroid.
  However, it turns out that the gauge algebroid $\der E_A$ of a
  \emph{trivial-core} VB-algebroid $(E_A\Rightarrow E; A\Rightarrow
  M)$ (see Section \ref{sec:trivial-core}) fits in a VB-algebroid
  $(\der E_A, \Rightarrow \der E; A \Rightarrow M)$ itself, and IM
  derivations of $(E_A\Rightarrow E; A\Rightarrow M)$ are the same as
  IM sections of $\der E_A \to A$ (Theorem \ref{theor:der_VB-alg}).
  Similar considerations hold true for IM derivations of trivial-core
  VB-algebroids.
\end{remark}
The following proposition proves that IM derivations of VB-algebroids
are the infinitesimal counterpart of multiplicative derivations of
VB-groupoids.  It follows immediately from Definitions
\ref{def:inf_auto_VB-group}, \ref{def:mult_der},
\ref{def:inf_auto_VB-alg}, \ref{def:IM_der} and the general discussion
in Subsection \ref{sec:diff_int} (see \cite{bursztyn2016} for more
details).

\begin{proposition}
  Let $(\mathcal W \rightrightarrows E; G \rightrightarrows M)$ be a
  VB-groupoid with VB-algebroid $(W \Rightarrow E; A \Rightarrow
  M)$. The Lie functor maps infinitesimal automorphisms of $\mathcal
  W$ to infinitesimal automorphisms of $W$.  If, additionally,
  $\mathcal W$ is source simply connected, then any infinitesimal
  automorphism of $W$ integrates to a unique infinitesimal
  automorphism of $\mathcal W$.  In particular, there is a one-to-one
  correspondence between multiplicative derivations of $\mathcal W $
  and IM derivations of $W $.
\end{proposition}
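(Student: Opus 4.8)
The plan is to transport flows of automorphisms across the Lie functor, using only the functoriality and integration statements recalled in Subsection~\ref{sec:diff_int}. For the first assertion I would start from an infinitesimal automorphism $X_\Omega$ of $(\Omega, E; G, M)$ and its (local) flow $\{\phi_\epsilon\}_\epsilon$, which by Definition~\ref{def:inf_auto_VB-group} is a flow of VB-groupoid automorphisms. Since $\mathsf{Lie}$ sends VB-groupoid morphisms to VB-algebroid morphisms, applying it pointwise produces a family $\{\mathsf{Lie}(\phi_\epsilon)\}_\epsilon$ of VB-algebroid automorphisms of $W$; functoriality turns the group law of $\{\phi_\epsilon\}_\epsilon$ into that of $\{\mathsf{Lie}(\phi_\epsilon)\}_\epsilon$, and smooth dependence on $\epsilon$ is inherited, so this is again a local flow. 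Its generator $X_W := \mathsf{Lie}(X_\Omega)$ is then an infinitesimal automorphism of $W$ by Definition~\ref{def:inf_auto_VB-alg}.

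For the converse I would assume $\Omega$ source simply connected and begin from an infinitesimal automorphism $X_W$ of $W$ with (local) flow $\{\psi_\epsilon\}_\epsilon$ of VB-algebroid automorphisms. By the integration statement of Subsection~\ref{sec:diff_int}, each $\psi_\epsilon$ integrates to a unique VB-groupoid automorphism $\phi_\epsilon$ of $\Omega$, source simple connectedness being exactly what makes the integrating morphism exist and be unique. I would then use uniqueness to recover the flow law: $\phi_{\epsilon_1}\circ\phi_{\epsilon_2}$ integrates $\psi_{\epsilon_1}\circ\psi_{\epsilon_2}=\psi_{\epsilon_1+\epsilon_2}$ and hence equals $\phi_{\epsilon_1+\epsilon_2}$, while $\phi_0=\mathrm{id}$. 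Granting smooth dependence on $\epsilon$, the family $\{\phi_\epsilon\}_\epsilon$ is the flow of a vector field $X_\Omega$, which is an infinitesimal automorphism of $\Omega$; the first part then gives $\mathsf{Lie}(X_\Omega)=X_W$, establishing uniqueness of the integration.

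The last assertion I would deduce formally: by Definitions~\ref{def:mult_der} and~\ref{def:IM_der} and the bijection between derivations and linear vector fields of Remark~\ref{rem:dual}, multiplicative derivations of $\Omega\to G$ are precisely the infinitesimal automorphisms of $\Omega$, and IM derivations of $W\to A$ are precisely the infinitesimal automorphisms of $W$; composing with the correspondence just proved yields the bijection. I expect the integration step to be the main obstacle: one must check that the individually integrated automorphisms $\phi_\epsilon$ assemble into a genuine smooth flow, and must deal with possible incompleteness by working throughout with local flows. Both issues rest on the uniqueness of integration of VB-algebroid morphisms out of a source simply connected VB-groupoid together with standard smoothness arguments from Lie theory, for which I would refer to \cite{bursztyn2016}.
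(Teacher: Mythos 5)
Your argument is exactly the one the paper intends: the paper offers no written proof beyond a one-line appeal to Definitions~\ref{def:inf_auto_VB-group}, \ref{def:mult_der}, \ref{def:inf_auto_VB-alg}, \ref{def:IM_der} and the differentiation/integration facts of Subsection~\ref{sec:diff_int}, and your transport of (local) flows of VB-groupoid/VB-algebroid automorphisms through the Lie functor, with uniqueness of integration out of a source simply connected $\Omega$ giving the flow law, is the correct expansion of that appeal. The caveats you flag (smooth dependence on $\epsilon$ and incompleteness, handled via local flows and \cite{bursztyn2016}) are the right ones and do not affect the argument.
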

We conclude this subsection presenting an alternative description of
IM derivations of a VB-algebroid in the same spirit as the algebraic
description of IM vector fields on a Lie algebroid provided by Lie
algebroid derivations. 
\begin{remark}
  \label{rem:delta_formula}
  Let $G \rightrightarrows M$ be a Lie groupoid with Lie algebroid $A
  \Rightarrow M$. The Lie functor maps a multiplicative vector field
  $X_G$ on $G$ to an IM vector field $X_A$ on $A$. In its turn $X_A$
  corresponds to a Lie algebroid derivation $\Delta_A$ of $A$. It has
  been shown in~\cite[Theorem 3.9]{Mackenzie1997} that for every
  right invariant vector field $Y$ on $G$, $[X_G, Y]$ is a right
  invariant vector field and, more importantly, $X_G$ and $\Delta_A$
  are related by the following formula:
  \begin{equation}
    \overrightarrow{\Delta_A a} = [X_G, \overrightarrow a],
  \end{equation}
  for all $a\in\Gamma(A)$.
\end{remark}
Now, let $(W \Rightarrow E; A \Rightarrow M)$ be a VB-algebroid, and
let $X_W$ be an infinitesimal automorphism of it. The horizontal
derivation $\Delta_{W,E}$ corresponding to $X_W$ is a Lie algebroid
derivation of $W \Rightarrow E$. We want to provide an alternative
description of those Lie algebroid derivations of $W \Rightarrow E$
that arise in this way.  In order to do this, we need to recall some
extra features of VB-algebroids.

The space $\Gamma_{\ell}(W,E)$ of linear sections of $W \to E$ (see
Remark \ref{rem:linear_sections}) is a $C^\infty (M)$-module in an
obvious way. Actually, it is the module of sections of a vector bundle
$\widehat A \to M$. Specifically, a point of $\widehat A$ over a point
$x \in M$ is a pair $(a, h)$ consisting of a point $a$ of $A_x$ and a
linear map $h : E_x \to W_a$ which inverts the projection $W_a\to E_x$
on the right.  As linear sections project onto sections of $A \to M$,
there is a vector bundle epimorphism $\pi : \widehat A \to A$.  In
view of the canonical splitting $W|_M=C\oplus E$, linear sections of
$W\to E$ taking their values in the kernel of $\pi:\widehat A\to A$
identify with vector bundle morphisms $E\to C$.  So there is a short
exact sequence
\begin{equation}
  0 
  \longrightarrow 
  \Hom (E, C) 
  \longrightarrow 
  \widehat 
  A 
  \overset{\pi}{\longrightarrow} 
  A 
  \longrightarrow 
  0,
\end{equation}
and in what follows we understand the inclusion $\Hom (E, C)
\hookrightarrow \widehat A$.  In this regard notice that for any
$\chi\in\Gamma(C)$ and $\varphi\in\Gamma(E^\ast)$ the latter
inclusion identifies $\varphi\otimes\chi\in\Gamma(E^\ast\otimes C)$
with the linear section $\varphi\widehat{\chi}\in\Gamma_{\ell}(W,E)$
given by the product of the fiber-wise linear function $\varphi$ and
the core section $\widehat \chi$.

The Lie bracket on $\Gamma (W,E)$ restricts to linear sections. As a
consequence, there is a Lie algebroid structure $\widehat A
\Rightarrow M$ with anchor given by $\rho \circ \pi$. This Lie
algebroid is the so-called \emph{fat algebroid} of $(W \Rightarrow E;
A \Rightarrow M)$ (see \cite{gracia2010lie}). Vector bundle $E \to M$
carries a canonical representation of the fat algebroid, called the
\emph{side representation} and denoted by $\psi^s$. Let $\tilde a$ be
a section of $\widehat A$ or, equivalently, a linear section of $W \to
E$ projecting onto a section $a$ of $A \to M$. The side representation
is implicitly given by
\begin{equation}
  \label{eq:SideImpl}
  \langle \varphi, \psi^s_{\tilde a} e \rangle 
  = 
  \rho (a) \langle \varphi, e \rangle - \langle \tilde \rho (\tilde a) (\varphi),e \rangle,
\end{equation}
for all $\varphi \in \Gamma(E^\ast)$, and $e \in \Gamma (E)$.  Here,
as usual, $\varphi$ is also regarded as a fiber-wise linear function
on $E$.  Recall that given a linear section $\tilde a$ of $W \to E$,
vector field $\tilde \rho (\tilde a)$ is an infinitesimal automorphism
of the vector bundle $E$, hence its Lie derivative preserves vertical
lifts of sections of $E$ (see Remark \ref{rem:vertical_lift}) and
\begin{equation}
  \label{eq:side_vertical_lift}
  \left(\psi^s_{\tilde a} e \right)^{\uparrow} 
  = 
  [ \tilde \rho (\tilde a), e^{\uparrow}],
\end{equation}
for all $e \in \Gamma (E)$. Yet in other words, $\psi^s_{\tilde a}$ is
nothing but the derivation of $E$ corresponding to the infinitesimal
automorphism $\tilde \rho (\tilde a)$ of $E$ (Lemma~\ref{lem:dual}). 
The core $C \to M$ does also carry a representation of the fat
algebroid, called the \emph{core representation} and denoted by
$\psi^c$.  The core representation is implicitely given by
\begin{equation}
  \label{eq:CoreRepres}
  \widehat{\psi^c_{\tilde a} \chi} 
  = 
  [\tilde a, \widehat \chi]
\end{equation}
for all $\tilde a \in \Gamma(\widehat A)$ and $\chi \in \Gamma (C)$,
where $\widehat \chi$ denotes the core section of $W \to E$
corresponding to $\chi$.
Finally, a distinguished vector bundle morphism $\partial : C
\to E$, called the \emph{core-anchor}, is defined by
\begin{equation}
  \label{eq:AlphaRho}
  \langle \partial \chi , \varphi \rangle 
  = 
  \tilde \rho (\widehat \chi) (\varphi),
\end{equation}
for all $\chi \in \Gamma (C)$ and $\varphi \in \Gamma (E^\ast)$.
Notice that our convention on the core-anchor differs by a sign from
that of \cite{gracia2010lie}.  There is an equivalent way to describe
the core-anchor.  Actually, in view of
Definition~\ref{def:VBAlgebroid} and Lemma~\ref{lem:linear_Euler}, for
any $\chi\in\Gamma(C)$, vector field $\tilde \rho
(\widehat\chi)\in\mathfrak{X}(E)$ is the vertical lift of a
(necessarily unique) section of $E$ (see
Remark~\ref{rem:vertical_lift}) and
\begin{equation}
  \label{eq:core-anchor_vertical_lift}
  \left(\partial\chi\right)^{\uparrow}
  =
  \tilde\rho(\widehat\chi).
\end{equation}

\begin{example}
  Let $E \to M$ be a vector bundle, and recall that its \emph{tangent
    double vector bundle} $(TE \to E; TM \to M)$ is actually a
  VB-algebroid $(TE \Rightarrow E; TM \Rightarrow M)$. It follows from
  Remark \ref{rem:linear_VF}, point \textit{vi.)}, and
  Lemma~\ref{lem:dual}, that its fat algebroid is precisely the gauge
  algebroid $\der E$ of $E$. Additionally, both the side vector bundle
  and the core of $(TE\Rightarrow E; TM\Rightarrow M)$ are
  (canonically isomorphic to) $E$ (see also Example \ref{ex:tangent})
  and the core/side representations both agree with the tautological
  action of $\der E$ on $E$. Finally the core-anchor is just the
  identity $\mathrm{id} : E \to E$.
\end{example}
We now show that infinitesimal automorphisms of $(W \Rightarrow E; A
\Rightarrow M)$ or equivalently IM derivations of $W \to A$ can be
characterized in terms of certain algebraic data, in a similar way as
IM vector fields on a Lie algebroid can be characterized in terms of
Lie algebroid derivations. So, let $X_W$ be an infinitesimal
automorphism of $(W \Rightarrow E; A \Rightarrow M)$ and let
$\Delta_{W, A}$ and $\Delta_{W,E}$ be its vertical and horizontal
derivations respectively.  Since $X_W$ is, in particular, an IM vector
field on $W$, then $\Delta_{W,E}$ is a Lie algebroid derivation of the
Lie algebroid $W \Rightarrow E$.  Additionally, $X_W$ is also an
infinitesimal automorphism of the vector bundle $W \to A$, i.e.~it
commutes with $\mathcal{E}_{W,A}$ (cf.~Remark~\ref{rem:linear_VF},
point \textit{v.)}).  It follows that $\Delta_{W,E}$ commutes with
$\Delta_{\mathcal E}$ (see Lemma~\ref{lem:dual}).  Hence, in view of
Lemma~\ref{lem:linear_Euler}, $\Delta_{W,E}$ preserves linear sections
of $W \to E$ and it restricts to a derivation $\Delta_{\widehat A} :
\Gamma (\widehat A) \to \Gamma (\widehat A)$ of the fat
algebroid. Obviously, $\Delta_{\widehat A}$ is a Lie algebroid
derivation.  For the same reason, $\Delta_{W,E}$ preserves core
sections of $W \to E$ and determines a derivation $\Delta_C : \Gamma
(C) \to \Gamma (C)$ of the core.  Consider also the derivations
$\Delta_A,\Delta_E$ of $A,E$, respectively, as in Remark
\ref{rem:inf_automorphism_alg}. We are now ready to state the main
result in this section.
\begin{theorem}
  \label{theor:delta}
  Let $(W \Rightarrow E; A \Rightarrow M)$ be a VB-algebroid. The
  assignments $X_W \mapsto \Delta_{W, A}$ and $X_W \mapsto
  (\Delta_{\widehat A}, \Delta_E, \Delta_C)$ establish one-to-one
  correspondences between
  \begin{itemize}
  \item infinitesimal automorphisms of $(W \Rightarrow E; A
    \Rightarrow M)$,
  \item IM derivations of $W \to A$,
  \item triples $(\Delta_{\widehat A}, \Delta_E, \Delta_C)$ consisting
    of
    \begin{definitionlist}
    \item a Lie algebroid derivation $\Delta_{\widehat A}$ of the fat
      algebroid $\widehat A$,
    \item a derivation $\Delta_E$ of $E$, 
    \item a derivation $\Delta_C$ of $C$,
    \end{definitionlist}
    such that
    \begin{equation} 
      \label{eq:sigma_delta}
      \sigma (\Delta_{\widehat A}) = \sigma (\Delta_E ) = \sigma (\Delta_C), 
    \end{equation}
    \begin{equation} \label{eq:delta_hom}
      \Delta_{\widehat A} \Phi
      = 
      \Delta_C \circ \Phi - \Phi \circ \Delta_E,
    \end{equation}
    for all vector bundle morphisms $\Phi:E\to C$ (also regarded as
    sections of $\widehat A$ via the inclusion $\Hom (E, C)
    \hookrightarrow\widehat A$), and
    \begin{equation}
      \label{eq:core-anchor}
      \partial \circ \Delta_C
      = 
      \Delta_E \circ \partial,
    \end{equation}
    \begin{equation}
      \label{eq:delta_psi}
            [\Delta_E, \psi^s_{\tilde a}] = \psi^s_{\Delta_{\widehat A} \tilde a},
    \end{equation}
    \begin{equation}
      \label{eq:delta_psi_c}
            [\Delta_C, \psi^c_{\tilde a}] = \psi^c_{\Delta_{\widehat A} \tilde a},
    \end{equation}
    for all $\tilde a \in \Gamma (\widehat A)$.
  \end{itemize}
\end{theorem}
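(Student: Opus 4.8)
The plan is to handle the two equivalences separately. The bijection between infinitesimal automorphisms of $(W,E;A,M)$ and IM derivations of $W\to A$ is immediate from Definition~\ref{def:IM_der}: an IM derivation is by definition a derivation $\Delta_{W,A}$ of $W\to A$ whose associated linear vector field $X_W$, under the commutator-preserving correspondence of Remark~\ref{rem:dual}, is an infinitesimal automorphism of the VB-algebroid. Hence the entire content of the theorem lies in matching infinitesimal automorphisms $X_W$ with triples, and I would carry this out through the \emph{horizontal} derivation $\Delta_{W,E}$ of $W\to E$ attached to $X_W$ (Remark~\ref{rem:inf_automorphism_alg}), since it is the horizontal structure that interacts with the Lie algebroid bracket of $W\to E$ and with the fat algebroid.

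For the forward direction, given $X_W$ I would take $\delta_E$ and $\delta_C$ to be the derivations of $E$ and $C$ induced by $X_E$ and $X_C$ as in Remark~\ref{rem:inf_automorphism_alg}, so that \eqref{eq:sigma_delta} holds because all three share the common base symbol $X$. To produce $\delta_{\widehat A}$ I would first show that $\Delta_{W,E}$ preserves both linear and core sections of $W\to E$: since $X_W$ is linear with respect to $W\to A$ it commutes with the Euler field $\mathcal E_{W,A}$, hence $\Delta_{W,E}$ commutes with $\Delta_{\mathcal E}$, and Lemma~\ref{lem:linear_Euler} identifies linear and core sections as the sections on which $\Delta_{\mathcal E}$ acts as $0$ and as $-1$, respectively, which are therefore preserved. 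Thus $\delta_{\widehat A}:=\Delta_{W,E}|_{\Gamma_\ell(W,E)}$ is well defined, and it is a Lie algebroid derivation of the fat algebroid because $\Delta_{W,E}$ is one on $W\to E$ while the anchor and bracket of $\widehat A$ are the restrictions of those of $W\to E$.

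The remaining conditions I would read off from the fact that $\Delta_{W,E}$ is a Lie algebroid derivation, and the very same computations give the backward direction. Applying the derivation property to $[\tilde a,\widehat\chi]=\widehat{\psi^c_{\tilde a}\chi}$ (see \eqref{eq:CoreRepres}) yields \eqref{eq:delta_psi_c}; the anchor compatibility \eqref{eq:RhoDelta} on core sections, via $\tilde\rho(\widehat\chi)=(\alpha\chi)^V$ (see \eqref{eq:core-anchor_vertical_lift}) and $[X_E,e^V]=(\delta_E e)^V$ (Remark~\ref{rem:vertical_lift}), gives \eqref{eq:core-anchor}; the anchor compatibility on linear sections, via the identification of $\psi^s_{\tilde a}$ with the derivation of the linear vector field $\tilde\rho(\tilde a)$ (see \eqref{eq:side_vertical_lift}), gives \eqref{eq:delta_psi}; and evaluating $\delta_{\widehat A}$ on the linear section $\varphi\widehat\chi\in\Gamma(\Hom(E,C))$ through the Leibniz rule $\Delta_{W,E}(\varphi\widehat\chi)=X_E(\varphi)\widehat\chi+\varphi\widehat{\delta_C\chi}$ gives \eqref{eq:delta_hom}. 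Conversely, starting from a triple I would \emph{define} $\Delta_{W,E}$ on generators by $\delta_{\widehat A}$ on linear sections and by $\widehat\chi\mapsto\widehat{\delta_C\chi}$ on core sections, and extend by the Leibniz rule with symbol $X_E$.

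The main obstacle is the backward direction: I must check that this prescription is consistent and produces a genuine infinitesimal automorphism of the VB-algebroid. Well-definedness of the extension is exactly the overlap condition \eqref{eq:delta_hom}, which reconciles the two ways of differentiating $\varphi\widehat\chi$ (as a linear section, and as a fiber-wise linear function times a core section). That $\Delta_{W,E}$ is then a Lie algebroid derivation of $W\to E$ follows by verifying bracket- and anchor-compatibility on the generating linear and core sections, using \eqref{eq:delta_psi}, \eqref{eq:delta_psi_c} and \eqref{eq:core-anchor} for the mixed and anchor terms, the hypothesis that $\delta_{\widehat A}$ is a derivation of $\widehat A$ for the $[\Gamma_\ell,\Gamma_\ell]$ bracket, and the vanishing $[\Gamma_c,\Gamma_c]=0$ from Definition~\ref{def:VBAlgebroid} for the core-core bracket. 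Finally, since $\Delta_{W,E}$ preserves the splitting into linear and core sections by construction, Lemma~\ref{lem:linear_Euler} shows it commutes with $\Delta_{\mathcal E}$, so the corresponding vector field $X_W$ is linear with respect to $W\to A$ as well, i.e.~an infinitesimal automorphism of the full VB-algebroid. The two constructions are visibly mutually inverse, as each recovers $\delta_E,\delta_C$ from $X_E,X_C$ and $\delta_{\widehat A}$ from the restriction of $\Delta_{W,E}$ to linear sections.
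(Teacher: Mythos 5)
Your proposal is correct and follows essentially the same route as the paper: both pass to the horizontal derivation $\Delta_{W,E}$, use commutation with $\Delta_{\mathcal E}$ and Lemma~\ref{lem:linear_Euler} to show that linear and core sections are preserved (yielding $\delta_{\widehat A}$ and $\delta_C$), and extract \eqref{eq:delta_hom}--\eqref{eq:delta_psi_c} from the Leibniz rule and the fact that $\Delta_{W,E}$ is a Lie algebroid derivation, reversing the construction for the converse. Your treatment of the backward direction (well-definedness via the overlap relation \eqref{eq:delta_hom} and verification on generators) is in fact slightly more explicit than the paper's, which delegates those checks to local coordinates.
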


\begin{proof}
  The correspondence $X_W \mapsto \Delta_{W, A}$ between infinitesimal
  automorphisms and IM derivations is one-to-one by definition. Now,
  begin with $X_W$ and consider the associated data $(\Delta_{\widehat
    A}, \Delta_E, \Delta_C)$. The discussion immediately before the
  statement shows that $\Delta_{\widehat A}$ is a Lie algebroid
  derivation. Property~\eqref{eq:sigma_delta} follows from the fact
  that all terms agree with the projection $X$ of $X_W$ on $M$.  For
  property~\eqref{eq:delta_hom}, it is enough to consider $\Phi$ of
  the form $\Phi=\varphi\otimes\chi$ (or equivalently
  $\Phi=\varphi\widehat{\chi}$), where $\varphi\in\Gamma(E^\ast)$ and
  $\chi\in\Gamma(C)$.  From the Leibniz rule for $\Delta_{W,E}$ it
  follows immediately that
  \begin{equation}
    \Delta_{\widehat A}\Phi
    =
    (\Delta_E^\ast\varphi)\otimes\chi+\varphi\otimes\Delta_C\chi\in\Gamma(\Hom(E,C)).
  \end{equation}
  Hence $\Delta_{\widehat A}$ restricts to a derivation of
  $\Hom(E,C)$.  Additionally, from~\eqref{eq:sigma_delta} we get also
  \begin{equation}
    \langle\Delta_{\widehat A}\Phi,e\rangle
    =
    \langle\Delta_E^\ast\varphi,e\rangle\widehat\chi+\langle\varphi,e\rangle\widehat{\Delta_C\chi}
    =
    -\langle\varphi,\Delta_Ee\rangle\widehat\chi+\Delta_C(\langle\varphi,e\rangle\widehat{\chi})
    =
    \langle\Delta_C\circ\Phi-\Phi\circ\Delta_E,e\rangle,
  \end{equation}
  for all $e\in\Gamma(E)$, i.e.~\eqref{eq:delta_hom} holds.  For
  \eqref{eq:core-anchor}, fix $\chi \in \Gamma (C)$ and compute
  \begin{equation}
    (\partial \Delta_C \chi)^{\uparrow}
    =
    \tilde \rho \left( \Delta_{W,E} \widehat \chi \right)
    =
    \left[\sigma (\Delta_{W,E}), \tilde \rho \left( \widehat \chi \right) \right]
    =
    [X_{\Delta_E},(\partial\chi)^{\uparrow}]=(\Delta_E\partial\chi)^{\uparrow},
  \end{equation}
  where we used the fact that $\Delta_{W,E}$ is a Lie algebroid
  derivation of $W$ whose symbol is the linear vector field on $E$
  corresponding to $\Delta_E$.  For the same reason, for all
  $a\in\Gamma(\widehat A)$ and $e\in\Gamma(E)$
  \begin{equation}
    (\psi^s_{\Delta_{\widehat A} \tilde a} e)^{\uparrow}
    =
    [\tilde\rho(\Delta_{W,E}\tilde a),e^{\uparrow}]
    =
    [[\sigma(\Delta_{W,E}),\tilde\rho(\tilde a)],e^{\uparrow}]=([\Delta_E,\psi^s_{\tilde a}]e)^{\uparrow},
  \end{equation}
  which proves \eqref{eq:delta_psi} and for all $\chi \in \Gamma
  (C)$,
  \begin{equation}
    \widehat{\psi^c_{\Delta_{\widehat A} \tilde a} \chi}
    =
    [\Delta_{W,E}\tilde a,\widehat\chi]
    =
    \Delta_{W,E}[\tilde a,\widehat\chi]-[\tilde a,\Delta_{W,E}\widehat \chi]
    =
    \widehat{[\Delta_C, \psi^c_{\tilde a}] \chi},
  \end{equation}
  which proves \eqref{eq:delta_psi_c}.
  Conversely, let $(\Delta_{\widehat A}, \Delta_E, \Delta_C)$ be as in
  the statement.  From~\eqref{eq:sigma_delta}, 
  and \eqref{eq:delta_hom}, $(\Delta_{\widehat A}, \Delta_E,
  \Delta_C)$ extends uniquely to an infinitesimal automorphism $X_W$
  of the double vector bundle $(W \Rightarrow E; A \Rightarrow M)$.
  From~\eqref{eq:core-anchor}, \eqref{eq:delta_psi},
  \eqref{eq:delta_psi_c} and the fact that $\Delta_{\widehat A}$ is a
  Lie algebroid derivation, $X_W$ is an infinitesimal automorphism of
  the VB-algebroid structure as well.  The details can be easily
  checked in local coordinates.
\end{proof}  
Let $(\Delta_{\widehat A}, \Delta_E, \Delta_C)$ be a triple
as in the above theorem. Then, derivations $\Delta_E, \Delta_C$ agree with
those in Remark \ref{rem:inf_automorphism_alg}.  Notice that it
follows from \eqref{eq:delta_hom} that both pairs $(\Delta_{\widehat
 A}, \Delta_E)$ and $(\Delta_{\widehat A}, \Delta_C)$ determine the
whole triple. Additionally $\Delta_{\widehat A}$ and derivation
$\Delta_A$ from Remark \ref{rem:inf_automorphism_alg}, are
$\pi$-related.
 \begin{remark}
   \label{rem:dvb_vs_vb_alg}
   Equations (\ref{eq:sigma_delta}) and
   (\ref{eq:delta_hom}) express compatibility with the double vector
   bundle structure, while Equations (\ref{eq:core-anchor}),
   (\ref{eq:delta_psi}), (\ref{eq:delta_psi_c}) express (the residual)
   compatibility with the algebroid structure.
 \end{remark}
From Proposition~\ref{prop:diff_int}, Remark~\ref{rem:delta_formula} and 
Theorem \ref{theor:delta} the theorem stated below easily follows.
\begin{theorem}\label{theor:inf_aut_grpd}
  Let $(\mathcal W \rightrightarrows E; G \rightrightarrows M)$ be a
  VB-groupoid with VB-algebroid $(W \Rightarrow E; A \Rightarrow M)$.
  Every infinitesimal automorphism $X_\mathcal W$ of $\mathcal W$ determines a
  triple $(\Delta_{\widehat A}, \Delta_E, \Delta_C)$ as in Theorem
  \ref{theor:delta}, via the following formulas
  \begin{equation} 
    \overrightarrow{\Delta_{\hat A} \tilde a} 
    = 
    [X_\mathcal W, \overrightarrow{\tilde a}],
    \quad 
    X_{\Delta_E}  
    = 
    \tilde s_\ast X_\mathcal W 
    = \tilde t_\ast X_\mathcal W
    \quad \text{and} \quad 
    X_{\Delta_C} 
    = 
    X_\mathcal W |_C,  
  \end{equation}
  or, equivalently,
  \begin{equation}
    \overrightarrow{\Delta}_{{\Delta_{\hat A} \tilde a}} 
    = 
    [\Delta_{\mathcal W}, \overrightarrow{\Delta}_{\tilde a}], \quad\Delta_E 
    = 
    \tilde s_\ast \Delta_\mathcal W 
    = 
    \tilde t_\ast \Delta_\mathcal W,
      \quad \text{and} \quad
    \Delta_C 
    = 
    \Delta_\mathcal W |_C, 
  \end{equation}
  for all $\tilde a \in \Gamma (\widehat A)$.  If $\mathcal W
  \rightrightarrows E$ is source simply connected, this correspondence
  is one-to-one.
\end{theorem}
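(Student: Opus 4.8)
The plan is to deduce the theorem by composing two correspondences that are already in place: the Lie functor correspondence between infinitesimal automorphisms of $\Omega$ and of $W$ (the proposition immediately preceding Theorem~\ref{theor:delta}), and the algebraic description of infinitesimal automorphisms of the VB-algebroid $(W,E;A,M)$ furnished by Theorem~\ref{theor:delta}. Concretely, given an infinitesimal automorphism $X_\Omega$ of $\Omega$, I would first apply the Lie functor to produce the infinitesimal automorphism $X_W$ of $(W,E;A,M)$, and then invoke Theorem~\ref{theor:delta} to attach to $X_W$ a triple $(\delta_{\widehat A},\delta_E,\delta_C)$. The content of the theorem is then to verify that this triple is the one described by the stated formulas, and that the whole assignment is bijective when $\Omega$ is source simply connected.

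The crux is the identity $\overrightarrow{\delta_{\widehat A}\tilde a}=[X_\Omega,\overrightarrow{\tilde a}]$. Here I would forget the vector bundle structure and regard $\Omega\rightrightarrows E$ simply as a Lie groupoid, whose Lie algebroid is $W\to E$. Since $X_\Omega$ is in particular a multiplicative vector field on $\Omega\rightrightarrows E$, its image under the ordinary Lie functor is the IM vector field $X_W$ on $W\to E$, which corresponds to the Lie algebroid derivation $\Delta_{W,E}$ (the horizontal derivation of Remark~\ref{rem:inf_automorphism_alg}). Applying the Mackenzie--Xu formula recalled in Remark~\ref{rem:delta_formula} to the groupoid $\Omega\rightrightarrows E$ then yields $\overrightarrow{\Delta_{W,E}w}=[X_\Omega,\overrightarrow w]$ for every $w\in\Gamma(W,E)$. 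Restricting this identity to a linear section $\tilde a\in\Gamma_\ell(W,E)=\Gamma(\widehat A)$, and using (from the proof of Theorem~\ref{theor:delta}) that $\Delta_{W,E}$ preserves linear sections and restricts there to $\delta_{\widehat A}$, I obtain exactly $\overrightarrow{\delta_{\widehat A}\tilde a}=[X_\Omega,\overrightarrow{\tilde a}]$. The equivalent derivation-level identity $\overrightarrow{\Delta}_{\delta_{\widehat A}\tilde a}=[\Delta_\Omega,\overrightarrow{\Delta}_{\tilde a}]$ then follows because $\overrightarrow{\tilde a}$ and $X_\Omega$ are linear vector fields (Proposition~\ref{prop:internal}), hence correspond to derivations $\overrightarrow{\Delta}_{\tilde a}$ and $\Delta_\Omega$ of $\Omega\to G$ (Remark~\ref{rem:internal}), and the correspondence $\tilde X\mapsto\Delta_{\tilde X}$ is commutator-preserving (Remark~\ref{rem:dual}).

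The remaining two formulas are bookkeeping. The identities $X_{\delta_E}=\tilde s_\ast X_\Omega=\tilde t_\ast X_\Omega$ and $X_{\delta_C}=X_\Omega|_C$ record that $X_\Omega$ projects onto $X_E$ via source and target and is tangent to the core with $X_\Omega|_C=X_C$ (Remark~\ref{rem:inf_automorphism}); since the Lie functor for $\Omega\rightrightarrows E$ intertwines $X_\Omega$ and $X_W$ over the common base $E$ and the common core $C$ of $\Omega$ and $W$, the vector fields $X_E$ and $X_C$ coincide with those produced from $X_W$ in Remark~\ref{rem:inf_automorphism_alg}, and hence correspond to the derivations $\delta_E$ and $\delta_C$ of Theorem~\ref{theor:delta}. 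The dual derivation-level identities $\delta_E=\tilde s_\ast\Delta_\Omega=\tilde t_\ast\Delta_\Omega$ and $\delta_C=\Delta_\Omega|_C$ follow in the same way through Remark~\ref{rem:dual}.

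Finally, for the bijectivity statement I would note that when $\Omega$ is source simply connected the Lie functor correspondence $X_\Omega\leftrightarrow X_W$ is one-to-one (again the proposition preceding Theorem~\ref{theor:delta}), while $X_W\leftrightarrow(\delta_{\widehat A},\delta_E,\delta_C)$ is one-to-one by Theorem~\ref{theor:delta}; composing the two bijections gives the claim. I expect the main obstacle to be essentially notational rather than substantial: one must keep straight the two uses of the Lie functor (for $G\rightrightarrows M$ and for $\Omega\rightrightarrows E$) and check carefully that the restriction of $\Delta_{W,E}$ to the fat algebroid genuinely reproduces the derivation $\delta_{\widehat A}$ supplied by Theorem~\ref{theor:delta}. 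All the geometric input is concentrated in Remark~\ref{rem:delta_formula}.
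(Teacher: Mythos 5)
Your proposal is correct and follows essentially the same route as the paper, whose own proof is a one-line citation of the Lie functor correspondence, the Mackenzie--Xu formula of Remark~\ref{rem:delta_formula} applied to the groupoid $\Omega\rightrightarrows E$ with Lie algebroid $W\to E$, and Theorem~\ref{theor:delta}. Your write-up simply makes explicit the details (in particular the restriction of $\Delta_{W,E}$ to linear sections) that the paper leaves to the reader.
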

Consider $X_\mathcal W$ to be an infinitesimal automorphism of the
VB-groupoid $(\mathcal W \rightrightarrows E; G \rightrightarrows M)$
and $X_W$ the infinitesimal automorphism of $(W \Rightarrow E; A
\Rightarrow M) = \mathsf{Lie} (\mathcal W \rightrightarrows E; G
\rightrightarrows M)$ associated to $X_\mathcal W$ by the Lie
functor. Finally, let $(\Delta_{\widehat A}, \Delta_E, \Delta_C)$ be
the algebraic data corresponding to $X_W$ via Theorem
\ref{theor:delta}. The above theorem then states that derivation
$\Delta_E$ (resp.~$\Delta_C$) agrees with that of Remark
\ref{rem:inf_automorphism}.

\subsection{The linear deformation complex of a VB-groupoid/algebroid}
\label{sec:linear_def}

Multiplicative vector fields on a Lie group $G$ can be seen as
$1$-cocycles in the Lie group cohomology complex of $G$ with
coefficients in the adjoint representation.  More generally,
multiplicative vector fields on a Lie groupoid $G \rightrightarrows M$
are $1$-cocycles in the \emph{deformation complex} of $G$
\cite{crainic2015deformations}.  At the infinitesimal level, IM vector
fields on a Lie algebroid $A \Rightarrow M$ are $1$-cocycles in the
\emph{deformation complex} of $A$ \cite{Crainic2008}.  Additionally,
the deformation complex of a Lie groupoid and the deformation complex
of the associated Lie algebroid are intertwined by a Van Est map
\cite{crainic2015deformations}, which is yet another manifestation of
the Lie functor.  In this section we show that there is a natural
cochain complex $C^\bullet_{\mathrm{def}, \mathrm{lin}}(\mathcal W)$
(resp.~$C^\bullet_{\mathrm{def}, \mathrm{lin}}(W)$) attached to every
VB-groupoid $(\mathcal W \rightrightarrows E; G \rightrightarrows M)$
(resp.~VB-algebroid $(W \Rightarrow E; A \Rightarrow M)$) in such a
way that multiplicative (resp.~IM) derivations are the same as
$1$-cocycles, while internal derivations are equivalent to
$1$-coboundaries.  Specifically, $C^\bullet_{\mathrm{def},
  \mathrm{lin}} (\mathcal W)$ (resp.~$C^\bullet_{\mathrm{def},
  \mathrm{lin}} (W)$) is a subcomplex of the deformation complex of
the Lie groupoid $\mathcal W \rightrightarrows E$ (resp.~Lie algebroid
$W \to E$) consisting of cochains that are \emph{linear} in a suitable
sense (see below).  Additionally we show that the Van Est map
restricts to a cochain map intertwining $C^\bullet_{\mathrm{def},
  \mathrm{lin}} (\mathcal W)$ and $C^\bullet_{\mathrm{def},
  \mathrm{lin}} (W)$.  Notice that this is similar to what Cabrera and
Drummond do in \cite{CD2017}. There is one main
difference though. While they work with the Lie groupoid/algebroid
cohomology of the total space of a VB-groupoid/algebroid, we work with
the deformation complex. In this respect we recall that the
deformation complex $C_{\mathrm{def}}^\bullet (\mathcal W)$ of a Lie
groupoid $\mathcal W \rightrightarrows E$ with Lie algebroid $W
\Rightarrow E$, can be seen as a subcomplex in the Lie groupoid
complex $C^\bullet (T^\ast \mathcal W)$ of the cotangent groupoid
$T^\ast \mathcal W \rightrightarrows W^\ast_E$
\cite{crainic2015deformations} (where $W^\ast_E \to E$ is the dual of
the vector bundle $W \to E$).  Specifically, $C_{\mathrm{def}}^\bullet
(\mathcal W)$ is the subcomplex consisting of cochains that are
\emph{linear} with respect to the vector bundle structure $T^\ast
\mathcal W \to \mathcal W$, plus an additional projectability
condition. If $\mathcal W$ is the total space of a VB-groupoid, then
$T^\ast \mathcal W$ is actually a \emph{triple structure}: a double
vector bundle in the category of groupoids. This means that the
results in this sections are analogous to those in \cite{CD2017} but
for groupoids equipped with two (not just one) compatible vector
bundle structures.

Let us recall how is the deformation complex defined. 
Let $G \rightrightarrows M$ be a Lie groupoid with Lie algebroid
$A$. Following \cite{crainic2015deformations}, we denote by $\bar{m}$
the division map in $G$, i.e.
\begin{equation}
  \bar{m}(g,h) 
  = 
  gh^{-1}
\end{equation}
for all $g,h \in G$ such that $s(g) = s(h)$.  For any $k\geq 0$, we
also denote by $G^{(k)}$ the space of $k$-tuples of composable
arrows. In particular, $G^{(0)}=M$ and $G^{(1)}=G$.  The
\emph{deformation complex} of $G$ \cite{crainic2015deformations} is
the pair $(C^\bullet_{\mathrm{def}}(G), \delta)$, where, for $k >0$,
$C^k_{\mathrm{def}}(G)$ is the space of smooth maps
\begin{equation}
  c 
  : 
  G^{(k)} \longrightarrow TG, 
  \quad 
  (g_1, \ldots, g_k) \longmapsto c (g_1, \ldots, g_k) \in T_{g_1} G,
\end{equation}
which are \emph{source-projectable}, i.e.~there exists a (necessarily
unique) smooth map $\sigma_{c} : G^{(k-1)} \longrightarrow TM, \quad
(g_1, \ldots, g_{k-1}) \longmapsto\sigma_c (g_1, \ldots, g_{k-1}) \in
T_{t(g_1)}M$, called the \emph{source projection} of $c$, such that $
(\D s \circ c )(g_1, g_2, \ldots, g_k) = \sigma_{c} (g_2, \ldots,
g_k)$.  So $1$-cochains are exactly source-projectable vector fields
on $G$.  The differential $\delta c$ of $c \in C^k_{\mathrm{def}} (G)$
is defined by
\begin{align}
  (\delta c)(g_1, \ldots, g_{k+1}) 
  = 
  & - \D \bar{m} (c(g_1g_2, g_3, \ldots, g_{k+1}), c(g_2, g_3, \ldots, g_{k+1})) 
  \\
  & + \sum_{i = 2}^{k} (-)^{i} c (g_1, \ldots, g_i g_{i+1}, \ldots, g_{k+1}) + (-)^{k+1} c (g_1, \ldots, g_k).
\end{align}
For $k = 0$, $C^0_{\mathrm{def}}(G)\coloneqq \Gamma (A)$ and for $a
\in \Gamma(A)$, $\delta a\coloneqq \overrightarrow a + \overleftarrow
a$. The terminology is motivated by the fact that $(
C{}^\bullet_{\mathrm{def}}(G), \delta)$ controls deformations of $G
\rightrightarrows M$ \cite{crainic2015deformations}.  It is easy to
see that $1$-cocycles in $( C{}^\bullet_{\mathrm{def}}(G), \delta)$
are the same as multiplicative vector fields on $G$ (see
\cite[Proposition 4.3]{crainic2015deformations}).
We also consider the \emph{normalized deformation complex} of $G$ as
the subcomplex $\widehat C{}^\bullet_{\mathrm{def}}(G) \subset
C{}^\bullet_{\mathrm{def}}(G)$ consisting of \emph{normalized
  cochains}. For $k \geq 2$, a normalized $k$-cochain is a $k$-cochain
$c$ such that for all $x \in M$
\begin{equation}
  c(1_x, g_2, \ldots, g_k) 
  = 
  \sigma_c (g_2, \ldots, g_k) 
  \quad \text{and} \quad 
  c(g_1, g_2, \ldots, 1_x, \ldots, g_k) 
  = 
  0.
\end{equation}
In degree $1$, the only condition is that $c(1_x) = (\sigma_c)_x$, and
in degree $0$ there are no conditions.

Now, let $(\mathcal W \rightrightarrows E; G \rightrightarrows M)$ be a
VB-groupoid with VB-algebroid $(W\Rightarrow E; A\Rightarrow
M)$. Consider the subspace $C^\bullet_{\mathrm{def, lin}} (\mathcal W)$ of
$C^\bullet_{\mathrm{def}} (\mathcal W)$ defined as follows.  For $k > 0$,
$C^k_{\mathrm{def, lin}} (\mathcal W)$ consists of those cochains $\tilde
c\in C^k_{\mathrm{def}}(\mathcal W)$ which are \emph{linear}, i.e.~$\tilde
c : \mathcal W^{(k)} \to T \mathcal W$ is a vector bundle morphism covering a
(necessarily unique) smooth map $c : G^{(k)} \to TG$:
\begin{equation}
\begin{array}{c}
    \begin{tikzcd}
    \mathcal W^{(k)}\arrow[r, "\tilde c"]\arrow[d]& T \mathcal W \arrow[d] \\
    G^{(k)}\arrow[r, "c"]& TG
  \end{tikzcd}
  \end{array}.
\end{equation}
In particular $C^1_{\mathrm{def, lin}}(\mathcal W)$ consists of those
source-projectable vector fields on the Lie groupoid $\mathcal W$ which
are also infinitesimal automorphisms of the vector bundle $\mathcal W\to
G$.  For $k = 0$, $C^0_{\mathrm{def}, \mathrm{lin}} (\mathcal W): =
\Gamma_\ell (W, E) = \Gamma (\widehat A)$, where $\widehat
A\Rightarrow M$ is the fat algebroid of $(W \Rightarrow E; A
\Rightarrow M)$.
\begin{lemma}\label{lem:C_def_lin} \quad
  \begin{definitionlist}
  \item $C^\bullet_{\mathrm{def, lin}}(\mathcal W)$ is a subcomplex of
    $(C^\bullet_{\mathrm{def}} (\mathcal W), \delta)$.
  \item If $\tilde c : \mathcal W^{(k)} \to T \mathcal W$ belongs 
    to $C^k_{\mathrm{def, lin}}(\mathcal W)$, then its \emph{projection} 
    $c: G^{(k)} \to TG$ belongs to $C^k_{\mathrm{def}}(G)$ and 
    $\delta \tilde c$ projects to $\delta c$.
  \end{definitionlist}
\end{lemma}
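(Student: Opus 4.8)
The plan is to lean on the single structural fact that drives everything: in a VB-groupoid every structure map of the top groupoid $\Omega \rightrightarrows E$ (source $\tilde s$, target $\tilde t$, unit $\tilde u$, multiplication $\tilde m$, inversion, hence the division map $\bar m_\Omega$, and each fibre-wise scalar multiplication $h_t$) is a vector bundle morphism covering the corresponding structure map of $G \rightrightarrows M$ (Definition~\ref{def:VBGroupoid} and the remark following it). Applying the tangent functor, each such map induces a morphism of the tangent-lifted vector bundles $\D q_\Omega : T\Omega \to TG$ intertwining the corresponding tangent structure maps. Two things must then be checked: that $\dd$ preserves linearity (part \textit{i}), and that passing to the base map commutes with $\dd$ while producing a genuine deformation cochain of $G$ (part \textit{ii}).

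For part \textit{i} I would characterise linearity through the homogeneity structure, in the spirit of Remark~\ref{rem:linear_vf_hom}. Writing $h_t$ also for the fibre-wise scalar multiplication induced on $\Omega^{(k)}$ and $\D h_t$ for its tangent lift on $T\Omega$, a cochain $\tilde c \in C^k_{\mathrm{def}}(\Omega)$ is linear exactly when it is homogeneous of degree one, i.e.\ $\D h_t \circ \tilde c = \tilde c \circ h_t$ for all $t$; by smoothness it suffices to impose this for $t \neq 0$, where it reads $(h_t)_\ast \tilde c = \tilde c$ with $(h_t)_\ast \tilde c := \D h_t \circ \tilde c \circ h_t^{-1}$. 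Now for $t \neq 0$ the map $h_t$ is a VB-groupoid automorphism, and since $\dd$ is assembled purely out of the groupoid structure maps (multiplication and division) that $h_t$ intertwines, one has the naturality identity $\dd\big((h_t)_\ast \tilde c\big) = (h_t)_\ast (\dd \tilde c)$. Hence $(h_t)_\ast \tilde c = \tilde c$ forces $(h_t)_\ast (\dd \tilde c) = \dd \tilde c$ for all $t \neq 0$, so $\dd \tilde c$ is again homogeneous of degree one and therefore linear. As $\dd \tilde c$ is automatically source-projectable (the deformation differential preserves $C^\bullet_{\mathrm{def}}(\Omega)$), this yields $\dd \tilde c \in C^{k+1}_{\mathrm{def},\mathrm{lin}}(\Omega)$, proving that $C^\bullet_{\mathrm{def},\mathrm{lin}}(\Omega)$ is a subcomplex. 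The degree $0$ case is separate: for $\tilde a \in \Gamma_\ell(W,E)$ the cochain $\dd \tilde a = \overrightarrow{\tilde a} + \overleftarrow{\tilde a}$ is a linear vector field on $\Omega$ by Proposition~\ref{prop:internal}, hence lies in $C^1_{\mathrm{def},\mathrm{lin}}(\Omega)$ and covers $\overrightarrow{a} + \overleftarrow{a} = \dd a$ on $G$.

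For part \textit{ii} I first verify that the base map $c$ is source-projectable. The key is that the source projection $\sigma_{\tilde c}$ of $\tilde c$ is itself linear: evaluating the source-projection identity on a prepended unit gives $\sigma_{\tilde c}(\omega_2, \ldots, \omega_k) = \D \tilde s\big(\tilde c(\tilde u(\tilde t(\omega_2)), \omega_2, \ldots, \omega_k)\big)$, which exhibits $\sigma_{\tilde c}$ as a composition of the vector bundle morphisms $\tilde t$, $\tilde u$, $\tilde c$ and $\D \tilde s$, so $\sigma_{\tilde c}$ covers a smooth map $\sigma_c : G^{(k-1)} \to TM$. Differentiating $q_E \circ \tilde s = s \circ q_\Omega$ and combining $\D q_\Omega \circ \tilde c = c \circ \pr$ with $\D\tilde s\circ\tilde c = \sigma_{\tilde c}\circ\pr$ then gives $\D s \circ c = \sigma_c \circ \pr$ after cancelling the (surjective) bundle projection, so $c \in C^k_{\mathrm{def}}(G)$ with source projection $\sigma_c$. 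That $\dd \tilde c$ projects onto $\dd c$ follows by applying $\D q_\Omega$ termwise to the defining formula: since $q_\Omega$ is a groupoid morphism it intertwines multiplication and division, so $\D q_\Omega$ intertwines $\D \bar m_\Omega$ with $\D \bar m$ and the composition maps, while $\D q_\Omega \circ \tilde c = c \circ \pr$ by linearity; being fibre-wise linear on tangent spaces, $\D q_\Omega$ passes through the sum of terms, yielding $\D q_\Omega \circ \dd \tilde c = \dd c \circ \pr$.

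The delicate point, and the one I would watch most carefully, is the interaction of two additions: the sum defining $(\dd\tilde c)(\omega_1,\ldots,\omega_{k+1})$ lives in the tangent fibre $T_{\omega_1}\Omega$ (the structure $T\Omega \to \Omega$), whereas linearity concerns the transverse structure $T\Omega \to TG$. These are the two legs of the double vector bundle $T\Omega$, and their compatibility is precisely what the homogeneity argument of part \textit{i} sidesteps: degree-one homogeneity is tested fibre by fibre in the $T\Omega \to TG$ direction and is visibly preserved by the naturality of $\dd$, so no explicit appeal to the interchange law is required. Should one prefer the direct route, the same conclusion is reached by evaluating $\tilde c$ on a decomposed argument $\omega_i = \omega_i^{(1)} +_\Omega \omega_i^{(2)}$, splitting each term using additivity of the structure maps, and invoking the interchange law of the double vector bundle $T\Omega$ to recombine the $T_{\omega_1}\Omega$-sum into a $T\Omega \to TG$-sum; this is exactly the verification that can be checked in local coordinates adapted to the double vector bundle structure.
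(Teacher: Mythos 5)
Your proof is correct, and for part \textit{i.)} it follows a genuinely different mechanism from the paper's. The paper's argument for $k>0$ is a direct verification: one checks additivity and homogeneity of $\dd\tilde c$ by hand, using that the division $\bar{\tilde m}:\Omega\,{}_{\tilde s}\!\times_{\tilde s}\Omega\to\Omega$ (like the other structure maps) is a vector bundle morphism covering $\bar m$ --- i.e.\ exactly the ``direct route'' you sketch in your final paragraph, including the bookkeeping with the two additions on the double vector bundle $T\Omega$. You instead encode linearity via the homogeneity structure (legitimate by the Grabowski--Rotkiewicz characterization the paper invokes in Remarks \ref{rem:hom_struct} and \ref{rem:linear_vf_hom}: compatibility with $h_t$ for $t\neq 0$ extends to $t=0$ by smoothness) and then use that $h_t$ is a groupoid automorphism of $\Omega\rightrightarrows E$ together with naturality of the deformation differential under push-forward by groupoid automorphisms, so that $(h_t)_\ast\tilde c=\tilde c$ forces $(h_t)_\ast(\dd\tilde c)=\dd\tilde c$. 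This buys you a cleaner, coordinate-free argument that avoids the interchange law entirely, at the cost of having to justify the naturality identity $\dd\circ(h_t)_\ast=(h_t)_\ast\circ\dd$ (which does hold, since $\dd$ is built solely from $\bar{\tilde m}$ and the partial multiplications, all intertwined by $h_t$, and $\D h_t$ is fibrewise linear on tangent spaces so it passes through the sum of terms). Your treatment of degree $0$ via Proposition \ref{prop:internal} and Remark \ref{rem:internal} coincides with the paper's. For part \textit{ii.)} your argument is essentially the paper's: the only cosmetic difference is that you exhibit $\sigma_c$ by prepending a unit, whereas the paper reads it off as $\D q\circ\sigma_{\tilde c}\circ 0_{\Omega^{(k-1)}}$; both identify the same base map, and the projection of $\dd\tilde c$ onto $\dd c$ follows in both cases from $q_\Omega$ being simultaneously a groupoid morphism and fibrewise linear.
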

\begin{proof}
  Let $\tilde a \in C^0_{\mathrm{def}, \mathrm{lin}} (\mathcal W)$ be
  a linear section of $W \to E$. Then $\delta \tilde a =
  \overrightarrow{\tilde a} + \overleftarrow{\tilde a}$ is an
  infinitesimal automorphism of $(\mathcal W \rightrightarrows E; G
  \rightrightarrows M)$ (Remark \ref{rem:internal}). In particular, it
  belongs to $C^1_{\mathrm{def, lin}}(\mathcal W)$.
  Now, for $k > 0$, let $\tilde c : \mathcal W^{(k)} \to T\mathcal W$
  belong to $ C^k_{\mathrm{def, lin}} (\mathcal W)$, and show that
  $\delta \tilde c $ belongs to $C^{k+1}_{\mathrm{def, lin}} (\mathcal
  W)$. This follows, after a straightforward computation, from the
  linearity of $\tilde c$ and the obvious fact that like the (other)
  structure maps, the division $\bar{\tilde m} = \mathcal
  W\mathbin{{}_{\tilde s}\times_{\tilde s}} \mathcal W \to \mathcal W$ in a
  VB-groupoid $(\mathcal W \rightrightarrows E; G \rightrightarrows
  M)$ is a vector bundle morphism (covering the division $\bar{m} :
  G\mathbin{{}_s \times_s} G \to G$ in $G$).
  The second part of the statement immediately follows from the axioms
  of VB-groupoids and the, easy to check, commutativity of the
  following diagram:
  \begin{equation}
  \begin{array}{c}
    \begin{tikzcd}
      & \mathcal W^{(k)} \arrow[rr, "\mathrm{d}\tilde{s}\circ\tilde{c}"]\arrow[dd]\arrow[dl] & & TE \arrow[dd]\\
      \mathcal W^{(k-1)} \arrow[dd] \arrow[urrr, swap, crossing over, "\sigma_{\tilde c}"] & & & \\
      & G^{(k)} \arrow[ld] \arrow[rr, "\mathrm{d}s\circ c"] & &TM \\
      G^{(k-1)} \arrow[urrr, swap, "\sigma_c"]& & &
    \end{tikzcd}
    \end{array}.
  \end{equation}
  Here the projection $\mathcal W^{(k)} \to \mathcal W^{(k-1)}$ consists in
  dropping the first entry (and similarly for the projection $G^{(k)} \to
  G^{(k-1)}$) and $\sigma_c = \D q \circ \sigma_{\tilde c} \circ
  0_{\mathcal W^{(k-1)}}$, where $0_{\mathcal W^{(k-1)}} : G^{(k-1)} \to
  \mathcal W^{(k-1)}$ is the zero section.
\end{proof}
\begin{proposition}
  \label{prop:1-cocycles_grpd} \quad
  \begin{propositionlist}
  \item Infinitesimal automorphisms of $(\mathcal W \rightrightarrows
    E; G \rightrightarrows M)$ are $1$-cocycles in
    $(C^\bullet_{\mathrm{def, lin}}(\mathcal W), \delta)$.
  \item Internal derivations of $(\mathcal W \rightrightarrows E; G
    \rightrightarrows M)$ are (equivalent to) $1$-coboundaries in
    $(C^\bullet_{\mathrm{def, lin}}(\mathcal W), \delta)$.
  \end{propositionlist}
\end{proposition}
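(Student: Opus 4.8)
The plan is to read off both statements directly from the identification of the one-cochains in the linear subcomplex, combining the structure already established in Lemma~\ref{lem:C_def_lin} with the known characterization of multiplicative vector fields as $1$-cocycles of the full deformation complex of a Lie groupoid. No genuine computation should be needed: everything is bookkeeping with the two presentations of a VB-groupoid morphism and the already-proved subcomplex property.

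For the first statement, I would first unwind Definition~\ref{def:inf_auto_VB-group}. A self-map of $\Omega$ is a VB-groupoid automorphism precisely when it is simultaneously a Lie groupoid automorphism of $\Omega \rightrightarrows E$ and a vector bundle automorphism of $\Omega \to G$ (this is the equivalence recorded right after Definition~\ref{def:VBGroupoidMorph}). Hence a vector field $X_\Omega$ generates a flow of VB-groupoid automorphisms if and only if it generates both a flow of groupoid automorphisms of $\Omega \rightrightarrows E$ and a flow of vector bundle automorphisms of $\Omega \to G$, i.e.~if and only if $X_\Omega$ is at once a \emph{multiplicative} vector field on the Lie groupoid $\Omega \rightrightarrows E$ and a \emph{linear} vector field on the vector bundle $\Omega \to G$ (cf.~Remark~\ref{rem:linear_vf_hom}). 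I would then invoke \cite[Proposition~4.3]{crainic2015deformations}: multiplicative vector fields on $\Omega \rightrightarrows E$ are exactly the $1$-cocycles of $(C^\bullet_{\mathrm{def}}(\Omega), \dd)$, and in particular they are source-projectable. Since $C^1_{\mathrm{def, lin}}(\Omega)$ consists precisely of those source-projectable vector fields on $\Omega$ that are also linear with respect to $\Omega \to G$, the condition ``multiplicative and linear'' is the same as ``$X_\Omega \in C^1_{\mathrm{def, lin}}(\Omega)$ and $\dd X_\Omega = 0$''. Because $C^\bullet_{\mathrm{def, lin}}(\Omega)$ is a subcomplex (Lemma~\ref{lem:C_def_lin}), vanishing of $\dd X_\Omega$ in the subcomplex coincides with vanishing in the full complex; this yields the claimed equivalence between infinitesimal automorphisms of $(\Omega, E; G, M)$ and $1$-cocycles of $(C^\bullet_{\mathrm{def, lin}}(\Omega), \dd)$.

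For the second statement, I would use that $C^0_{\mathrm{def, lin}}(\Omega) = \Gamma_\ell(W, E) = \Gamma(\widehat A)$ and that the degree-zero differential is $\dd \tilde a = \overrightarrow{\tilde a} + \overleftarrow{\tilde a}$. By Remark~\ref{rem:internal}, the internal derivation attached to a linear section $\tilde a$ is exactly the multiplicative derivation corresponding to the linear vector field $\overrightarrow{\tilde a} + \overleftarrow{\tilde a}$, that is, to $\dd \tilde a$. Consequently the $1$-coboundaries of $(C^\bullet_{\mathrm{def, lin}}(\Omega), \dd)$, namely the image $\dd\big(\Gamma(\widehat A)\big)$, are precisely the internal derivations, read through the bijection of Remark~\ref{rem:dual} between linear vector fields and derivations (this bijection is what the phrase ``equivalent to'' in the statement refers to, since internal derivations are operators while coboundaries are vector fields).

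The only point requiring a little care, and the main (mild) obstacle, is to confirm that the two flow-level conditions genuinely decouple: that a flow consists of VB-groupoid automorphisms if and only if it consists of groupoid automorphisms and of vector bundle automorphisms separately, with the corresponding statement at the infinitesimal level for $X_\Omega$. This rests entirely on the equivalent presentation of a VB-groupoid morphism as being both a groupoid morphism and a vector bundle morphism; once this is in place, both items follow formally from Lemma~\ref{lem:C_def_lin} and the cited characterization of multiplicative vector fields, with no further calculation.
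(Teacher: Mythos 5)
Your proposal is correct and follows essentially the same route as the paper, which simply observes that the result ``immediately follows'' from Remark~\ref{rem:internal} and \cite[Proposition~4.3]{crainic2015deformations}; you have merely unpacked that one-line argument, correctly using the characterization of VB-groupoid morphisms as simultaneous groupoid and vector bundle morphisms, the subcomplex property of Lemma~\ref{lem:C_def_lin}, and the identification $C^0_{\mathrm{def, lin}}(\Omega) = \Gamma(\widehat A)$ with $\dd \tilde a = \overrightarrow{\tilde a} + \overleftarrow{\tilde a}$. No gaps.
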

\begin{proof}
  It immediately follows from Remark \ref{rem:internal} and
  \cite[Proposition 4.3]{crainic2015deformations}.
\end{proof}
We now pass to the infinitesimal picture.  Recall from
\cite{Crainic2008} that the \emph{deformation complex} of a Lie
algebroid $A \Rightarrow M$ is the pair $(C^\bullet_{\mathrm{def}}(A),
\delta)$, where, for $k > 0$, $C^k_{\mathrm{def}} (A)$ consists of
(skew-symmetric) multiderivations
\begin{equation}
  c : \Gamma (A) \times \cdots \times \Gamma (A) \longrightarrow \Gamma (A)
\end{equation}
with $k$-entries, and multilinear symbol, i.e.~$c$ is a skew-symmetric
$\mathbb R$-multilinear map, and there exists a (necessarily unique)
vector bundle morphism $\sigma_c : \wedge^{k-1}A \to TM$, called the
\emph{symbol} of $c$, such that
\begin{equation}
  c(a_1, \ldots, a_{k-1}, f a_{k}) 
  = 
  f c(a_1, \ldots, a_{k-1}, a_{k})+ \sigma_c (a_1, \ldots, a_{k-1})(f) a_{k},
\end{equation}
for all $a_1, \ldots, a_k \in \Gamma (A)$ and $f \in C^\infty (M)$.
In particular $1$-cochains are just derivations of the vector bundle
$A\to M$.  The differential $\delta c$ of $c \in C^k_{\mathrm{def}}
(A)$ is defined by
\begin{equation}
  \label{eq:partial}
  \begin{aligned}
    (\delta c)(a_1, \ldots, a_{k+1}) 
    = {}
    & \sum_{i=1}^{k+1}(-)^{i+1} [a_i, c(a_1, \ldots, \widehat a_{i}, \ldots, a_{k+1})]
    \\
    & + \sum_{i<j} (-)^{i+j}c ([a_i, a_j], a_1, \ldots, \widehat a_i, \ldots, \widehat a_j, \ldots, a_{k+1}),
  \end{aligned}
\end{equation}
for all $a_1, \ldots, a_{k+1} \in \Gamma (A)$. 
A straightforward computation then shows that
\begin{equation}
  \label{eq:symbol_partial}
  \begin{aligned}
    \sigma_{\delta c} (a_1, \ldots, a_k)={} 
    & 
    \sum_{i = 1}^k (-)^{i+1} [\rho (a_i), \sigma_c (a_1, \ldots, \widehat a_{i}, \ldots, a_{k})]
    \\
    & +
    \sum_{i < j} (-)^{i+j} \sigma_c ([a_i, a_j],a_1, \ldots, \widehat a_i, \ldots, \widehat a_j, \ldots, a_{k}) - (-)^k \rho (c(a_1, \ldots, a_k)).
  \end{aligned}
\end{equation}
for all $a_1, \ldots, a_k \in \Gamma (A)$.
For $k = 0$, $C^{0}_{\mathrm{def}}(A) \coloneqq \Gamma(A)$ and for
$a \in \Gamma (A)$, $\delta a \coloneqq [a, -]$. The terminology is
motivated by the fact that $(C^\bullet_{\mathrm{def}}(A), \delta)$
controls deformations of $A$ \cite{Crainic2008} (see also
\cite[Section 1.1]{LPV20XX}). We remark, however, that the complex
$(C^\bullet_{\mathrm{def}}(A), \delta)$ first appeared in
\cite{GGU2003} under a different name, and for different purposes (see
Theorem 6 loc.~cit.).

It is immediate that $1$-cocycles in $(C^\bullet_{\mathrm{def}}(A),
\delta)$ are the same as Lie algebroid derivations of $A$
(cf.~\cite[Section 3.1]{Crainic2008}).  Hence they are in one-to-one
correspondence with IM vector fields on $A$.
\begin{remark}
  \label{rem:SJ_bracket}
  The graded space $C^\bullet_{\mathrm{def}} (A)$ is not just a
  cochain complex.  Actually, up to a shift, it becomes a dg-Lie
  algebra when additionally equipped with the \emph{Schouten-Jacobi
    bracket} $[\![-,-]\!]$ (see \cite{LTV16, LOTV14}), originally
  named the \emph{Gerstenhaber bracket} in
  \cite{Crainic2008}. Differential $\delta$ is then $[\![ b, -]\!]$,
  where $b = [-,-]$ is the Lie bracket on $\Gamma(A)$.
\end{remark}
When $A$ is the Lie algebroid of a Lie groupoid $G \rightrightarrows
M$, then the deformation complex of $A$ and the normalized deformation
complex of $G$ are intertwined by a \emph{Van Est map}
\cite{crainic2015deformations}, i.e.~the cochain map
\begin{equation}
  \label{eq:Van_Est}
  \mathcal V : (\widehat C{}^\bullet_{\mathrm{def}}(G), \delta) 
  \longrightarrow (C{}^\bullet_{\mathrm{def}}(A), \delta) 
\end{equation}
defined by
\begin{equation}
  (\mathcal V c)(a_1, \ldots, a_k) 
  := \sum_{\tau \in S_k} (-)^\tau (R_{a_{\tau (1)}} 
  \circ \cdots \circ R_{a_{\tau (k)}})(c),
\end{equation}
for all $c\in\widehat C^k_{\mathrm{def}}(G)$ and $a_1, \ldots, a_k\in\Gamma(A)$. Here, for any $a \in \Gamma(A)$, 
\begin{equation}
R_a :
\widehat C{}^{\bullet+1}_{\mathrm{def}}(G) \to \widehat
C{}^{\bullet}_{\mathrm{def}}(G)
\end{equation}
 is the map defined as follows. For $c
\in \widehat C{}^1_{\mathrm{def}}(G)$, $R_a c \coloneqq [c,
  \overrightarrow a]|_M$ and for $c \in \widehat
C{}^{k+1}_{\mathrm{def}}(G)$, $k > 0$,
\begin{equation}
  \label{eq:R_a}
  (R_a c)(g_1, \ldots, g_{k}) :=
  (-)^k\left.\frac{d}{d\varepsilon}\right|_{\varepsilon = 0} c (g_1, \ldots,
  g_k, \phi_{\varepsilon}^a (s(g_k))^{-1}),
\end{equation}
where $\{\phi_{\varepsilon}^a\}$ is the flow of
$\overrightarrow a$.
Now, let $(W \Rightarrow E; A \Rightarrow M)$ be a
VB-algebroid. Consider the subspace $C^\bullet_{\mathrm{def},
  \mathrm{lin}} (W)$ of $C^\bullet_{\mathrm{def}} (W)$ \color{black} defined as follows.
For $k > 0$, $C^k_{\mathrm{def,lin}}(W)$ consists of
\emph{linear cochains}, i.e.~those multiderivations $\tilde c : \Gamma (W, E)
\times \cdots \times \Gamma (W, E) \to \Gamma (W,E)$ such that
 \begin{gather}
  \tilde c(\tilde a_1, \ldots, \tilde a_k)\text{ is
      a linear section}, \label{eq:c_0} \\
    \tilde c (\tilde a_1, \ldots, \tilde a_{k-1}, \widehat \chi_1) \text{ is
      a core section}, \label{eq:c_1}
    \\ 
    \tilde c (\tilde a_1, \ldots, \tilde a_{k-i}, \widehat \chi_1, \ldots, \widehat \chi_i) = 0, \label{eq:c_2}
  \end{gather}
 and, additionally 
  \begin{gather}
   \sigma_{\tilde c} (\tilde a_1, \ldots, \tilde a_{k-1}) \text{ is a linear vector field,} \label{eq:s_0} \\
    \sigma_{\tilde c} (\tilde a_1, \ldots, \tilde a_{k-2}, \widehat \chi_1) \text{ is the vertical lift of a section of $E$,} \label{eq:s_1}\\ 
      \sigma_{\tilde c}(\tilde a_1, \ldots, \tilde a_{k-i-1}, \widehat \chi_1, \ldots, \widehat \chi_i) = 0, \label{eq:s_2}
  \end{gather}
  for all linear sections $\tilde a_1, \ldots, \tilde a_k$, all
  core sections $\widehat \chi_1, \ldots, \widehat \chi_i$ of
  $W \to E$, and all $i \geq 2$.
  For $k=0$, 
we define $C^0_{\mathrm{def,lin}}(W) := \Gamma_{\ell}(W,E)=\Gamma(\widehat{A})$. 
\color{black}
\begin{lemma}
  $C^\bullet_{\mathrm{def}, \mathrm{lin}}(W)$ is a subcomplex, and a
  Lie subalgebra, in $C^\bullet_{\mathrm{def}}(W)$.
\end{lemma}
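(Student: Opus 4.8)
The plan is to deduce both assertions from two facts: linear cochains are, by definition, exactly the multiderivations sending tuples of linear sections to linear sections, and the Lie bracket of the Lie algebroid $W \to E$ already has this property, since $[\Gamma_\ell(W,E), \Gamma_\ell(W,E)] \subset \Gamma_\ell(W,E)$ by the first linearity condition in Definition~\ref{def:VBAlgebroid}. Because a multiderivation is determined by its restriction to linear sections (which generate $\Gamma(W,E)$ over $C^\infty(E)$ away from the zero section, cf.~Remark~\ref{rem:linear_sections}), it suffices in each case to evaluate the relevant operation on linear arguments and check that the output is again linear.

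For the subcomplex claim I would argue directly from the explicit differential \eqref{eq:partial}. Let $c \in C^k_{\mathrm{def},\mathrm{lin}}(W)$ and let $\tilde a_1, \ldots, \tilde a_{k+1} \in \Gamma_\ell(W,E)$. In the first sum of \eqref{eq:partial} each inner term $c(\tilde a_1, \ldots, \widehat{\tilde a}_i, \ldots, \tilde a_{k+1})$ is linear because $c$ is a linear cochain, and then the adjoint $[\tilde a_i, -]$ of a linear section is again linear by condition \emph{i.)}. In the second sum each bracket $[\tilde a_i, \tilde a_j]$ is linear, once more by condition \emph{i.)}, so that $c([\tilde a_i,\tilde a_j], \tilde a_1, \ldots)$ is evaluated on linear arguments only, hence is linear. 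Thus $(\dd c)(\tilde a_1, \ldots, \tilde a_{k+1}) \in \Gamma_\ell(W,E)$, i.e.~$\dd c \in C^{k+1}_{\mathrm{def},\mathrm{lin}}(W)$.

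For the Lie subalgebra claim I would use the graded Lie structure of Remark~\ref{rem:SJ_bracket}: the Schouten--Jacobi bracket $\Schouten{c,c'}$ is the graded commutator of the Gerstenhaber insertion product, whose terms feed the output of one of the multiderivations, together with the remaining arguments, into the other. Evaluated on linear sections, every such composite has all of its slots filled by linear sections---the inserted factor being linear since the inner multiderivation is a linear cochain---so the outer linear cochain returns a linear section. Hence $\Schouten{c,c'}$ preserves $\Gamma_\ell(W,E)$ and $C^\bullet_{\mathrm{def},\mathrm{lin}}(W)$ is closed under the bracket. As a consistency check, and an alternative route to the subcomplex property, note that the Lie bracket $b=[-,-]$ lies in $C^2_{\mathrm{def},\mathrm{lin}}(W)$ precisely by condition \emph{i.)}, so that $\dd = \Schouten{b,-}$ automatically preserves linear cochains once the bracket is known to do so.

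The routine part is the subcomplex claim. The main obstacle is the Lie subalgebra claim: the Gerstenhaber product of two multiderivations is not pure insertion, but carries additional symbol-correction terms (involving the symbols $\sigma_c,\sigma_{c'}$ and the anchor $\tilde\rho$) needed to make the composite a genuine multiderivation, and one must check that these terms too respect linearity. This follows from the compatibility of linear cochains with the double vector bundle structure: the symbol $\sigma_c(\tilde a_1, \ldots, \tilde a_{k-1})$, being the symbol of the derivation $c(\tilde a_1, \ldots, \tilde a_{k-1}, -)$ of $W \to E$ that preserves linear sections, is a linear vector field on $E$ (this is already the content of the $k=1$ case, cf.~Remark~\ref{rem:inf_automorphism_alg}). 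Once this compatibility is recorded, every constituent of the bracket formula is seen to preserve $\Gamma_\ell(W,E)$, and what remains is a bookkeeping of signs, best carried out in local coordinates adapted to the double vector bundle.
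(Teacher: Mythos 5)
Your argument is correct and is essentially the paper's own proof spelled out: the paper disposes of the lemma in one line by invoking the explicit formula \eqref{eq:partial} for $\dd$, the linearity conditions of Definition~\ref{def:VBAlgebroid}, and the explicit formula for the Schouten--Jacobi bracket from \cite[Proposition 1]{Crainic2008}, which is exactly the route you take. Your additional remark that symbols of linear cochains evaluated on linear sections are linear vector fields is sound (the paper records this fact later, in the proof of Lemma~\ref{lem:linear_Euler_extended}), though it is not strictly needed here since linearity of a cochain is defined purely by preservation of linear sections.
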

\begin{proof}
  It immediately follows from the explicit formula for $\delta$,
  Definition~\ref{def:VBAlgebroid}, and the explicit formula for the
  Schouten-Jacobi bracket (see \cite[Proposition 1]{Crainic2008}).
\end{proof}
Linear $0$-cochains in $C^\bullet_{\mathrm{def}} (W)$ are already
characterized by Lemma \ref{lem:linear_Euler}.  For higher degree
cochains the situation is illustrated by the following
\begin{lemma}
  \label{lem:linear_Euler_extended}
  For any $k\geq 0$, a cochain $\tilde c \in C^k_{\mathrm{def}} (W)$ is linear if and only if
  $[\![\Delta_{\mathcal E}, \tilde c]\!] = 0$, where $[\![-,-]\!]$ is
  the Schouten-Jacobi bracket (see Remark \ref{rem:SJ_bracket}).
\end{lemma}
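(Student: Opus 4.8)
The plan is to exhibit $[\![\Delta_{\mathcal E}, -]\!]$ as the infinitesimal generator of the homogeneity structure that scalar multiplication $h_t = t\cdot_{W,A}$ induces on the deformation complex, and then to identify the linear cochains with its weight-zero part, in perfect analogy with Remark~\ref{rem:linear_vf_hom} and Lemma~\ref{lem:linear_Euler}. By Definition~\ref{def:VBAlgebroid} (see also Remark~\ref{rem:hom_struct}) each $h_t : W \to W$ is a Lie algebroid morphism of $W \to E$, and $\{h_t\}_t$ is a regular homogeneity structure whose generator (of $h_{\operatorname{exp}\tau}$) is $\mathcal E_{W,A}$. Since Lie algebroid automorphisms act on multiderivations and preserve the Schouten--Jacobi bracket, pullback along $h_t$ yields a homogeneity structure $\{h_t^\ast\}_t$ on each $C^k_{\mathrm{def}}(W)$ by automorphisms of the graded Lie algebra $(C^\bullet_{\mathrm{def}}(W), [\![-,-]\!])$.

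First I would identify the generator. As $\mathcal E_{W,A}$ is an infinitesimal automorphism of $W \to E$ corresponding to the derivation $\Delta_{\mathcal E}$ (Remark~\ref{rem:dual}), and as the $h_{\operatorname{exp}\tau}$ are Lie algebroid automorphisms, $\Delta_{\mathcal E}$ is a Lie algebroid derivation of $W \to E$, i.e.~a $1$-cocycle of $C^\bullet_{\mathrm{def}}(W)$. The adjoint action $[\![\Delta_{\mathcal E},-]\!]$ is then the infinitesimal version of the automorphism action, so that $\frac{d}{d\tau}\big|_{\tau=0} h_{\operatorname{exp}\tau}^\ast c = [\![\Delta_{\mathcal E}, c]\!]$ for every $c$; concretely, for the $1$-cochain $\Delta_{\mathcal E}$ the Schouten--Jacobi bracket specialises to the Lie derivative
\[
  [\![\Delta_{\mathcal E}, c]\!](w_1, \ldots, w_k) = \Delta_{\mathcal E}\big(c(w_1, \ldots, w_k)\big) - \sum_{i=1}^{k} c(w_1, \ldots, \Delta_{\mathcal E} w_i, \ldots, w_k),
\]
which follows from the explicit bracket of \cite[Proposition~1]{Crainic2008}. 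Consequently $c$ is invariant under all $h_t^\ast$ iff it lies in the kernel of $[\![\Delta_{\mathcal E},-]\!]$, by the same Euler-field argument as in Remark~\ref{rem:linear_vf_hom}.

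It then remains to match the two notions of linearity, i.e.~to show that $c$ preserves linear sections iff $h_t^\ast c = c$ for all $t$. The implication from invariance to preservation is immediate: by Lemma~\ref{lem:linear_Euler} the linear sections are precisely the $h_t^\ast$-invariant (weight-zero) sections of $W \to E$, so a weight-zero cochain sends linear inputs to a linear output. For the converse I would work in local coordinates $(x,u,v,w)$ adapted to the double vector bundle, in which $h_t(x,u,v,w) = (x,u,tv,tw)$, linear sections read $u = a(x)$, $w = \ell(x)v$ and core sections read $u = 0$, $w = \chi(x)$: demanding that $c$ map every tuple of linear sections to a linear section forces each coefficient of $c$ --- both its symbol and its fibrewise-linear part --- to be homogeneous of the weight dictated by $\mathcal E_{W,A}$, which is exactly the condition $h_t^\ast c = c$. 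Already for $k=1$ this reproduces the equivalence between a derivation $\Delta$ of $W\to E$ preserving linear sections and $[\mathcal E_{W,A}, X_\Delta] = 0$, and the general case is termwise identical.

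The main obstacle is precisely this converse together with the clean identification of the generator: one must check that imposing preservation of linear sections alone pins down not only the fibrewise-linear part of $c$ but also its symbol to the correct weight. A more computational alternative, bypassing the homogeneity language, is to use the displayed Lie-derivative formula directly: since $[\![\Delta_{\mathcal E}, c]\!]$ is again a multiderivation, it is determined by its symbol and its values on the generators of $\Gamma(W,E)$, namely linear and core sections, on which $\Delta_{\mathcal E}$ acts by Lemma~\ref{lem:linear_Euler} as $0$ and $-\mathrm{id}$ respectively; substituting these eigenvectors into the formula reduces the vanishing of the bracket to the statement that $c$ preserves linear sections, and the whole verification is routine in the adapted coordinates above.
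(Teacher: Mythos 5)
Your main argument is correct and takes a genuinely different route from the paper's. You recast $[\![\Delta_{\mathcal E},\tilde c]\!]=0$ as invariance of $\tilde c$ under the pullback action of the homogeneity structure $h_t=t\cdot_{W,A}$ (defined for $t\neq 0$, since $h_0$ is not invertible) and then match $h_t$-invariance with preservation of linear sections by a weight count in coordinates adapted to the double vector bundle. The paper instead works directly with the bracket formula \eqref{eq:Delta_c} and its symbol analogue \eqref{eq:symbol_Delta_c}, evaluates both on tuples of \emph{linear} sections only, and concludes using the fact that linear sections alone generate $\Gamma(W,E)$ over $C^\infty(E)$ away from the zero section of $E$. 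Your framing buys conceptual clarity --- it is literally the multiderivation analogue of Remark~\ref{rem:linear_vf_hom} and Lemma~\ref{lem:linear_Euler} --- while the paper's buys a smaller coordinate check, namely only that a derivation of $W\to E$ preserving linear sections has linear symbol; both proofs defer that crux to an unexecuted local computation, so yours is no less complete than the paper's.

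One caveat on your ``computational alternative'', which is the variant closest to the paper's actual proof: if you take linear \emph{and} core sections as generators, then evaluating $[\![\Delta_{\mathcal E},\tilde c]\!]$ (and its symbol) on tuples containing $i\geq 1$ core sections does not reduce to ``$\tilde c$ preserves linear sections''; it produces the additional eigenvalue conditions $\Delta_{\mathcal E}s=-is$ on the corresponding outputs, i.e.~precisely the conditions of Corollary~\ref{cor:c}, which the paper derives \emph{from} the present lemma rather than assuming beforehand. To make that variant airtight you must either prove those conditions follow from preservation of linear sections, or do as the paper does and test the bracket on linear sections only, which determines the multiderivation away from the zero section and hence everywhere by continuity.
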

\begin{proof}
  First recall that a multiderivation is completely determined by its
  own action and its symbol action on generators. From the explicit
  formula for the Schouten--Jacobi bracket \cite[Proposition
    1]{Crainic2008} it follows that
  \begin{equation}
    \label{eq:Delta_c}
          [\![ \Delta_{\mathcal E}, \tilde c]\!] (w_1, \ldots, w_k) 
          =  \Delta_{\mathcal E} (\tilde c(w_1, \ldots, w_k))  
          - \sum_{i =1}^k \tilde c (w_1, \ldots, \Delta_{\mathcal E}w_i, \ldots, w_k),
  \end{equation}
  \color{black} and
  \begin{equation}
  \label{eq:symbol_Delta_c}
    \sigma_{[\![ \Delta_{\mathcal E}, \tilde c]\!]} (w_1, \ldots, w_{k-1}) 
    = [\sigma (\Delta_\mathcal E), \sigma_{\tilde c} (w_1, \ldots, w_{k-1})] 
    - \sum_{i=1}^{k-1}\sigma_{\tilde c} (w_1, \ldots, \Delta_{\mathcal E} w_i, \ldots, w_{k-1}),
  \end{equation}
  for all $w_1, \ldots, w_{k} \in \Gamma (W, E)$. Using
  \eqref{eq:Delta_c} and Lemma~\ref{lem:linear_Euler} we immediately
  see that if $\tilde c$ is linear, then $[\![\Delta_{\mathcal E},
      \tilde c]\!]$ and its symbol vanish on linear and core
  sections. Hence $[\![\Delta_{\mathcal E}, \tilde c]\!] = 0$.
  Conversely, suppose that $[\![\Delta_{\mathcal E}, \tilde c]\!] =
  0$. Then (\ref{eq:c_0}), (\ref{eq:c_1}), and (\ref{eq:c_2}) follow
  from (\ref{eq:Delta_c}) and Lemma~\ref{lem:linear_Euler}. Similarly,
  (\ref{eq:s_0}), (\ref{eq:s_1}), and (\ref{eq:s_2}) follow from
  (\ref{eq:symbol_Delta_c}), the fact that $\sigma (\Delta_{\mathcal
    E})$ is the Euler vector field on $E$ (cf.~Remark~\ref{rem:linear_VF})
  and that there are no non-trivial vector fields $X$ on $E$
  such that $[\sigma (\Delta_{\mathcal E}), X] = -i X$ for some
  integer $i > 1$ (cf.~Lemma~\ref{lem:linear_Euler}).
\end{proof}
 \color{black}

\begin{corollary}\label{cor:1-cocycles_alg} \quad
  \begin{propositionlist}
  \item Infinitesimal automorphisms of $(W \Rightarrow E; A \Rightarrow M)$ 
    are equivalent to $1$-cocycles in $(C^\bullet_{\mathrm{def, lin}}(W), \delta)$.
  \item Internal derivations of $(W \Rightarrow E; A \Rightarrow M)$ are equivalent to
    $1$-coboundaries in $(C^\bullet_{\mathrm{def, lin}}(W), \delta)$.
  \end{propositionlist} 
\end{corollary}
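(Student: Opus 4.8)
The plan is to reduce everything to the degree-one part of the deformation complex of the Lie algebroid $W \to E$, combined with the linearity criterion of Lemma~\ref{lem:linear_Euler_extended}; indeed, like its groupoid counterpart (Proposition~\ref{prop:1-cocycles_grpd}), the corollary should then follow almost immediately. First I would record the relevant low-degree structure. By definition, a $1$-cochain in $C^\bullet_{\mathrm{def}}(W)$ is precisely a derivation of the vector bundle $W \to E$, the differential of a $0$-cochain $\tilde a \in \Gamma(W,E)$ is $\dd \tilde a = [\tilde a,-]$, and, as for any Lie algebroid (cf.~\cite[Section~3.1]{Crainic2008}), the $1$-cocycles are exactly the Lie algebroid derivations of $W \to E$. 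Via the commutator-preserving correspondence $\tilde X \mapsto \Delta_{\tilde X}$ of Remark~\ref{rem:dual}, such a cocycle is the same datum as an infinitesimal automorphism of the Lie algebroid $W \to E$.

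For the first statement, I would unwind the definition of an infinitesimal automorphism of the VB-algebroid. By Definition~\ref{def:inf_auto_VB-alg} together with the fact recorded after Definition~\ref{def:VBAlgebroidMorph} (a self-map of $W$ is a VB-algebroid morphism iff it is simultaneously a vector bundle morphism for $W \to A$ and a Lie algebroid morphism for $W \to E$), a vector field $X_W$ is an infinitesimal automorphism of $(W,E;A,M)$ if and only if it is at once an infinitesimal automorphism of the Lie algebroid $W \to E$ and of the vector bundle $W \to A$. Passing to the horizontal derivation $\Delta_{W,E}$ attached to $X_W$ (Remark~\ref{rem:inf_automorphism_alg}), the first condition says exactly that $\Delta_{W,E}$ is a Lie algebroid derivation, i.e.~a $1$-cocycle in $C^\bullet_{\mathrm{def}}(W)$; the second says that $X_W$ commutes with $\mathcal E_{W,A}$ (Remark~\ref{rem:linear_vf_hom}), equivalently that $\Delta_{W,E}$ commutes with $\Delta_{\mathcal E}$, which by Lemma~\ref{lem:linear_Euler_extended} in degree one is precisely the condition $\Delta_{W,E} \in C^1_{\mathrm{def, lin}}(W)$. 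Combining the two, $X_W$ is an infinitesimal automorphism of the VB-algebroid if and only if its horizontal derivation is a $1$-cocycle of the linear subcomplex, which is the asserted equivalence.

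For the second statement, I would simply match internal derivations with linear $1$-coboundaries. By Remark~\ref{rem:internal_alg}, an internal derivation is $X_{[\tilde a,-]}$ for some linear section $\tilde a \in \Gamma_\ell(W,E) = C^0_{\mathrm{def, lin}}(W)$, and its horizontal derivation is the adjoint operator $[\tilde a,-]$. Since $\dd \tilde a = [\tilde a,-]$, the horizontal derivations of internal derivations are exactly the images under $\dd$ of linear $0$-cochains, i.e.~the $1$-coboundaries of $(C^\bullet_{\mathrm{def, lin}}(W), \dd)$, which gives the claim.

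The step I expect to require the most care is the characterization invoked in the first statement: that a vector field on $W$ which is simultaneously an infinitesimal automorphism of the Lie algebroid $W \to E$ and of the vector bundle $W \to A$ genuinely generates a flow of VB-algebroid automorphisms, with all base maps descending consistently. This rests on the clause following Definition~\ref{def:VBAlgebroidMorph} and is the infinitesimal shadow of the argument in the proof of Theorem~\ref{theor:delta}; once it is granted, the remainder is a routine translation between vector fields and derivations through Remarks~\ref{rem:dual} and~\ref{rem:linear_vf_hom}, and the equivalence with the $1$-cocycle/$1$-coboundary description follows by bookkeeping.
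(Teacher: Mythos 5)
Your proposal is correct and follows essentially the same route as the paper: the paper's proof is a one-line appeal to Remark~\ref{rem:internal_alg}, Lemma~\ref{lem:linear_Euler_extended} and the identification of $1$-cocycles in $C^\bullet_{\mathrm{def}}(W)$ with Lie algebroid derivations of $W\to E$, and your argument simply spells out that reduction (splitting the VB-algebroid automorphism condition into the Lie algebroid condition on $W\to E$ plus commutation with $\mathcal E_{W,A}$, then translating via $\tilde X\mapsto\Delta_{\tilde X}$). The details you supply, including the degree-one instance of Lemma~\ref{lem:linear_Euler_extended} and the matching of $\dd\tilde a=[\tilde a,-]$ with internal derivations, are exactly the intended ones.
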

\begin{proof}
  It immediately follows from Remark~\ref{rem:internal_alg},
  Lemma~\ref{lem:linear_Euler_extended} and~\cite[Section
    3.1]{Crainic2008}.
\end{proof}
Now we discuss the analogue of Theorem \ref{theor:delta} for cochains
in $C^k_{\mathrm{def}, \mathrm{lin}}(W)$. Namely, let $\tilde c \in
C^k_{\mathrm{def}, \mathrm{lin}}$, $k > 1$. By definition, $\tilde c$
preserves linear sections, hence it determines by restriction a
$k$-cochain $c_{\widehat A}$ in the fat algebroid $\widehat A$.
\textcolor{black}{Additionally,} $\sigma_{\tilde c}$ maps linear
sections to linear vector fields and we set
\begin{equation}
  c_E (\tilde a_1, \ldots, \tilde a_{k-1}) 
  := 
  \Delta_{\sigma_{\tilde c} (\tilde a_1, \ldots, \tilde a_{k-1})},
\end{equation} 
for all $\tilde a_1, \ldots, \tilde a_{k-1} \in \Gamma (\widehat A)$
(recall that $\Delta_X$ denotes the derivation corresponding to a
linear vector field $X$). \textcolor{black}{Moreover,} ${\tilde
  c}$ maps $k-1$ linear sections and a core sections to core sections
and we set
\begin{equation}
  \widehat{c_C (\tilde a_1, \ldots, \tilde a_{k-1})(\chi)} 
  := 
  {\tilde c} (\tilde a_1, \ldots, \tilde a_{k-1}, \widehat \chi),
\end{equation}
for all $\tilde a_1, \ldots, \tilde a_{k-1} \in \Gamma (\widehat A)$
and $\chi \in \Gamma (C)$ (recall that $\widehat \chi$ is the core
section of $W \to E$ corresponding to
$\chi$). \textcolor{black}{Finally,} we define $\mathrm{d}_{\widehat A}$ by
\begin{equation}
    \mathrm{d}_{\widehat A} (\tilde a_1, \ldots, \tilde a_{k-2}) (\chi)^{\uparrow} 
    := 
    \sigma_{\tilde c} (\tilde a_1, \ldots, \tilde a_{k-2}, \widehat \chi),
\end{equation}
(recall that $e^{\uparrow}$ denotes the vertical lift of a section $e
  \in \Gamma(E)$).  For $k = 1$, a cochain $\tilde c \in
  C^1_{\mathrm{def}, \mathrm{lin}}(W)$ is a derivation of $W \to E$
  with the additional property of preserving linear sections. It is
  then obvious that the corresponding linear vector field $X_{\tilde
    c}$ on $W$ is an infinitesimal automorphism of the double vector
  bundle $(W \rightarrow E; A \rightarrow M)$. In particular, we can
  associate to $X_{\tilde c}$ a triple $(\Delta_{\widehat A},
  \Delta_E, \Delta_C)$ exactly as we did in Theorem \ref{theor:delta},
  except for the fact that as $X_{\tilde c}$ is not an
  infinitesimal automorphism of the VB-algebroid in general,
  $(\Delta_{\widehat A}, \Delta_E, \Delta_C)$ do only satisfy
  Equations (\ref{eq:sigma_delta}) and (\ref{eq:delta_hom}), and do
  not satisfy (\ref{eq:core-anchor}), (\ref{eq:delta_psi}) and
  (\ref{eq:delta_psi_c}) in general (see also Remark
  \ref{rem:dvb_vs_vb_alg}). In this section we will denote by
  $(c_{\widehat A}, c_E, c_C)$ the triple $(\Delta_{\widehat A},
  \Delta_E, \Delta_C)$ to stress that it comes from a (non-necessarily
  closed) $1$-cochain in $C^\bullet_{\mathrm{def}, \mathrm{lin}}(W)$.
\begin{theorem}
  \label{theor:linear_cochains_alg} \quad
  \begin{definitionlist}
  \item For $k>1$, the assignment $\tilde c \mapsto (c_{\widehat A}, c_E,
    c_C, \mathrm{d}_{\widehat A}) $ establishes a one-to-one correspondence
    between cochains in $C^k_{\mathrm{def}, \mathrm{lin}}(W)$ and
    $4$-tuples consisting of
    \begin{itemize}
    \item a $k$-cochain $c_{\widehat A}$ in the deformation complex of
      the fat algebroid $\widehat A$,
    \item a vector bundle morphism $c_E : \wedge^{k-1} \widehat A \to
      \der E$,
      \item a vector bundle morphism $c_C : \wedge^{k-1} \widehat A \to \der C$ and
      \item a vector bundle morphism $\mathrm{d}_{\widehat A} : \wedge^{k-2} \widehat A \to \Hom (C,E)$, such that
    \end{itemize}
    \begin{equation}
      \label{eq:derivation_valued}
      \sigma \circ c_E 
      = 
      \sigma \circ c_C = \sigma_{c_{\widehat A}} 
    \end{equation}
    where $\sigma : \der E \to TM$ (resp.~$\sigma : \der C \to TM$) is the symbol map and additionally
    \begin{align}
      c_{\widehat A} (\tilde a_1, \ldots, \tilde a_{k-1}, \Phi) 
      & = 
      c_C (\tilde a_1, \ldots, \tilde a_{k-1}) \circ \Phi - \Phi \circ c_E (\tilde a_1, \ldots, \tilde a_{k-1}), \label{eq:c_A}
      \\
      c_E (\tilde a_1, \ldots, \tilde a_{k-2}, \Phi) 
      & = 
      - \mathrm{d}_{\widehat A} (\tilde a_1, \ldots, \tilde a_{k-2}) \circ \Phi, 
      \\
      c_C (\tilde a_1, \ldots, \tilde a_{k-2}, \Phi) 
      & = 
      - \Phi \circ \mathrm{d}_{\widehat A} (\tilde a_1, \ldots, \tilde a_{k-2}), \label{eq:c_C}
      \\
      \mathrm{d}_{\widehat A} (\tilde a_1, \ldots, \tilde a_{k-3}, \Phi) 
      & = 0 
      \label{eq:d},
    \end{align} 
    for all $\tilde a_1, \ldots, \tilde a_{k-1} \in \Gamma (\widehat
    A)$ and all $\Phi \in \Gamma (\Hom (E,C))$.
   
   If $\tilde c$ corresponds to $(c_{\widehat A}, c_E, c_C,
   \mathrm{d}_{\widehat A})$, then $\delta \tilde c$ corresponds to
   $(\delta c_{\widehat A}, c_E', c_C',\mathrm{d}'_{\widehat A})$ where
    \begin{equation}\label{eq:c_Eprime}
      \begin{aligned}
        c'_E (\tilde a_1, \ldots, \tilde a_k) = {} & 
        \sum_{i = 1}^k (-)^{i+1}[\psi^s_{\tilde a_i}, c_E (\tilde a_1, \ldots, \widehat{\tilde a_i}, \ldots, \tilde a_k)] 
        \\
        & + 
        \sum_{i<j}(-)^{i+j}c_E ([\tilde a_i, \tilde a_j], \tilde a_1, \ldots, \widehat{\tilde a_i}, \ldots, \widehat{\tilde a_j}, \ldots, \tilde a_k)
       - (-)^k \psi^s_{c_{\widehat A}(\tilde a_1, \ldots, \tilde a_k)},
      \end{aligned}
    \end{equation}
    and like-wise for $c'_C$ and 
    \begin{equation}\label{eq:d_Aprime}
      \begin{aligned}
        &\mathrm{d}'_{\widehat A} (\tilde a_1, \ldots, \tilde a_{k-1}) \\
        = 
        & \sum_{i = 1}^{k-1} (-)^{i+1} \left( \psi^s_{\tilde{a}_i} \circ \mathrm{d}_{\widehat A} (\tilde a_1, \ldots, \widehat{\tilde a_i}, \ldots, \tilde a_{k-1})
        - \mathrm{d}_{\widehat A} (\tilde a_1, \ldots, \widehat{\tilde a_i}, \ldots, \tilde a_{k-1}) \circ \psi^c_{\tilde{a}_i} \right) \\
        & 
        + \sum_{i<j} (-)^{i+j} \mathrm{d}_{\widehat A}([\tilde a_i, \tilde a_j], \tilde a_1, \ldots, \widehat{\tilde a_i}, \ldots,
        \widehat{\tilde a_j}, \ldots, \tilde a_{k-1}) \\
        & + (-)^k \left( c_E(\tilde a_1, \ldots, \tilde a_{k-1}) \circ \partial - \partial \circ c_C (\tilde a_1, \ldots, \tilde a_{k-1})\right),
      \end{aligned}
    \end{equation}
    for all $\tilde a_1, \ldots, \tilde a_k \in \Gamma (\widehat A)$.
  \item For $k=1$, the assignment $\tilde c \mapsto (c_{\widehat A},
    c_E, c_C)$ establishes a one-to-one correspondence between
    $1$-cochains in $C^\bullet_{\mathrm{def,lin}}(W)$ and triples
    $(c_{\widehat A},c_E,c_{C})$, where $c_{\widehat A}$, $c_E$, $c_C$
    are derivations of the vector bundles $\widehat A$, $E$, $C$
    respectively, such that
    \begin{equation}
      \sigma(c_E)=\sigma(c_C)=\sigma(c_{\widehat A})
    \end{equation}
    and 
    \begin{equation}
    c_{\widehat A}(\Phi)=c_C \circ\Phi-\Phi\circ c_E
    \end{equation}
    for all $\Phi\in\Gamma(\Hom(E,C))$.  If $\tilde c$ corresponds to
    $(c_{\widehat A},c_E,c_C)$, then $\delta \tilde c\in
    C^2_{\mathrm{def}, \mathrm{lin}}(W)$ corresponds to the $4$-tuple
    $(\delta c_{\widehat A},c'_E, c'_C,\mathrm{d}'_{\widehat A})$, where
    \begin{equation}
    \mathrm{d}'_{\widehat A}=\partial \circ c_C-c_E\circ\partial
    \end{equation}
    and for all $\tilde a\in\Gamma(\widehat{A})$,
    \begin{equation}
      c'_E(\tilde a)
      =
      [\psi^s_{\tilde a},c_E]+\psi^s_{c_{\widehat A}(\tilde a)},\qquad c'_C(\tilde a)=[\psi^c_{\tilde a},c_C]+\psi^c_{c_{\widehat A}(\tilde a)}.
    \end{equation}
  \item For $k = 0$, the $0$-cochains $\tilde c$ in
    $C^\bullet_{\mathrm{def,lin}}(W)$ are the same as sections of the
    fat algebroid $\widehat A \to M$.  If $c\in C^0_{\mathrm{def},
      \mathrm{lin}}(W)$ is given by $\tilde a \in \Gamma (\widehat
    A)$, then $\delta c$ corresponds to the triple $(\delta \tilde a,
    \psi^s_{\tilde a}, \psi^c_{\tilde a})$.
  \end{definitionlist}
\end{theorem}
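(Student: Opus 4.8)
The plan is to exploit the fact that a multiderivation is completely determined by its action together with its symbol on a generating set of sections, and that linear and core sections generate $\Gamma(W,E)$ over $C^\infty(E)$ (at least away from the zero section of $E$, which suffices; see Remark~\ref{rem:linear_sections}). Corollary~\ref{cor:c} already records the precise \emph{shape} of the values of a linear cochain $\tilde c$ and of its symbol on such generators, and this is exactly the information repackaged by the $4$-tuple $(c_{\widehat A}, c_E, c_C, \D_{\widehat A})$. I would therefore treat the correspondence as an unpacking of $\tilde c$ into these four components and, conversely, as a reconstruction of $\tilde c$ from prescribed values on generators.

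For the forward direction (fixing $k>1$), I would set $c_{\widehat A}(\tilde a_1,\dots,\tilde a_k) := \tilde c(\tilde a_1,\dots,\tilde a_k)$ on linear sections; by linearity of $\tilde c$ this lands in $\Gamma_\ell(W,E)=\Gamma(\widehat A)$, and the multiderivation property of $\tilde c$ restricts to show $c_{\widehat A}$ is a $k$-cochain of the fat algebroid. Next, by \eqref{eq:s_0} the vector field $\sigma_{\tilde c}(\tilde a_1,\dots,\tilde a_{k-1})$ is linear, so (Remark~\ref{rem:dual}) it defines a derivation $c_E(\tilde a_1,\dots,\tilde a_{k-1})$ of $E$; by \eqref{eq:c_1} the derivation $\tilde c(\tilde a_1,\dots,\tilde a_{k-1},-)$ preserves core sections and hence induces a derivation $c_C(\tilde a_1,\dots,\tilde a_{k-1})$ of $C$; and by \eqref{eq:s_1} the field $\sigma_{\tilde c}(\tilde a_1,\dots,\tilde a_{k-2},\widehat\chi)$ is the vertical lift of a section of $E$, whose dependence on $\chi$ defines $\D_{\widehat A}(\tilde a_1,\dots,\tilde a_{k-2})\in\Gamma(\Hom(C,E))$. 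The relation \eqref{eq:derivation_valued} then follows by comparing the three symbols (all equal to the base projection of $\sigma_{\tilde c}$), and the remaining identities \eqref{eq:c_A}--\eqref{eq:d} are obtained by feeding into $\tilde c$ and $\sigma_{\tilde c}$ the $\Hom(E,C)$-type linear sections $\varphi\widehat\chi$ (corresponding to $\Phi=\varphi\otimes\chi$) and using the Leibniz rule and $C^\infty(E)$-tensoriality of the symbol in each slot, together with the dual-derivation formula \eqref{eq:dual_derivation}.

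For the converse, I would prescribe $\tilde c$ on generators by reversing these formulas -- $c_{\widehat A}$ on all-linear inputs, $\widehat{c_C(\dots)\chi}$ when the last slot is a core section, $0$ on two or more core slots, and the analogous prescriptions for $\sigma_{\tilde c}$ via $c_E$ and $\D_{\widehat A}$ -- and then extend to all of $\Gamma(W,E)$ by $C^\infty(E)$-multilinearity, skew-symmetry, and the derivation/symbol rule. I expect the main obstacle here to be \emph{well-definedness}: linear and core sections generate but not freely, the key overlap being that $\varphi\widehat\chi$ can be read either as $\varphi$ times a core section or as the linear section $\Phi=\varphi\otimes\chi$; checking that the two evaluations agree is exactly the content of \eqref{eq:c_A}--\eqref{eq:d}, so these conditions are precisely what makes the reconstruction consistent. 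Skew-symmetry and linearity of the resulting $\tilde c$ are then automatic from the construction, and the details are most transparently verified in local coordinates adapted to the double vector bundle structure of $W$ (as in the proof of Theorem~\ref{theor:delta}). The cases $k=1$ and $k=0$ are degenerate: for $k=0$ the identification $C^0_{\mathrm{def,lin}}(W)=\Gamma(\widehat A)$ is the definition, and $\dd\tilde a=[\tilde a,-]$ is recognized as the triple $(\dd\tilde a,\psi^s_{\tilde a},\psi^c_{\tilde a})$ directly from \eqref{eq:side_vertical_lift} and \eqref{eq:CoreRepres}; for $k=1$ a linear $1$-cochain is a linear derivation of $W\to E$, and the correspondence with $(c_{\widehat A},c_E,c_C)$ is the version of Theorem~\ref{theor:delta} without the cocycle conditions (the extra identities \eqref{eq:core-anchor}, \eqref{eq:delta_psi}, \eqref{eq:delta_psi_c} there being the cocycle condition $\dd\tilde c=0$).

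Finally, for the differential formulas I would substitute generators into the explicit expression \eqref{eq:partial} for $\dd$ and its symbol \eqref{eq:symbol_partial}. Evaluating $\sigma_{\dd\tilde c}$ on $k$ linear sections and translating the adjoint symbols $\tilde\rho(\tilde a_i)=\sigma([\tilde a_i,-])$ into the side representation $\psi^s$ yields \eqref{eq:c_Eprime} (and $c'_C$ analogously from the core representation $\psi^c$ via \eqref{eq:CoreRepres}), while evaluating $\sigma_{\dd\tilde c}$ on $(\tilde a_1,\dots,\tilde a_{k-1},\widehat\chi)$ and using the core-anchor identity \eqref{eq:core-anchor_vertical_lift} together with \eqref{eq:CoreRepres} produces \eqref{eq:d_Aprime}; the $k=1$ specialization reproduces $\D'_{\widehat A}=\alpha\circ c_C-c_E\circ\alpha$. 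This last step is the most calculation-heavy, but it is a routine bookkeeping of the representations $\psi^s,\psi^c$, the core-anchor $\alpha$, and the brackets, with no conceptual difficulty beyond that already handled by Corollary~\ref{cor:c}.
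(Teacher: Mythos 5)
Your proposal follows essentially the same route as the paper: you extract $(c_{\widehat A}, c_E, c_C, \D_{\widehat A})$ from $\tilde c$ exactly as the paper does (restriction to linear sections, the linear symbol, the induced core derivation, and the vertical part of the symbol on a core input, all justified by Corollary~\ref{cor:c}), verify \eqref{eq:c_A}--\eqref{eq:d} on $\Phi = \varphi \otimes \chi$, treat $k=1$ as Theorem~\ref{theor:delta} without the cocycle conditions, and obtain the differential formulas from \eqref{eq:partial} and \eqref{eq:symbol_partial}. The only divergence is in showing that every admissible $4$-tuple arises: you reconstruct $\tilde c$ on generators and check well-definedness on the overlap $\varphi\widehat\chi$ directly, whereas the paper notes that $\tilde c \mapsto (c_{\widehat A}, c_E, c_C, \D_{\widehat A})$ is an injective $C^\infty(M)$-linear map between section modules of vector bundles of the same rank; both arguments work, and your consistency check has the merit of making explicit why \eqref{eq:c_A}--\eqref{eq:d} are precisely the right compatibility conditions.
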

\begin{proof} 
  The second and the third claims are straightforward. Let us prove
  the first one.  Let $\tilde c \in C^k_{\mathrm{def},
    \mathrm{lin}}(W)$. It is clear from the definition that
  $c_{\widehat A}$ is a multiderivation, both $c_E$ and $d_{\widehat
    A}$ are multi-linear and~\eqref{eq:derivation_valued} holds. In
  order to prove the multi-linearity of $c_C$ (in its first $k-1$
  arguments), let $f \in C^\infty (M)$ and compute
  \begin{equation}
    {\tilde c}(\tilde a_1, \ldots \tilde a_{k-2}, f \tilde a_{k-1}, \widehat \chi) 
    = 
    f {\tilde c}(\tilde a_1, \ldots \tilde a_{k-2}, \tilde a_{k-1}, \widehat \chi) 
    -
    \sigma_{\tilde c} (\tilde a_1, \ldots \tilde a_{k-2},\widehat \chi)(f) \tilde a_{k-1}.
  \end{equation}
  Since $\sigma_{\tilde c} (\tilde a_1, \ldots \tilde a_{k-2},\widehat
  \chi)$ is a vertical vector field on $E$, the last summand vanishes
  and we get $C^\infty (M)$-linearity in $\tilde a_{k-1}$.
  Now, we check \eqref{eq:c_A}--\eqref{eq:d}. It is enough to consider
  $\Phi$ of the form $\Phi = \varphi \otimes \chi$, where $\varphi \in
  \Gamma (E^\ast)$ and $\chi\in \Gamma (C)$. Hence $\Phi$ corresponds
  to the linear section $\varphi \widehat \chi$, where, as usual, we
  interpret $\varphi$ as a fiber-wise linear function on $E$, and
  \eqref{eq:c_A}--\eqref{eq:c_C} easily follow from either the Leibniz
  rule for ${\tilde c}$ or the multi-linearity of $\sigma_{\tilde c}$,
  while \eqref{eq:d} follows from \eqref{eq:s_2}.
  As already recalled, a multiderivation $c$ of a vector bundle
  is completely determined by its action and the action of $\sigma_c$
  on generators of the module of sections of the vector bundle.  As
  linear and core sections generate $\Gamma(W, E)$ as a $C^\infty
  (E)$-module, then the data $(c_{\widehat A}, c_E, c_C, \mathrm{d}_{\widehat A})$
  determine ${\tilde c} \in C^\bullet_{\mathrm{def}, \mathrm{lin}}(W)$
  completely, i.e.~the correspondence ${\tilde c} \mapsto (c_{\widehat A},
  c_E, c_C, \mathrm{d}_{\widehat A})$ is injective.  To see that every
  $4$-tuple $(c_{\widehat A}, c_E, c_C, \mathrm{d}_{\widehat A})$ as in the
  statement arises in this way, use local coordinates to show that
 ${\tilde c} \mapsto (c_{\widehat A}, c_E, c_C,
  \mathrm{d}_{\widehat A})$ is a(n injective) $C^\infty (M)$-linear map
  between modules of sections of two vector bundles over $M$ of the
  same rank.
   Finally, \eqref{eq:c_Eprime} and \eqref{eq:d_Aprime} follow from
  \eqref{eq:partial} and \eqref{eq:symbol_partial} by a
  straightforward computation.
\end{proof}

Let $(c_{\widehat A}, c_E, c_C, \mathrm{d}_{\widehat A})$ be a $4$-tuple as in
the statement of the above theorem. Then condition
\eqref{eq:derivation_valued} says that $c_E$ and $c_C$ are
\emph{derivation valued skew-symmetric $k$-forms on $\widehat A$} in
the sense of \cite[Section 2.4]{V2015}.  Additionally, it follows from
\eqref{eq:c_A} that $c_{\widehat A}$ descends to a $k$-cochain in the
deformation complex of $A$, i.e.
\begin{equation}
  c_A (\pi (-), \ldots, \pi (-)) = \pi \circ c_{\widehat A},
\end{equation}
for some $c_A \in C^k_{\mathrm{def}} (A)$, where $\pi : \widehat A \to
A$ is the projection.  Finally, from \eqref{eq:d}, $\mathrm{d}_{\widehat A}$
descends to a vector bundle morphism $\mathrm{d}_A : \wedge^{k-1} A \to \Hom
(C,E)$, i.e.
\begin{equation}
  \mathrm{d}_{\widehat A} = \mathrm{d}_A (\pi (-), \ldots, \pi (-)).
\end{equation}
We conclude this subsection showing that the deformation complex of a
VB-groupoid and the deformation complex of its VB-algebroid are
intertwined by a \emph{linear} Van Est map.
\begin{theorem}[Linear Van Est map] 
  Let $(\mathcal W \rightrightarrows E; G \rightrightarrows M)$ be a
  VB-groupoid with (associated) VB-algebroid $(W \Rightarrow E; A
  \Rightarrow M)$.  The Van Est map \eqref{eq:Van_Est} restricts to a
  cochain map
  \begin{equation}
    \mathcal{V} :  (\widehat C{}^\bullet_{\mathrm{def, lin}}(\mathcal W), \delta) \to (C^\bullet_{\mathrm{def,lin}} (W), \delta), 
  \end{equation}
  where $\widehat C{}^\bullet_{\mathrm{def, lin}}(\mathcal W)$ denotes
  \emph{linear, normalized cochains}, i.e.~$\widehat
  C{}^\bullet_{\mathrm{def, lin}}(\mathcal W) = \widehat
  C{}^\bullet_{\mathrm{def}}(\mathcal W) \cap C{}^\bullet_{\mathrm{def,
      lin}}(\mathcal W)$.
\end{theorem}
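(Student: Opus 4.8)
The plan is to reduce the whole statement to a single containment. The Van Est map \eqref{eq:Van_Est}, applied to the Lie groupoid $\Omega \rightrightarrows E$ with Lie algebroid $W \to E$, is already a cochain map $\mathcal V : \widehat C{}^\bullet_{\mathrm{def}}(\Omega) \to C^\bullet_{\mathrm{def}}(W)$ by \cite{crainic2015deformations}. Since $C^\bullet_{\mathrm{def,lin}}(W)$ is a subcomplex and $\widehat C{}^\bullet_{\mathrm{def,lin}}(\Omega) = \widehat C{}^\bullet_{\mathrm{def}}(\Omega) \cap C^\bullet_{\mathrm{def,lin}}(\Omega)$ is a subcomplex of the normalized complex (Lemma \ref{lem:C_def_lin}), it is enough to show that $\mathcal V$ sends linear cochains to linear cochains, i.e.~that $(\mathcal V \tilde c)(\tilde a_1, \ldots, \tilde a_k) \in \Gamma_\ell(W, E)$ whenever $\tilde c \in \widehat C{}^\bullet_{\mathrm{def,lin}}(\Omega)$ and $\tilde a_1, \ldots, \tilde a_k \in \Gamma_\ell(W, E)$. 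Once this is established, the restriction of a cochain map to subcomplexes is automatically a cochain map.

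The organizing principle I would use is that \emph{linearity is homogeneity invariance on both sides}. For $t \neq 0$ the fiberwise scaling $h_t = t\cdot_\Omega : \Omega \to \Omega$ is a Lie groupoid automorphism of $\Omega \rightrightarrows E$ (Remark \ref{rem:hom_struct}), covering $t\cdot_E$ on the unit space $E$, and its image under the Lie functor is $h_t^W = t\cdot_{W,A}$ (compatibility of $\mathsf{Lie}$ with the VB-structure, Subsection \ref{sec:diff_int}). A $k$-cochain $\tilde c$ on $\Omega$ with $k \geq 1$ is linear precisely when it is a vector bundle morphism $\Omega^{(k)} \to T\Omega$, which unwinds to the relation $\tilde c \circ h_t^{(k)} = T h_t \circ \tilde c$, where $h_t^{(k)}$ denotes the fiberwise scaling on $\Omega^{(k)} \to G^{(k)}$; on the algebroid side a cochain on $W$ is linear iff $[\![\Delta_{\mathcal E}, \tilde c]\!] = 0$ (Lemma \ref{lem:linear_Euler_extended}), equivalently iff it is invariant under the flow of $\mathcal E_{W,A}$; and a section of $W \to E$ is linear iff it is $h_t^W$-invariant (Lemma \ref{lem:linear_Euler}).

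The heart of the argument is the claim that, for a linear section $\tilde a$, the operator $R_{\tilde a}$ of \eqref{eq:R_a} preserves linearity in every degree. The crucial input is Proposition \ref{prop:internal}: $\overrightarrow{\tilde a}$ is a \emph{linear} vector field on $\Omega$, so its flow $\phi^{\tilde a}_\epsilon$ consists of vector bundle automorphisms of $\Omega \to G$ and hence commutes with $h_t$. For $k \geq 1$ I would compute directly from \eqref{eq:R_a}: since $h_t$ is a groupoid morphism commuting with $\phi^{\tilde a}_\epsilon$, the inserted arrow transforms as $\phi^{\tilde a}_\epsilon(\tilde s(h_t \omega_k))^{-1} = h_t\bigl(\phi^{\tilde a}_\epsilon(\tilde s(\omega_k))^{-1}\bigr)$; feeding this into the linearity relation $\tilde c \circ h_t^{(k+1)} = T h_t \circ \tilde c$ and pulling the fiberwise linear map $T h_t$ through the $\epsilon$-derivative shows that $R_{\tilde a}\tilde c$ satisfies the degree-$k$ linearity relation, so it is again linear. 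For the bottom step $R_{\tilde a} : \widehat C{}^1_{\mathrm{def}}(\Omega) \to \Gamma(W, E)$ one has $R_{\tilde a}\tilde c = [\tilde c, \overrightarrow{\tilde a}]|_E$, the restriction to the unit space of the commutator of two linear vector fields; such a commutator is again linear, and because $\mathsf{Lie}(h_t) = h_t^W$ this restriction is an $h_t^W$-invariant, hence linear, section of $W \to E$.

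Granting the claim, the conclusion is immediate: in the Van Est formula each summand $R_{\tilde a_{\tau(1)}} \circ \cdots \circ R_{\tilde a_{\tau(k)}}(\tilde c)$ is obtained by applying linearity-preserving operators to the linear cochain $\tilde c$, so it lands in $\Gamma_\ell(W, E)$, and so does the signed sum, giving $(\mathcal V \tilde c)(\tilde a_1, \ldots, \tilde a_k) \in \Gamma_\ell(W, E)$. The main obstacle is the key claim, and within it the bookkeeping of the bottom-degree step, where one must ensure that restriction to the unit space intertwines $h_t$ on $\Omega$ with $h_t^W$ on $W$; this is exactly the compatibility $\mathsf{Lie}(h_t) = h_t^W$. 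By contrast, the normalization of $R_{\tilde a}\tilde c$ and the cochain-map property are inherited for free from the general theory of \cite{crainic2015deformations}, so no separate verification is needed there.
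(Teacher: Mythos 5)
Your proposal is correct and follows essentially the same route as the paper: reduce the statement to the claim that, for $\tilde a \in \Gamma_\ell(W,E)$, the operator $R_{\tilde a}$ preserves the linear (normalized) subcomplex, and deduce this from Proposition \ref{prop:internal} — namely that $\overrightarrow{\tilde a}$ is a linear vector field whose flow $\{\phi^{\tilde a}_\epsilon\}_\epsilon$ consists of vector bundle automorphisms — combined with formula \eqref{eq:R_a}. The paper states this in a single sentence; your write-up merely spells out the homogeneity-structure bookkeeping (including the bottom-degree step $R_{\tilde a}\tilde c = [\tilde c, \overrightarrow{\tilde a}]|_E$) that the paper leaves implicit.
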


\begin{proof}
  \color{black} When the side bundle $E$ has positive rank the proof
  can be significantly simplified by noticing that in this case a
  cochain $\tilde c \in C^\bullet_{\mathrm{def}} (W)$ is linear if and
  only if it preserves linear sections. This means that if
  $\operatorname{rank} E > 0$, only condition (\ref{eq:c_0}) is needed
  in the definition of linear cochains, while conditions
  (\ref{eq:c_1})-(\ref{eq:s_2}) are redundant. This can be seen, for
  instance, in local coordinates. In the rest of this proof we assume
  $\operatorname{rank} E > 0$. A complete proof will be presented in
  \cite{LPV2019}.  \color{black}

  So, it is enough to show that if $\tilde a$ is a linear section of
  $W \to E$, then $R_{\tilde a}$ maps $\widehat
  C{}^{\bullet+1}_{\mathrm{def}, \mathrm{lin}} (\mathcal W)$ to
  $\widehat C{}^{\bullet}_{\mathrm{def}, \mathrm{lin}} (\mathcal
  W)$. But this immediately follows from \eqref{eq:R_a} and the fact
  that from linearity the right invariant vector field
  $\overrightarrow{\tilde a} \in \mathfrak{X}(\mathcal W)$ is a linear
  vector field on $\mathcal W$ (Proposition \ref{prop:internal}),
  hence it generates a flow $\{\phi_{\varepsilon}^{\tilde a}
  \}_{\varepsilon}$ by vector bundle automorphisms.
\end{proof}

\section{Multiplicative derivations of trivial-core VB-groupoids and algebroids}
\label{sec:trivial-core}

\subsection{VB-groupoids/algebroids and representations}


In this section we discuss the properties of some special class of
VB-groupoids/algebroids.  We have mentioned in the introduction that
VB-groupoids/algebroids are intrinsic models of 2-term representations
up to homotopy (see \cite{gracia2010lie,Gracia-Saz2010}, see also
Section \ref{sec:applications} for more details). For instance, the
isomorphism class of the tangent VB-groupoid/algebroid corresponds to
the isomorphism class of the adjoint representation (up to
homotopy). In this picture, 2-term representations up to homotopy
concentrated in one single degree ($-1$ or $0$) are genuine
representations and correspond to the class of VB-groupoids/algebroids
that we aim to discuss here.
\begin{definition}[Trivial-core VB-groupoid]
  \label{def:SourceTrivialGr}
  A VB-groupoid is \emph{trivial-core} if its core is trivial.
\end{definition}
Trivial-core VB-groupoids are also known as \emph{vacant}. Let
$(E_G\rightrightarrows E; G\rightrightarrows M)$ be a trivial-core
VB-groupoid. In this case, the source $\tilde s : E_G \to E$ is a
regular vector bundle morphism (covering $s : G \to M$). Hence $E_G$
is canonically isomorphic to $s^\ast E$ as a vector bundle over
$G$. In the following we will understand the latter isomorphism and
denote $E_G = s^\ast E$.

\begin{example}
  \label{ex:TrivCoreGrpd}
  Let $G \rightrightarrows M$ be a Lie groupoid and let $E \to M$ be a
  vector bundle equipped with a left representation of $G$. Then there
  is a trivial core VB-groupoid structure $(E_G = s^\ast E
  \rightrightarrows E; G\rightrightarrows M)$. The structure maps of
  the top groupoid $E_G \rightrightarrows E$ are the following:
  \begin{definitionlist}
  \item the source and target maps are given by $\tilde s
    (g, e) = e$ and $\tilde t (g, e) = g.e$, respectively,
  \item the multiplication $\tilde m$ is given by $\tilde m
    ((g, e), (g', e')) = (gg', e')$,
  \item the unit and the inversion are given by $\tilde u (e) =
    (1_{q(e)}, e)$ and $\tilde i (g, e) = (g^{-1}, g.e)$.
  \end{definitionlist} 
  In other words, $E_{G} \rightrightarrows E$ is the \emph{action
    groupoid} corresponding to the action of $G$ on the fibered
  manifold $E \to M$. It is easy to see that every trivial-core
  VB-groupoid arises in this way. In \cite{Gracia-Saz2010}
  trivial-core VB-groupoids $(E_G\rightrightarrows E;
  G\rightrightarrows M)$ correspond to representations of $G$ on $E$,
  seen as representations up to homotopy concentrated in degree 0.
\end{example}

\begin{definition}[Full-core VB-groupoid] 
  A VB-groupoid $(\mathcal W \rightrightarrows E; G \rightrightarrows
  M)$ is \emph{full-core} if its core is $C = \mathcal W |_M$.
\end{definition}
Let $(\mathcal W \rightrightarrows E; G \rightrightarrows M)$ be a
full-core VB-groupoid with core $C$. In this case, $E = 0_M$, the rank
$0$ vector bundle over $M$, and $\mathcal W$ is canonically isomorphic to
$t^\ast C$.

\begin{example}
  \label{def:FullCoreGr}
  Let $G \rightrightarrows M$ be a Lie groupoid and let $q : C \to M$
  be a vector bundle carrying a left representation of $G$. Denote by
  $0_M \to M$ the rank zero vector bundle. Then there is a full-core
  VB-groupoid structure $(t^\ast C \rightrightarrows 0_M;
  G\rightrightarrows M)$. The structure maps of the top groupoid
  $t^\ast C \rightrightarrows 0_M$ are the following:
  \begin{definitionlist}
  \item the source and the target are given by $\tilde s (g, \chi) =
    0_{s(g)}$ and $\tilde t (g, \chi) = 0_{t(g)} = 0_{q(\chi)}$,
  \item the multiplication is given by $\tilde m ((g, \chi), (g',
   \chi')) = (gg', \chi + g.\chi')$,
  \item the unit and the inversion are given by $\tilde u (0_x) =
    (1_{x}, 0_x)$, for all $x \in M$, and $\tilde i (g, \chi) =
    (g^{-1},-g^{-1}.\chi)$.
  \end{definitionlist}
  In other words, $t^\ast C \rightrightarrows 0_M = M$ is the
  \emph{representation groupoid} corresponding to the representation
  $C$ of $G$. Actually, every full-core VB-groupoid arises in this
  way. In \cite{Gracia-Saz2010} full-core VB-groupoids correspond to
  representations of $G$ seen as representations up to homotopy
  concentrated in degree $-1$.
\end{example}
Clearly, full-core VB-groupoids are dual to trivial-core VB-groupoids
and vice-versa. If $(E_G\rightrightarrows E; G\rightrightarrows M)$ is
the trivial-core VB-groupoid corresponding to a left representation of
$G$ on $E$, then the dual full-core VB-groupoid corresponds to the
transpose representation of $G$ on $C = E^\ast$.
Similar definitions exist in the realm of Lie
algebroids. Specifically, we give the following
\begin{definition}[Trivial-core and full-core VB-algebroids]
  A VB-algebroid $(W\Rightarrow E ; A\Rightarrow M)$ is
  \emph{trivial-core} if its core is trivial, and it is
  \emph{full-core} if its core is $C = W|_M$.
\end{definition}
\begin{example}
  \label{ex:st_algebroid}
  Let $A\Rightarrow M$ be a Lie algebroid, denote by $p : A \to M$ the
  projection, and let $q:E \to M$ be a vector bundle carrying a
  representation of $A$. Denote by $\nabla$ the flat $A$-connection in
  $E$ defining the representation. For every section $a \in \Gamma
  (A)$ the operator $\nabla_a$ is a derivation of $E$. Hence, it
  corresponds to a linear vector field on $E$, that we denote by
  $X_a$. The vector field $X_a$ projects onto $\rho (a)$. It is easy
  to see that there is a trivial-core VB-algebroid structure $(E_{A}
  := p^\ast E\Rightarrow E; A\Rightarrow M)$. The structure maps of
  the top Lie algebroid $E_A\Rightarrow E$ are the following:
  \begin{definitionlist}
  \item the projection $\tilde p = p_A : E_A \to E$ is the one
    induced by $p$,
  \item the anchor $\tilde\rho : E_A \to TE$ is given by
    $\tilde\rho (a_x, e) = (X_a)_e$ for all $a \in \Gamma(A)$,
    $x \in M$, and $e\in E_x$,
  \item the Lie bracket $[-,-]: \Gamma (E_A, E) \times \Gamma (E_A, E)
    \to \Gamma (E_A, E)$ is uniquely defined by the anchor and
    \begin{equation}
      [q^\ast a, q^\ast b] = q^\ast [a,b],
    \end{equation}
    for all pull-back sections $q^\ast a, q^\ast b \in \Gamma (E_A,
    E)$, with $a,b \in \Gamma(A)$.
  \end{definitionlist}
  In other words, $E_{A} \Rightarrow E$ is the \emph{action algebroid}
  corresponding to the action of $A$ on the fibered manifold $E \to
  M$. Every trivial-core VB-algebroid arises in this
  way. In~\cite{gracia2010lie} trivial-core VB-algebroids
  $(E_A\Rightarrow E; A\Rightarrow M)$ correspond to representations
  of $A$ on $E$ seen as representations up to homotopy concentrated in
  degree 0.
\end{example}

\begin{example}
  Let $A$ be as in Example \ref{ex:st_algebroid} and let $C \to M$ be
  a vector bundle with a flat $A$-connection $\nabla$. Then there is a
  full-core VB-algebroid structure $(A \times_M C\Rightarrow 0_M;
  A\Rightarrow M)$. The structure maps of the top Lie algebroid $A
  \times_M C \Rightarrow 0_M = M$ are the following:
  \begin{definitionlist}
  \item the projection $\tilde p : A \times_M C \to 0_M$ is
    the obvious one,
  \item the anchor $\tilde\rho : A \times_M C \to TM$ is given
    by $\tilde\rho (a, e) = \rho(a)$, and
  \item the Lie bracket 
  \begin{equation}
  [-,-]: \Gamma (A \times_M C,
    M) \times \Gamma (A \times_M C, M) \to \Gamma (A \times_M C, M)
    \end{equation}
    is given by
    \begin{equation}
      [(a, \chi), (a', \chi')] = ([a,a'], \nabla_a \chi' - \nabla_{a'}\chi)
    \end{equation}
    for all $a, a' \in \Gamma (A)$, and $\chi,\chi' \in \Gamma(C)$.
  \end{definitionlist}
  In other words, $A \times_M C \to M$ is the \emph{representation
  algebroid} corresponding to the representation $C$ of $A$, i.e.~the
    \emph{semidirect product of $A$ and $C$}. All full-core
    VB-algebroids arise in this way. In~\cite{gracia2010lie} they
    correspond to representations of $A$ seen as representations up to
    homotopy concentrated in degree $-1$.
\end{example}
Full-core VB-algebroids are dual to trivial-core VB-algebroids and
vice-versa. If $(E_A\Rightarrow E; A\Rightarrow M)$ is the
trivial-core VB-algebroid corresponding to a representation of $A$ on
$E$, the dual full-core VB-algebroid corresponds to the transpose
representation of $A$ on $C = E^\ast$.

The proof of the following proposition is straightforward.
\begin{proposition}
  \label{prop:st_fc_a} \quad
  \begin{definitionlist}
  \item Let $G$ be a Lie groupoid with Lie algebroid $A$, and let $E,
    C$ be representations of $G$ (hence of $A$). The VB-algebroid of
    the trivial-core (resp.~full-core) VB-groupoid determined by $G$
    and $E$ (resp.~$C$) is the trivial-core (resp.~full-core)
    VB-algebroid corresponding to $A$ and $E$ (resp.~$C$).
  \item Let $A$ be an integrable Lie algebroid with source simply
    connected integration $G$, and let $E, C$ be representations of
    $A$, hence of $G$. The trivial-core (resp.~full-core) VB-algebroid
    determined by $A$ and $E$ (resp.~$C$) is integrable and its source
    simply connected integration is the trivial-core (resp.~full-core)
    VB-groupoid determined by $G$ and $E$ (resp.~$C$).
\end{definitionlist}
\end{proposition}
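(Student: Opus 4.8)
The plan is to reduce both parts to two standard facts: that the Lie functor differentiates an action groupoid to the corresponding action algebroid, and that it is compatible with dualization, as recorded in Subsection~\ref{sec:diff_int}.

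I begin with the trivial-core case of \textit{i.)}. By Example~\ref{ex:TrivCoreGrpd}, the trivial-core VB-groupoid $(E_G = s^\ast E, E; G, M)$ is nothing but the action groupoid $E_G \rightrightarrows E$ of $G$ acting on the fibered manifold $E \to M$, the action being by vector bundle automorphisms. Since the Lie functor sends a VB-groupoid to a VB-algebroid (Subsection~\ref{sec:diff_int}), and since a VB-algebroid is determined by its double vector bundle together with the top Lie algebroid structure $W \to E$, it suffices to identify the Lie algebroid $\mathsf{Lie}(E_G \rightrightarrows E)$. Differentiating the $G$-representation gives the flat $A$-connection $\nabla$ in $E$, i.e.~the corresponding $A$-representation. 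Differentiating the structure maps $\tilde s, \tilde t, \tilde m$ of Example~\ref{ex:TrivCoreGrpd} then exhibits $\mathsf{Lie}(E_G)$ as the action algebroid of $A$ on $E \to M$: its anchor is $\tilde\rho(a_x, e) = X_a|_e$ and its bracket is fixed by $[q^\ast a, q^\ast b] = q^\ast[a,b]$, which are exactly the structure maps of the trivial-core VB-algebroid $E_A = p^\ast E$ of Example~\ref{ex:st_algebroid}.

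The full-core case of \textit{i.)} follows by duality. Indeed, the full-core VB-groupoid $(t^\ast C, 0_M; G, M)$ determined by $G$ and $C$ is the dual VB-groupoid of the trivial-core VB-groupoid $(s^\ast C^\ast, C^\ast; G, M)$ determined by $G$ and the transpose representation $C^\ast$. Combining the compatibility $\mathsf{Lie}(\Omega^\ast, C^\ast; G, M) \simeq (W^\ast, C^\ast; A, M)$ from Subsection~\ref{sec:diff_int} with the trivial-core case already settled, $\mathsf{Lie}(t^\ast C)$ is the dual of the trivial-core VB-algebroid $p^\ast C^\ast$, which is precisely the full-core VB-algebroid determined by $A$ and $C$.

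For \textit{ii.)}, I would run the same construction in reverse, using source simple connectedness. Since $A$ is integrable with source simply connected integration $G$, every representation of $A$ integrates to a unique representation of $G$ on the same vector bundle. Forming $E_G = s^\ast E$, I observe that the source map $\tilde s(g, e) = e$ has source fibers canonically diffeomorphic to those of $G$, so $E_G$ is itself source simply connected. By part \textit{i.)} one has $\mathsf{Lie}(E_G) = E_A$, whence $E_A$ is integrable and, by uniqueness of source simply connected integrations (Subsection~\ref{sec:diff_int}), its source simply connected integration is $E_G$. The full-core statement follows once more by duality, since dualization preserves source simple connectedness (the source fibers of a VB-groupoid being affine, hence contractible, over those of $G$). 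The only genuine work, and thus the main (if modest) obstacle, lies in the explicit matching of the differentiated structure maps with those of Example~\ref{ex:st_algebroid}; the remaining identifications are formal.
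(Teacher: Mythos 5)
Your argument is correct and is essentially the one the paper has in mind: the paper omits the proof, declaring it straightforward, and the intended route is exactly yours — identify the Lie algebroid of the action groupoid $E_G \rightrightarrows E$ with the action algebroid $E_A \to E$ of Example~\ref{ex:st_algebroid}, handle the full-core case via the duality $\mathsf{Lie}(\Omega^\ast, C^\ast; G, M) \simeq (W^\ast, C^\ast; A, M)$ of Subsection~\ref{sec:diff_int}, and obtain \textit{ii.)} from source simple connectedness of $E_G$ (resp.~$t^\ast C$), whose source fibers are vector bundles over those of $G$. Your flagged remaining step, matching the differentiated structure maps with those of Example~\ref{ex:st_algebroid}, is the standard computation and poses no difficulty.
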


\begin{example}
  A VB-groupoid $(\mathcal W \rightrightarrows E; G \rightrightarrows M)$
  (resp.~a VB-algebroid $(W\Rightarrow E; A\Rightarrow M)$) such that
  $\mathcal W$ (resp.~$W$) is a line bundle over $G$ (resp.~$A$) is
  necessarily either trivial-core or full-core.
\end{example}

\subsection{The gauge algebroid of trivial-core/full-core VB-groupoids/algebroids}\label{sec:gauge_trivial}

Let $G \rightrightarrows M$ be a Lie groupoid and let $q : E \to M$ be
a vector bundle carrying a representation of $G$. As discussed in the
previous section, starting with this data we can construct two
VB-groupoids, a trivial-core one $(E_G\rightrightarrows E;
G\rightrightarrows M)$ and a full-core one $(t^\ast E\rightrightarrows
0_M; G\rightrightarrows M)$. Actually, $E_G = s^\ast E \to G$ and
$t^\ast E \to G$ are canonically isomorphic as vector bundles over $G$
(see, e.g., \cite{VW2016}). The vector bundle isomorphism $\tau : E_G
\to t^\ast E$ is given by
\begin{equation}
    \tau (g, e) = (g, g.e),
\end{equation}
for all $(g,e) \in G \mathbin{{}_{s}  \times_q} E = E_G$. Hence the gauge
algebroids $\der E_G$ and $\der t^\ast E$ are canonically isomorphic as Lie
algebroids over $G$.

In this section, we show that the gauge algebroid $\der E_G$ fits
itself in a VB-groupoid $(\der E_G\rightrightarrows \der E;
G\rightrightarrows M)$ (Proposition \ref{prop:diff_VB-grpd}). Using
the isomorphism $\tau$ we immediately get that $\der t^\ast E$ fits in
a VB-groupoid as well. In other words, the situation is basically the
same for trivial-core and full-core VB-groupoids, exactly as one would
expect from the fact that they are both equivalent to the same object:
a representation of a Lie groupoid.
\begin{proposition}
  \label{prop:diff_VB-grpd}
  There is a natural VB-groupoid structure $(\der E_G\rightrightarrows
  \der E; G\rightrightarrows M)$ and a short exact sequence of
  VB-groupoids:
  \begin{equation}
    \label{eq:ses_DE_G}
    \begin{gathered}
      \begin{tikzcd}[row sep=1em, column sep=.7em]
        0\arrow[rrr] &&& \mathsf{End}E_G\arrow[rrr]\arrow[ddl]\arrow[dr, shift left]\arrow[dr, shift right] &&& \der E_G\arrow[rrr, "\sigma"]\arrow[ddl]\arrow[dr, shift left]\arrow[dr, shift right] &&& TG\arrow[rrr]\arrow[ddl]\arrow[dr, shift left]\arrow[dr, shift right] &&& 0 &
        \\
        & 0\arrow[rrr, crossing over] &&& \mathsf{End}E\arrow[rrr, crossing over] &&& \der E\arrow[rrr, crossing over, "\sigma", pos=.35] &&& TM\arrow[rrr, crossing over] &&& 0
        \\
        && G\arrow[rrr, -, shift left=.27ex]\arrow[rrr, -, shift right=.27ex]\arrow[dr, shift left]\arrow[dr, shift right] &&& G\arrow[rrr, -, shift left=.27ex]\arrow[rrr, -, shift right=.27ex]\arrow[dr, shift left]\arrow[dr, shift right] &&& G\arrow[dr, shift left]\arrow[dr, shift right] &&&&&
        \\
        &&& M\arrow[rrr, -, shift left=.27ex]\arrow[rrr, -, shift right=.27ex]\arrow[from=uur, crossing over] &&& M\arrow[rrr, -, shift left=.27ex]\arrow[rrr, -, shift right=.27ex]\arrow[from=uur, crossing over] &&& M\arrow[from=uur, crossing over] &&&&\\
    \end{tikzcd}
  \end{gathered}
  \end{equation}
  Here $(\mathsf{End}E_G \rightrightarrows \mathsf{End} E;
  G\rightrightarrows M)$ is the trivial-core VB-groupoid corresponding
  to the induced action of $G$ on $\mathsf{End} E$.
\end{proposition}

\begin{proof}
  \label{proof:diff_groupoid_structure}
  \color{black}
  Denote by 
  \begin{equation}
    \mathsf a : G \mathbin{{}_{s} \times_q} E = E_G \to E, \quad (g, e) \mapsto \mathsf a_g e = g.e = \tilde t (g,e),
  \end{equation}
  the action of $G$ on $E$.  We begin with a remark (for which we
  thank the referee): \emph{the action $\mathsf a$ induces an action
    $\mathcal A$ of the tangent groupoid $TG \rightrightarrows TM$ on
    $\sigma : \der E \to TM$}. To see this, let $g \in G$, let $V \in
  T_g G$, and let $\Delta \in \der_{s(g)} E$ be such that:
  \begin{equation}
    \mathrm{d} s (V) = \sigma (\Delta).
  \end{equation}
  Choose a curve $\varepsilon \mapsto \Gamma (\varepsilon)$ in $G$ such that
  $\Gamma (0) = g$ and
  \begin{equation}
    V = \left.\frac{d}{d\varepsilon}\right|_{\varepsilon = 0} \Gamma (\varepsilon),
  \end{equation}
  and denote $\gamma = s \circ \Gamma$. Choose also a curve $\varepsilon
  \mapsto \gamma_E(\varepsilon) : E_{s(g)} \to E_{\gamma (\varepsilon)}$ of
  isomorphisms such that
  \begin{equation}
    \Delta = \left.\frac{d}{d\varepsilon}\right|_{\varepsilon = 0} \gamma_E (\varepsilon).
  \end{equation}
  As the symbol of $\Delta$ is exactly $ds (V)$ this is possible. Now,
  put
  \begin{equation}
    \label{eq:A_V_Delta}
    \mathcal A (V, \Delta) 
    :=  
    \left.\frac{d}{d\varepsilon}\right|_{\varepsilon = 0} \mathsf a_{\Gamma (\varepsilon)} \circ \gamma_E (\varepsilon) \circ \mathsf a_{g^{-1}}.
  \end{equation}
  It immediately follows that since $\mathsf a$ is an action,
   $\mathcal A$ is an action as well, as claimed.
  
  Now, recall from Section \ref{sec:derivations} that the gauge
  algebroid $\der E_N$ of the pull-back vector bundle $E_N = \phi^\ast
  E \to N$ along a smooth map $\phi : N \to M$, is (canonically
  isomorphic to) the pull-back $ \der E_N \cong TN \mathbin{{}_{\D
      \phi} \hspace*{-0.1cm}\times_\sigma} \der E.  $ In particular,
  \begin{equation}
    \der E_G \cong TG \mathbin{{}_{\D s} \hspace*{-0.1cm}\times_\sigma}  \der E.
  \end{equation}
  The action of $TG \rightrightarrows \der E$ then gives $\der E_G$
  the structure of an action groupoid $\der E_G \rightrightarrows \der
  E$. It is now easy to check that with all its structure maps,
  $(\der E_G \rightrightarrows \der E; G\rightrightarrows M)$ is
  indeed a VB-groupoid fitting in the exact sequence
  \eqref{eq:ses_DE_G}. We leave details to the reader (see also Remark
  \ref{rem:D_struct}).
 \end{proof}
 
 \color{black}
 \begin{remark}\label{rem:D_struct}
   The structure maps of the groupoid $\der E_G \rightrightarrows \der
   E$ are those of the action groupoid. For instance, the source
   $s_{\der} : \der E_G \to \der E$ is just the projection of $\der
   E_G = TG \mathbin{{}_{\D s} \hspace*{-0.1cm}\times_\sigma} \der E$ onto the second
   factor. In other words $s_{\der} = \der \tilde s$. The target
   $t_{\der} : \der E_G \to \der E$ is just the action map $\mathcal
   A : \der E_G = TG \mathbin{{}_{\D s} \hspace*{-0.1cm}\times_\sigma} \der E \to \der
   E$, and a direct check exploiting (\ref{eq:der_phi_vel}) reveals
   that $t_\der = \der \tilde t$. It is easy to check that the other
   structure maps (multiplication $m_\der$, unit $u_\der$, invertion
   $i_\der$) are all induced by those of the VB-groupoid $(E_G
   \rightrightarrows E; G \rightrightarrows M)$ via application of the
   functor $\der$. The multiplication may need a little
   explanation. Consider the space $E_G^{(2)} = E_G
   \mathbin{{}_{\tilde s} \times_{\tilde t}} E_G$ of composable arrows
   of $E_G \rightrightarrows E$. It is a vector bundle over $G^{(2)}$
   in the obvious way, and the projections
   $\tilde{\operatorname{pr}}_i : E_G^{(2)} \to E_G$ over the factors
   are regular vector bundle morphisms covering the projections
   $\operatorname{pr}_i : G^{(2)} \to G$, $i = 1,2$. Similarly, the
   multiplication $\tilde m : E_G^{(2)} \to E_G$ is a regular vector
   bundle morphism covering $m : G^{(2)} \to G$. The space of
   composable arrows of $\der E_G \rightrightarrows \der E$ is
   \begin{equation}
     (\der E_G)^{(2)} = \der E_G \mathbin{{}_{\der \tilde s} \times_{\der \tilde t}}  \der E_G,
   \end{equation}
   and it is a vector bundle over $G^{(2)}$ as well. Finally, the map
   \begin{equation}
     \der (E_G^{(2)}) \to (\der E_G)^{(2)}, \quad \square
     \mapsto (\der \tilde{\operatorname{pr}}_1 (\square), \der \tilde{\operatorname{pr}}_2 (\square))
   \end{equation}
   is an isomorphism of vector bundles over $G^{(2)}$. Under this
   identification, $m_\der = \der \tilde m$. Similarly, $u_\der = \der
   \tilde u$ and $i_\der = \der \tilde i$. This shows a close analogy
   between the groupoid $\der E_G \rightrightarrows \der E$ and the
   tangent groupoid $TG \rightrightarrows TM$.
 \end{remark}
 \color{black}
\begin{remark}
  \label{rem:core_DE_G}
  The core $C$ of the VB-groupoid $(\der E_G \rightrightarrows \der E;
  G \rightrightarrows M)$ is canonically isomorphic to $A$. The
  isomorphism $A \to C$ maps $a$ to $\mathbb D_a$, where $\mathbb D$
  is the obvious flat $(\ker \D s)$-connection in $E_G$.
\end{remark}

A similar situation occurs for trivial-core/full-core VB algebroids:
let $A \Rightarrow M$ be a Lie algebroid with projection $p: A \to M$,
and let $E \to M$ be a vector bundle carrying a representation of $A$.
We denote by $\nabla$ the flat $A$-connection in $E$. Beware that
from now on $\der E_A$ will always indicate the gauge algebroid of
the vector bundle $E_A \to A$ (\emph{not} the vector bundle $E_A \to
E$).
We will now show that $\der E_A$ fits in a VB-algebroid $(\der
E_A\Rightarrow \der E; A\Rightarrow M)$ (Proposition
\ref{prop:diff_VB-alg}). Finally, if $(E_A\Rightarrow E; A\Rightarrow
M)$ is the VB-algebroid of the VB-groupoid $(E_G\rightrightarrows
E;G\rightrightarrows M)$ then $(\der E_A\Rightarrow \der E;
A\Rightarrow M)$ is the VB-algebroid of $(\der E_G\rightrightarrows
\der E; G\rightrightarrows M)$ (Proposition \ref{prop:diff_int}).
\begin{proposition}
  \label{prop:diff_VB-alg}
  There is a natural VB-algebroid structure $(\der E_A \Rightarrow
  \der E; A \Rightarrow M)$ and a short exact sequence of
  VB-algebroids:
  \begin{equation}
    \label{eq:ses_DE_A}
    \begin{gathered}
      \begin{tikzcd}[row sep=1em, column sep=.7em]
        0\arrow[rrr] &&& \mathsf{End}E_A\arrow[rrr]\arrow[ddl]\arrow[dr, Rightarrow] &&& \der E_A\arrow[rrr, "\sigma"]\arrow[ddl]\arrow[dr, Rightarrow] &&& TA\arrow[rrr]\arrow[ddl]\arrow[dr, Rightarrow] &&& 0 &
        \\
        & 0\arrow[rrr, crossing over] &&& \mathsf{End}E\arrow[rrr, crossing over] &&& \der E\arrow[rrr, crossing over, "\sigma", pos=.35] &&& TM\arrow[rrr, crossing over] &&& 0
        \\
        && A\arrow[rrr, -, shift left=.27ex]\arrow[rrr, -, shift right=.27ex]\arrow[dr, Rightarrow] &&& A\arrow[rrr, -, shift left=.27ex]\arrow[rrr, -, shift right=.27ex]\arrow[dr, Rightarrow] &&& A\arrow[dr, Rightarrow] &&&&&
        \\
        &&& M\arrow[rrr, -, shift left=.27ex]\arrow[rrr, -, shift right=.27ex]\arrow[from=uur, crossing over] &&& M\arrow[rrr, -, shift left=.27ex]\arrow[rrr, -, shift right=.27ex]\arrow[from=uur, crossing over] &&& M\arrow[from=uur, crossing over] &&&&\\
      \end{tikzcd}
    \end{gathered}
  \end{equation}
  Here $(\mathsf{End}E_A \Rightarrow \mathsf{End} E; A\Rightarrow M)$ is the trivial-core
  VB-algebroid corresponding to the induced action of $A$ on $\mathsf{End} E$.
\end{proposition}

We propose a proof in the same spirit of that of Proposition
\ref{prop:diff_VB-grpd}. We conjecture that there exists a proof
exploiting the fact that $\der E_A$ is the fat algebroid of the
VB-algebroid $(T E_A \Rightarrow E_A; T A \Rightarrow A)$. Actually,
$T E_A$ is a triple structure: it is a Lie algebroid over $E_A$, over
$TA$, and over $TE$, and all these structures are compatible. We
expect the conjectured proof to require a preliminary analysis of the
fat algebroid of a triple structure like that but this goes beyond the
scopes of the present paper.

\color{black}
\begin{proof} 
  We begin remarking that similarly as in the groupoid case
  \emph{the flat connection $\nabla$ induces an (infinitesimal) action
    \begin{equation}
      \mathfrak A : \Gamma (TA, TM) \to \mathfrak X ( \der E)
    \end{equation}
    of the tangent algebroid $TA \Rightarrow TM$ on $\sigma : \der E
    \to TM$}.  To see this, recall from Example \ref{ex:tangent}, that
  sections of $TA \to TM$ are generated by core sections $\widehat
  a_T$, and linear sections of the form $\D a$, for some $a \in
  \Gamma(A)$.  Now, every section $a$ of $A$ determines two vector
  fields on $\der E$ as follows.  Correspondence $\Delta \mapsto
  [\nabla_a,\Delta]$ is a derivation of $\der E$.  Hence, it
  corresponds to a (linear) vector field $Y_a$ on (the total space of)
  $\der E$ itself.  Section $a$ does also determine another vector
  field on $\der E$: the vertical lift $V_a$ of $\nabla_a \in
  \Gamma(\der E)$.  We put %
  \begin{equation}
    \label{eq:infinitesimal_acton}
    \mathfrak A \left(\D a \right)
    =
    Y_a \quad \text{and} \quad \mathfrak A ( \widehat a_T ) = V_a,
  \end{equation}
  for all $a \in \Gamma (A)$.  It is easy to see that this uniquely
  defines an infinitesimal action as required.

  Now, the gauge algebroid $\der E_A$ is the pull-back $\der E_A \cong
  TA \mathbin{{}_{\D p} \hspace*{-0.1cm}\times_\sigma} \der E$.  The infinitesimal
  action $\mathfrak A$ then gives $\der E_A$ the structure of an
  action algebroid $\der E_A \Rightarrow \der E$.  It is easy to check
  that $(\der E_A \Rightarrow \der E;
  A\Rightarrow M)$ with all its structure maps is indeed a VB-groupoid fitting in the exact
  sequence (\ref{eq:ses_DE_A}).  We leave details to the reader (see
  also Remark \ref{rem:der_E_A_D}).
\end{proof}

\begin{remark}\label{rem:der_E_A_D}
  The structure maps of the algebroid $\der E_A \Rightarrow \der E$
  are those of the action algebroid. For instance, the projection
  $p_\der : \der E_A \to \der E$ is just the projection of $\der E_A =
  TA \mathbin{{}_{\D p} \hspace*{-0.1cm}\times_\sigma} \der E$ onto the second
  factor. In other words $p_\der = \der p$. In order to describe the
  anchor $\rho_\der : \der E_A \to T \der E$ first notice that
  sections of $\der E_A \to \der E$ are generated by pull-backs of
  sections of $TA \to TM$. The pull-back of a core section $\widehat
  a_T$ is a core section that we denote by $\widehat a_\der$, while
  the pull-back of a linear section of the form $\D a$ is simply $\der
  a$, $a \in \Gamma (A)$. Hence the anchor $\rho_\der : \der E_A \to T
  \der E$ is uniquely determined by
  \begin{equation}
    \label{eq:anchor}
    \rho_\der \left(\der a \right) 
    = 
    Y_a \quad \text{and} \quad \rho_\der ( \widehat a_\der ) = V_a,
  \end{equation}
  for all $a \in \Gamma (A)$. Finally,
  \begin{equation}
    \label{eq:brackets}
    \begin{aligned}
      \left[\der a, \der {b} \right] & = \der {[a,b]},\\
           [ \der a, \widehat b_\der ] & = \widehat{[a,b]}_\der,\\
           [\widehat a_\der, \widehat{b}_\der ] & = 0,
    \end{aligned}
  \end{equation}
  for all $a, b \in \Gamma(A)$.
\end{remark}

\begin{remark}
  \label{rem:core_DE_A}
  The core $C$ of the VB-algebroid $(\der E_A \Rightarrow \der E; A
  \Rightarrow M)$ is canonically isomorphic to $A$. The isomorphism $A
  \to C$ maps $a$ to $\mathbb D_a$, where $\mathbb D$ is the flat
  $(\ker \D p)$-connection in $E_A$.
\end{remark}

\color{black} 

\begin{proposition}
  \label{prop:diff_int}
  Let $(E_G\rightrightarrows E; G\rightrightarrows M)$ be a
  trivial-core VB-groupoid, and let $(E_A \Rightarrow E; A\Rightarrow
  M)$ be its trivial-core VB-algebroid (equivalently, let $(t^\ast E
  \rightrightarrows 0_M; G \rightrightarrows M)$ be a full-core
  VB-groupoid and let $(A \times_M E \Rightarrow 0_M; A \Rightarrow
  M)$ be its full-core VB-algebroid). Then $(\der E_A \Rightarrow \der
  E; A \Rightarrow M)$ is canonically isomorphic to the VB-algebroid
  of the VB-groupoid $(\der E_G\rightrightarrows \der E;
  G\rightrightarrows M)$.
\end{proposition}

\begin{proof}
  \color{black} It is enough to check that the Lie functor intertwines
  the action $\mathcal A $ in the proof of Proposition
  \ref{prop:diff_VB-grpd} and the infinitesimal action $\mathfrak A$
  in the proof of Proposition \ref{prop:diff_VB-alg}. To do this,
  denote by
  \begin{equation}
  \dot{\mathcal A} : \Gamma (TA, TM) \to \mathfrak X (\der E)
  \end{equation}
  the infinitesimal action induced by $\mathcal A$. This means that
  for every section $w \in \Gamma (TA, TM)$, every $x \in M$ and
  every point $\Delta \in \der_x E$
  \begin{equation}\label{eq:inf_act}
    \dot{\mathcal A} (w)_\Delta 
    := 
    \left. \frac{d}{d \lambda} \right|_{\lambda = 0} \mathcal A (\phi^{w}_{\lambda} (v), \Delta)
  \end{equation}
  where $\{\phi^w_\lambda \}$ is the flow of $\overrightarrow w$, the
  right invariant vector field on $TG$ corresponding to $w$, $v =
  \sigma (\Delta)$, and we identify $v$ with $1_v \in TG$.

  Now, we compute (\ref{eq:inf_act}) in the cases $w = \widehat a_T$
  and $w = \D a$, for some $a \in \Gamma (A)$. We begin with the first
  one. For every $x \in M$, and every $v \in T_x M$, we have (see
  \cite{Mackenzie2005})
  \begin{equation}
  \phi^{\widehat a_T}_{\lambda} (v) = v + \lambda a_{x}. 
  \end{equation}
  Hence
  \begin{equation}
  \begin{aligned}
    \dot{\mathcal A} (\widehat a_T)_\Delta & = \left. \frac{d}{d \lambda} \right|_{\lambda = 0} \mathcal A (v + \lambda a_x, \Delta) 
    = 
    \left. \frac{d}{d \lambda} \right|_{\lambda = 0} \mathcal A (v + \lambda a_x, \Delta + 0) 
    = 
    \left. \frac{d}{d \lambda} \right|_{\lambda = 0} \mathcal A (v, \Delta) + \lambda \mathcal A (a_x, 0) 
    \\
    & = 
    \left. \frac{d}{d \lambda} \right|_{\lambda = 0}  \Delta + \lambda \mathcal A (a_x, 0),
  \end{aligned} 
  \end{equation}
  where we used the (easy to check) fact that $\mathcal A :TG
  \mathbin{{}_{\D s} \hspace*{-0.1cm}\times_\sigma} \der E \to \der E$ is a vector
  bundle morphism covering the target $t : G \to M$. Now, for every
  section $e \in \Gamma (E)$,
  \begin{equation}\label{eq:nabla_a_x_e}
    \nabla_{a_x} e = \left. \frac{d}{d \varepsilon} \right|_{\varepsilon = 0} \mathsf  a_{\phi^a_\varepsilon (x)} e,  
  \end{equation}
  where $\{ \phi^a_\varepsilon \}$ is the flow of $\overrightarrow a$,
  and a direct computation exploiting (\ref{eq:nabla_a_x_e}) and
  (\ref{eq:A_V_Delta}) then shows that $\mathcal A (a_x, 0) =
  \nabla_{a_x}$. We conclude that
  \begin{equation}
  \dot{\mathcal A} (\widehat a_T)_\Delta
  = 
  \left. \frac{d}{d \lambda} \right|_{\lambda = 0}  \Delta + \lambda \nabla_{a_x}
  = 
  (V_a)_\Delta = \mathfrak A (\widehat a_T)_\Delta.
  \end{equation}

  Finally, let $w = \D a$. We have
  \begin{equation}
    \phi_\lambda^{\D a} = \D \phi_\lambda^a,
  \end{equation}
  where, as usual, $\{ \phi^a_\lambda \}$ is the flow of $\overrightarrow a$. Hence
  \begin{equation}
    \dot{\mathcal A} (\D a)_\Delta := \left. \frac{d}{d \lambda} \right|_{\lambda = 0} \mathcal A (\D \phi^{a}_{\lambda} (v), \Delta).
  \end{equation}
  Choose a curve $\varepsilon \mapsto \gamma (\varepsilon)$ in $G$ such that
  $\gamma (0) = x$, and
  \begin{equation}
    v = \left.\frac{d}{d\varepsilon}\right|_{\varepsilon = 0} \gamma (\varepsilon),
  \end{equation}
  and a curve $\varepsilon \mapsto \gamma_E (\varepsilon) : E_{x} \to E_{\gamma
    (\varepsilon)}$ of isomorphisms such that
  \begin{equation}
    \Delta = \left.\frac{d}{d\varepsilon}\right|_{\varepsilon = 0} \gamma_E (\varepsilon).
  \end{equation}
  Then 
  \begin{equation}
    \D\phi^{a}_{\lambda} (v) = \left.\frac{d}{d\varepsilon}\right|_{\varepsilon = 0} \phi^{a}_{\lambda} (\gamma (\varepsilon)),
  \end{equation}
  and from (\ref{eq:A_V_Delta}),
  \begin{equation}
    \mathcal A (\D \phi^{a}_{\lambda} (v), \Delta) 
    = 
    \left.\frac{d}{d\varepsilon}\right|_{\varepsilon = 0} \mathsf a_{\phi^a_{\lambda} (\gamma (\varepsilon))}
  \circ \gamma_E (\varepsilon) \circ \mathsf a_{\phi^a_\lambda (x)^{-1}} = \der \Phi^a_\lambda (\Delta),
  \end{equation}
  where $\Phi^a_\lambda$ is the vector bundle isomorphism
  \begin{equation}
    \Phi^a_\lambda : E \to E, \quad e \mapsto \mathsf a_{\phi_\lambda^a (q(e))} e.
  \end{equation}
  This shows that the vector field $\dot{\mathcal A} (\D a) \in
  \mathfrak X (\der E)$ generates the flow $\{ \der \Phi^a_\lambda
  \}$. Notice that $\{ \Phi^a_\lambda \}$ is the flow of
  $\nabla_a$. It now easily follows that $\{ \der
  \Phi^a_\lambda \}$ is the flow of the adjoint operator $[\nabla_a,
    -]$. In other words
  \begin{equation}
    \dot{\mathcal A} (\D a) = Y_a = \mathfrak A (\D a).
  \end{equation}
  This concludes the proof.
\end{proof}

\subsection{Multiplicative derivations as multiplicative sections}

Recall that a vector field $X$ on a Lie groupoid $G \rightrightarrows
M$ is multiplicative if and only if, when regarded as a section $X : G
\to TG$ of $TG \to G$, it is a morphism of Lie groupoids. A similar
statement holds true for IM vector fields on Lie algebroids. In this
section we prove similar results for multiplicative derivations of
trivial-core and full-core VB-groupoids and IM derivations of
trivial-core and full-core \color{black} VB-algebroids.
%

\begin{theorem}
  \label{theor:der_VB-grpd}
  Let $(E_G\rightrightarrows E; G\rightrightarrows M)$ be a
  trivial-core VB-groupoid. A derivation of $E_G \to G$ is
  multiplicative if and only if it is a multiplicative section of
  $\der E_G \to G$.
\end{theorem}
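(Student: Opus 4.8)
The plan is to show that both notions---``multiplicative derivation of $E_G\to G$'' and ``multiplicative section of $\der E_G\to G$''---are equivalent to one and the same pair of conditions on a derivation $\Delta$ of $E_G\to G$, which I regard as a section $\Delta:G\to\der E_G$ with symbol $X=\sigma(\Delta)\in\mathfrak X(G)$: namely (i) there is a derivation $\delta_E$ of $E$ with $\delta_E=\tilde s_\ast\Delta=\tilde t_\ast\Delta$ (i.e.\ $\Delta$ is both $\tilde s$- and $\tilde t$-related to a single $\delta_E$), and (ii) $X$ is a multiplicative vector field on $G$. The key technical input throughout is Remark~\ref{rem:theor:der_VB-grpd}: since $E_G=s^\ast E$, a derivation $\Delta_g\in(\der E_G)_g$ is completely determined by its source projection $s_\der(\Delta_g)=\der\tilde s(\Delta_g)\in(\der E)_{s(g)}$ together with its symbol $\sigma(\Delta_g)\in T_gG$.

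First I would treat the section side. By the definition recalled in Remark~\ref{rem:mult_vf}, $\Delta$ is a multiplicative section iff $\Delta:G\to\der E_G$ is a morphism of the Lie groupoids $G\rightrightarrows M$ and $\der E_G\rightrightarrows\der E$. The source and target compatibilities of such a morphism say precisely that $s_\der\circ\Delta$ and $t_\der\circ\Delta$ factor through a common section $\delta_E:M\to\der E$ via $s$ and $t$, which is exactly condition (i). Granting (i), the multiplicativity condition $\Delta_{gh}=m_\der(\Delta_g,\Delta_h)$ is handled by the determination principle: both sides lie in $(\der E_G)_{gh}$, and their source projections agree automatically (both equal $(\delta_E)_{s(h)}$, by the groupoid identity $s_\der\circ m_\der=s_\der\circ\mathrm{pr}_2$ together with (i)); so by Remark~\ref{rem:theor:der_VB-grpd} equality holds iff the symbols agree. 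Since the symbol map $\sigma:\der E_G\to TG$ is, by functoriality, a groupoid morphism to the tangent groupoid and hence intertwines $m_\der$ with $dm$, this reduces to $X_{gh}=dm(X_g,X_h)$; combined with the $s$- and $t$-projectability of $X$ forced by (i), this is exactly multiplicativity of $X$, i.e.\ (ii). Thus multiplicative section $\iff$ (i) and (ii).

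Next I would treat the derivation side. The forward implication is immediate from Remark~\ref{rem:inf_automorphism}: if $\Delta$ is multiplicative then $\tilde s_\ast\Delta=\tilde t_\ast\Delta=\delta_E$ for a derivation $\delta_E$ of $E$ and $\sigma(\Delta)=X$ is multiplicative, so (i) and (ii) hold. For the converse I would integrate. The linear vector field $X_\Delta$ on $E_G$ is automatically an infinitesimal automorphism of the vector bundle $E_G\to G$, so it suffices to show its flow $\{\Phi_\epsilon\}$ consists of morphisms of the Lie groupoid $E_G\rightrightarrows E$. Condition (ii) makes the flow $\{\phi_\epsilon\}$ of $X$ consist of groupoid automorphisms of $G$, and condition (i) makes $X_\Delta$ both $\tilde s$- and $\tilde t$-related to $X_{\delta_E}$, whose flow I call $\{\psi_\epsilon\}$; source-relatedness and the covering property then force $\Phi_\epsilon(g,e)=(\phi_\epsilon(g),\psi_\epsilon(e))$ in the action-groupoid model $E_G\rightrightarrows E$ of Example~\ref{ex:TrivCoreGrpd}. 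A direct check against $\tilde m((g,e),(g',e'))=(gg',e')$---using that $\phi_\epsilon$ is a groupoid automorphism and that $\psi_\epsilon$ is $\phi_\epsilon$-equivariant (the $\tilde t$-relatedness in (i), which is exactly what makes the images composable)---shows each $\Phi_\epsilon$ respects source, target and multiplication, hence is a VB-groupoid automorphism. Therefore multiplicative derivation $\iff$ (i) and (ii), and comparison with the section side finishes the proof.

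I expect the main obstacle to be the second paragraph: collapsing the abstract multiplicativity equation $\Delta_{gh}=m_\der(\Delta_g,\Delta_h)$---whose right-hand side is defined through the rather involved map $m_\der$ of Proposition~\ref{prop:diff_VB-grpd}---into a single condition on the symbol. This is precisely where the determination principle of Remark~\ref{rem:theor:der_VB-grpd} is indispensable, and some care is needed to verify that $s_\der$ and $\sigma$ both intertwine $m_\der$ with the corresponding operations on $\der E$ and $TG$, so that the source components match for free and only the symbol condition survives.
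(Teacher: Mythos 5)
Your proof is correct, and it reaches the theorem by a route that is genuinely different from the paper's, although both hinge on the same determination principle of Remark~\ref{rem:theor:der_VB-grpd} (a point of $\der E_G$ over $g$ is pinned down by its $s_\der$-image and its symbol, so the multiplication compatibility collapses to a condition on symbols once the source components match). The paper never introduces your intermediate pair of conditions and never integrates anything: it replaces ``multiplicative derivation'' by ``$X_\Delta$ is a multiplicative section of $TE_G\to E_G$'' via the Mackenzie--Xu characterization of Remark~\ref{rem:mult_vf}, and then passes to the \emph{dual}, translating that statement condition-by-condition into ``$\Delta^\ast$ is a multiplicative section of $\der E^\ast_G\to G$''; the dualization is natural there because $X_\Delta$ acts on fiber-wise linear functions, i.e.\ on $\Gamma(E_G^\ast)$, so the structure maps of $\der E^\ast_G$ appear of their own accord, and the VB-groupoid isomorphism $\der E_G\simeq\der E^\ast_G$, $\Delta\mapsto\Delta^\ast$, closes the loop. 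You instead stay on the un-dualized side, characterize both notions by your conditions (i)--(ii), and for the implication from (i)--(ii) back to multiplicativity of the derivation you integrate explicitly in the action-groupoid model of Example~\ref{ex:TrivCoreGrpd}, with $\Phi_\epsilon=(\phi_\epsilon,\psi_\epsilon)$ and the $\phi_\epsilon$-equivariance of $\psi_\epsilon$ doing the work. Your route buys concreteness and independence from the duality isomorphism; the paper's buys a purely infinitesimal argument with no flows and no intermediate characterization. One point you should make explicit rather than attribute to ``functoriality'': the identities $s_\der\circ m_\der=s_\der\circ\pr_2$ and $\sigma\circ m_\der=\D m\circ(\sigma\times\sigma)$ that you use to match source components and reduce to symbols are precisely part of the assertion that \eqref{eq:ses_DE_G} is a short exact sequence of VB-groupoids, i.e.\ they are supplied by Proposition~\ref{prop:diff_VB-grpd} (and for the second one a short check that $\varphi^{-1}$ is compatible with symbols is needed), not by abstract functoriality of $\der$ alone.
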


\begin{proof}
  Let $G\rightrightarrows M$ be a Lie groupoid, and let $E\to M$ be a
  vector bundle equipped with a representation of $G$.  Then $(E_G
  \rightrightarrows E;G \rightrightarrows M)$ is a trivial-core
  VB-groupoid with structure maps $\tilde s,\tilde t, \tilde m, \tilde
  u, \tilde i$ as in Example~\ref{ex:TrivCoreGrpd}.  Similarly the
  dual representation of $G$ on $E^\ast\to M$ determines the
  trivial-core VB-groupoid $(E^\ast_G \rightrightarrows E^\ast;G
  \rightrightarrows M)$ with structure maps denoted by $\tilde
  s^\star,\tilde t^\star, \tilde m^\star, \tilde u^\star, \tilde
  i^\star$.  Applying Proposition~\ref{prop:diff_VB-grpd} to both
  $(E_G\rightrightarrows E;G\rightrightarrows M)$ and
  $(E^\ast_G\rightrightarrows E^\ast;G\rightrightarrows M)$, we get
  VB-groupoids $(\der E_G\rightrightarrows \der E;G\rightrightarrows
  M)$ and $(\der E^\ast_G\rightrightarrows \der
  E^\ast;G\rightrightarrows M)$ with structure maps denoted by
  $s_\der, t_\der, m_\der, u_\der, i_\der$ and $s^\star_\der,
  t^\star_\der, m^\star_\der, u^\star_\der, i^\star_\der$
  respectively.  It is straightforward to see that isomorphism $\der
  E_G\to\der E^\ast_G,\ \Delta\mapsto\Delta^\ast$ (see
  Lemma~\ref{lem:dual}) is actually a VB-groupoid isomorphism $(\der
  E_G\rightrightarrows \der E;G\rightrightarrows M) \to (\der
  E^\ast_G\rightrightarrows \der E^\ast;G\rightrightarrows M)$
  (covering isomorphism $\der E \to \der E^\ast$).  As a consequence,
  a derivation $\Delta\in\Gamma(\der E_G)$ is a multiplicative section
  of $\der E_G \to \der E$ if and only if its ``dual'' derivation
  $\Delta^\ast\in\Gamma(\der E^\ast_G)$ is a multiplicative section of
  $\der E^\ast_G \to \der E^\ast$.  So it remains to check that the
  following conditions are equivalent:
  \begin{definitionlist}
  \item\label{enumitem:vector_field}
    $X_\Delta$ is a multiplicative section of $T E_G \to E_G$,
  \item\label{enumitem:derivation}
    $\Delta^\ast$ is a multiplicative section of $\der E^\ast_G \to G$.
  \end{definitionlist}
  First show that $\textit{i.)}
  \Rightarrow\textit{ii.)}$. Condition~\textit{i.)} means that there
  is a (necessarily unique) vector field $\underline{\smash{X}}\in\mathfrak X(E)$
  such that
  \begin{equation}
    \label{eq:proof:theor:der_VB-grpd_1a}
    \D\tilde s (X_\Delta)_{(g,e)}=\underline{\smash{X}}_e,\qquad
    \D\tilde t (X_\Delta)_{(g,e)}=\underline{\smash{X}}_{g.e},
  \end{equation}
  for all $(g,e)\in E_G$, and, additionally, for every pair of
  composable arrows $((g,e),(g',e'))\in(E_G)^{(2)}$, we have
  \begin{equation}
    \label{eq:proof:theor:der_VB-grpd_2a}
    (X_\Delta)_{(gg',e')}=d\tilde m((X_\Delta)_{(g,e)},(X_\Delta)_{(g',e')}).
  \end{equation}
  In this case, it follows from~\eqref{eq:proof:theor:der_VB-grpd_1a},
  that $\underline{\smash{X}}$ is also a linear vector field,
  i.e.~$\underline{\smash{X}}=X_{\underline{\smash{\Delta}}}$ for some
  (necessarily unique) derivation
  $\underline{\smash{\Delta}}\in\Gamma(\der E)$.  Hence we can
  rephrase~\eqref{eq:proof:theor:der_VB-grpd_1a} in the following
  equivalent way
  \begin{equation}
    \label{eq:proof:theor:der_VB-grpd_1b}
    s^\star_\der \Delta^\ast_g=\underline{\smash{\Delta}}^\ast_{s(g)},\qquad
    t^\star_\der \Delta^\ast_g=\underline{\smash{\Delta}}^\ast_{t(g)},
  \end{equation}
  for all $g\in G$.  
  assume the first one of~\eqref{eq:proof:theor:der_VB-grpd_1a}, then
  Equation~\eqref{eq:proof:theor:der_VB-grpd_2a} holds if and only if
  its sides have the same image through both $\D \tilde s$ and the
  tangent maps to the projections $q_G:E_G\to G$. The first condition
  is guaranteed by the first one
  of~\eqref{eq:proof:theor:der_VB-grpd_1a}. As $X_\Delta$ is
  $q_G$-related to $\sigma(\Delta)=\sigma(\Delta^\ast)$, in view
  of~\eqref{eq:ses_DE_G} the second condition is equivalent to
  \begin{equation}\label{eq:proof}
    \sigma(\Delta^\ast_{gg'})=\sigma(m_\der^\star(\Delta^\ast_g,\Delta^\ast_{g'})),
  \end{equation}
  for all $(g,g')\in G^{(2)}$.  Finally,
  from the isomorphism $\der E_G = TG \mathbin{{}_{\D s}  \hspace*{-0.1cm}\times_\sigma} \der E$, Equation
  \eqref{eq:proof}, together with the first one
  of~\eqref{eq:proof:theor:der_VB-grpd_1b}, gives
  \begin{equation}
    \label{eq:proof:theor:der_VB-grpd_2b}
    \Delta^\ast_{gg'}=m^\star_\der(\Delta^\ast_{g},\Delta^\ast_{g'}).
  \end{equation}
  for all $(g,g') \in G^{(2)}$. Equations
  \eqref{eq:proof:theor:der_VB-grpd_1b}
  and~\eqref{eq:proof:theor:der_VB-grpd_2b} exactly mean that
  Condition \textit{ii.)} holds.  Implication $\textit{ii.)}
  \Rightarrow\textit{i.)}$ can be proved by retracing the above steps
  in the reverse order.
\end{proof}
\begin{theorem}
  Let $(t^\ast C \rightrightarrows 0_M; G\rightrightarrows M)$ be a
  full-core VB-groupoid. A derivation of $t^\ast C \to G$ is
  multiplicative if and only if it corresponds, via the vector bundle
  isomorphism $\tau : C_G \to t^\ast C$, to a multplicative section of
  $\der C_G \to G$.
\end{theorem}
\begin{proof}
  Let $\Delta$ be a derivation of $t^\ast C \to G$. Then $\Delta$ is
  multiplicative if and only if it generates a flow by automorphisms
  of $(t^\ast C \rightrightarrows 0_M; G \rightrightarrows M)$ if and
  only if the dual derivation $\Delta^\ast$ generates a flow by
  automorphisms of the dual VB-groupoid $(C_G^\ast\rightrightarrows
  C^\ast; G\rightrightarrows M)$ if and only if $\Delta^\ast$ is a
  multiplicative derivation of $C_G^\ast \to G$. Now
  $(C_G^\ast\rightrightarrows C^\ast; G\rightrightarrows M)$ is a
  trivial-core VB-groupoid, hence $\Delta$ is multiplicative if and
  only if $\Delta^\ast$ is a multiplicative section of $(\der
  C_G^\ast\rightrightarrows \der C^\ast; G\rightrightarrows
  M)$. Finally, recall from the proof of Theorem
  \ref{theor:der_VB-grpd}, that the correspondence $\Delta \mapsto
  \Delta^\ast$ is an isomorphism of the VB-groupoids $(\der
  C_G\rightrightarrows \der C; G\rightrightarrows M)$ and $(\der
  C_G^\ast\rightrightarrows \der C^\ast; G\rightrightarrows M)$. This
  concludes the present proof.
\end{proof}
\begin{theorem}
  \label{theor:der_VB-alg}
  Let $(E_A\Rightarrow E; A\Rightarrow M)$ be a trivial-core
  VB-algebroid. A derivation of $E_A \to A$ is IM if and only if it is
  an IM section of $\der E_A \to A$.
\end{theorem}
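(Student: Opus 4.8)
The plan is to mirror the proof of Theorem~\ref{theor:der_VB-grpd}, replacing each groupoid-theoretic step by its infinitesimal analogue. Write $\nabla$ for the flat $A$-connection on $E$ defining the trivial-core VB-algebroid $(E_A, E; A, M)$, and let $(E^\ast_A, E^\ast; A, M)$ be the trivial-core VB-algebroid attached to the dual representation on $E^\ast$. Applying Proposition~\ref{prop:diff_VB-alg} to both yields VB-algebroids $(\der E_A, \der E; A, M)$ and $(\der E^\ast_A, \der E^\ast; A, M)$. First I would check that the canonical isomorphism $\der E_A \to \der E^\ast_A$, $\Delta \mapsto \Delta^\ast$ of Remark~\ref{rem:dual} is an isomorphism of VB-algebroids covering $\der E \to \der E^\ast$; this is a routine verification against the anchor~\eqref{eq:anchor} and the brackets~\eqref{eq:brackets}. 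Consequently, $\Delta$ is an IM section of $\der E_A \to A$ if and only if $\Delta^\ast$ is an IM section of $\der E^\ast_A \to A$.

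Next I would reformulate both sides of the claimed equivalence. Since $\Delta$ is a derivation of $E_A \to A$, the associated linear vector field $X_\Delta$ is automatically linear with respect to $E_A \to A$; hence, by Definition~\ref{def:IM_der} together with Remark~\ref{rem:inf_automorphism_alg}, $\Delta$ is an IM derivation if and only if $X_\Delta$ is an IM vector field on the Lie algebroid $E_A \to E$. By the Mackenzie--Xu characterization recalled in Remark~\ref{rem:IM_vf}, the latter holds if and only if $X_\Delta$, viewed as a section $E_A \to TE_A$, is a morphism of Lie algebroids from $E_A \to E$ to $TE_A \to TE$. On the other side, by the previous paragraph $\Delta$ is an IM section of $\der E_A \to A$ exactly when $\Delta^\ast \colon A \to \der E^\ast_A$ is a Lie algebroid morphism from $A \to M$ to $\der E^\ast_A \to \der E^\ast$. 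It therefore suffices to prove the equivalence of the two conditions
\begin{definitionlist}
\item $X_\Delta \colon E_A \to TE_A$ is a Lie algebroid morphism, and
\item $\Delta^\ast \colon A \to \der E^\ast_A$ is a Lie algebroid morphism.
\end{definitionlist}

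This core equivalence is where the real work lies and is the step I expect to be the main obstacle. Each of the two conditions unpacks into compatibility with the bundle projections, compatibility with the anchors, and compatibility with the brackets. For the projection and anchor parts I would argue as in Theorem~\ref{theor:der_VB-grpd}: the pointwise determination principle of Remark~\ref{rem:theor:der_VB-grpd}, applied here to $E_A = p^\ast E$ (a point of $\der E_A$ is determined by $\der\tilde p$ of it together with its symbol), reduces these to conditions on the common symbol $\sigma(\Delta) = \sigma(\Delta^\ast)$ and on the induced derivation $\underline{\Delta}$ of $E$ obtained as the $\tilde p$-projection of $X_\Delta$ (cf.~Remark~\ref{rem:inf_automorphism_alg}). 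The genuinely new point, with no single-equation counterpart in the groupoid proof, is bracket compatibility: one must match the action-algebroid bracket on $\Gamma(E_A, E)$, generated by $[q^\ast a, q^\ast b] = q^\ast[a,b]$ (Example~\ref{ex:st_algebroid}), against the bracket on sections of $\der E^\ast_A \to \der E^\ast$ determined by~\eqref{eq:brackets} and the anchor~\eqref{eq:anchor}. I would verify this on the generating total lifts $\der a$ and core sections $\widehat b$, transporting the resulting identities to the $X_\Delta$ side through the duality pairing and the connection $\nabla$; the computation is routine but bookkeeping-heavy, and is cleanest in local coordinates adapted to the double vector bundle structure of $\der E_A$. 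The implication (ii)~$\Rightarrow$~(i) then follows by retracing the steps in reverse order, exactly as in the proof of Theorem~\ref{theor:der_VB-grpd}.
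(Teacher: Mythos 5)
Your outline is sound and, after the reductions, lands on essentially the same task as the paper, but it gets there by a different route and stops short of the decisive computation. The paper's proof of Theorem~\ref{theor:der_VB-alg} does \emph{not} dualize: it keeps $\Delta$ as a section $A \to \der E_A$ and $X_\Delta$ as a section $E_A \to TE_A$, invokes Remark~\ref{rem:IM_vf} exactly as you do, and then proves the equivalence of the two Lie algebroid morphism conditions by writing the de Rham differentials $\D_A$, $\D_{E_A}$, $\D_{TE_A}$ and $\D_{\der E_A}$ in coordinates adapted to the double vector bundle structures and checking that the conditions $\Delta^\star \circ \D_{\der E_A} = \D_A \circ \Delta^\star$ and $X_\Delta^\star \circ \D_{TE_A} = \D_{E_A} \circ X_\Delta^\star$ reduce to one and the same explicit system of PDEs; the only trace of the duality between $\Delta$ and $X_\Delta$ is the sign in the coordinate expression $\dot v{}^A = -V^A_B v^B$. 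Your detour through $(E^\ast_A, E^\ast; A, M)$ mirrors the groupoid proof, where it is genuinely convenient because the structure maps of $\der E^\ast_G$ can be matched pointwise against those of $TE_G$ via Remark~\ref{rem:theor:der_VB-grpd}; in the algebroid setting it buys little and costs an extra verification (that $\Delta \mapsto \Delta^\ast$ is a VB-algebroid isomorphism $\der E_A \to \der E^\ast_A$). Be aware also that the pointwise-determination principle only disposes of the projection and anchor compatibilities: the bracket compatibility involves Lie algebroid morphisms covering non-surjective base maps ($X_\Delta$ covers a section $E \to TE$ and $\Delta^\ast$ a section $M \to \der E^\ast$), so ``checking on generators'' must be carried out in the morphic/de Rham formulation rather than naively on brackets of sections --- which is precisely why the paper phrases everything through the differentials. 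Nothing in your plan would fail, and your proposed finish in adapted local coordinates is exactly what the paper executes; but as written the core equivalence is deferred rather than proved.
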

\begin{proof}
  Let $\Delta$ be a derivation of $E_A \to A$ and let $X_\Delta \in
  \mathfrak X(E_A)$ be the corresponding linear vector field. Regard
  $\Delta$ as a section of $\der E_A \to A$, and $X_\Delta$ as a
  section of $E_A \to TE_A$. In view of Remark \ref{rem:IM_vf} it is
  enough to show that $\Delta$ is an IM section of $\der E_A \to A$ if
  and only if $X_\Delta$ is an IM section of $E_A \to TE_A$. This can
  be checked with a long, but straightforward, computation in local
  coordinates, that we sketch below.
  Let $(x^i)$ be local coordinates on $M$, let $(e_A)$ be a local
  basis of $\Gamma (E)$, and let $(v^A)$ be the associated, linear
  fiber coordinates on $E$. Similarly, let $(\varepsilon_\alpha)$ be a
  local basis of $\Gamma (A)$, and let $(u^\alpha)$ be the associated,
  linear, fiber coordinates on $A$. So $E_A$ is coordinatized by
  $(x^i, u^\alpha, v^A)$. A derivation $\Delta : \Gamma (E) \to E_x$
  of $E \to M$ at the point $x \in M$, is locally of the form
  \begin{equation}
    \Delta (f^A e_A) 
    = 
    \left( \dot x{}^i \left. \frac{\partial}{\partial x^i}\right|_x f^B + f^A (x) v^B_A \right) e_{B,x}
  \end{equation} 
  for some real numbers $(\dot x^i, v^B_A)$ that serve as linear,
  fiber coordinates on the total space of $\der E \to M$. Note that
  $\sigma (\Delta)= \dot x{}^i \left. \frac{\partial}{\partial
    x^i}\right|_x$. Similarly, a derivation $\Delta : \Gamma (E_A) \to
  E_x$ of $E_A \to A$ at the point $a \in A$, is locally of the form
  \begin{equation}
    \Delta (f^A p^\ast e_A) = \left( \dot x{}^i \left. \frac{\partial}{\partial x^i}\right|_{a} f^B 
    + \dot u{}^\alpha \left. \frac{\partial}{\partial u^\alpha}\right|_a f^B + f^A (x) v^B_A \right) e_{B,x}
  \end{equation} 
  for some real numbers $(\dot x^i, \dot u^\alpha, v^B_A)$ that serve
  as linear, fiber coordinates on the total space of $\der E_A \to A$, and
  $\sigma (\Delta)= \dot x{}^i \left. \frac{\partial}{\partial
    x^i}\right|_a + \dot u{}^\alpha \left. \frac{\partial}{\partial
    u^\alpha}\right|_a$.
  The Lie algebroid structure in $A \to M$ is encoded by the de Rham
  differential $\D_A$ in $\Gamma (\wedge^\bullet A^\ast) =
  \wedge^\bullet \Gamma (A)^\ast$ which does locally look like:
  \begin{equation}
    \D_A 
    = 
    \rho^{i}_\alpha u^\alpha \frac{\partial}{\partial x^i} 
    - 
    \frac{1}{2}c^{\alpha}_{\beta \gamma} u^\beta \wedge u^\gamma \frac{\partial}{\partial u^\alpha},
  \end{equation}
  for some functions $(\rho^{i}_\alpha)$ encoding the anchor $\rho: A
  \to TM$, and some (structure) functions $(c^\alpha_{\beta\gamma})$
  encoding the Lie bracket in $\Gamma (A)$. Finally, the flat
  $A$-connection $\nabla$ in $E$ is locally encoded by ``Christoffel
  symbols'' $(\Gamma_\alpha{}^B_A)$ such that
  \begin{equation}
    \nabla_{\varepsilon_\alpha} e_A =  \Gamma_\alpha{}^B_A e_B.
  \end{equation}
  It is now straightforward to check that the Lie algebroid structure
  in $E_A \to E$ is encoded by a de Rham differential $\D_{E_{A}}$ in
  $\wedge^\bullet \Gamma (E_A, E)^\ast$ locally given by:
  \begin{equation}\label{eq:d_E_A}
    \D_{E_{A}}
    =  
    \rho^{i}_\alpha u^\alpha \frac{\partial}{\partial x^i} 
    - 
    \Gamma_\alpha{}^B_A v^B   u^\alpha \frac{\partial}{\partial v^B} 
    -
    \frac{1}{2}c^{\alpha}_{\beta \gamma} u^\beta \wedge u^\gamma \frac{\partial}{\partial u^\alpha}.
  \end{equation}
  The (tangent) Lie algebroid structure in $TE_A \to TE$ is then
  encoded by a differential $\D_{T E_A}$ in $\wedge^\bullet \Gamma (T
  E_A, TE)^\ast$ locally given by the ``tangent lift'' of
  \eqref{eq:d_E_A}, i.e.
  \begin{equation}
    \begin{aligned}
      \D_{TE_A} 
      = {} 
      & \rho^{i}_\alpha u^\alpha \frac{\partial}{\partial x^i}  
      + 
      \left(\frac{\partial \rho^{i}_\alpha}{\partial x^j}\dot x^j u^\alpha + \rho^i_\alpha \dot u{}^\alpha\right)\frac{\partial}{\partial \dot x{}^i}
      \\
      &- 
      \Gamma_\alpha{}^B_A v^B   u^\alpha \frac{\partial}{\partial v^B} 
      -
      \left( \frac{\partial \Gamma_\alpha{}_B^A}{\partial x^i} \dot x{}^i v^B u^\alpha 
      + 
      \Gamma_\alpha{}_B^A \dot v{}^B u^\alpha + \Gamma_\alpha{}_B^A v^B \dot u{}^\alpha \right)\frac{\partial}{\partial \dot v{}^A} 
      \\
      & 
      - 
      \frac{1}{2}c^{\alpha}_{\beta \gamma} u^\beta \wedge u^\gamma \frac{\partial}{\partial u^\alpha}
      - \left(c_{\beta \gamma}^\alpha u^\beta \wedge \dot u{}^\gamma 
      + 
      \frac{1}{2}\frac{\partial c^{\alpha}_{\beta \gamma}}{\partial x^i} \dot x{}^i u^\beta \wedge u^\gamma \right) \frac{\partial}{\partial \dot u{}^\alpha}.
    \end{aligned}
  \end{equation}
  Finally the Lie algebroid structure in $\der E_A \to \der E$ is encoded by a
  differential $\D_{\der E_A}$ in $\wedge^\bullet \Gamma (\der E_A, \der E)^\ast$
  locally given by
  \begin{equation}
    \begin{aligned}
      \D_{\der E_A} = {} & \rho^{i}_\alpha u^\alpha
      \frac{\partial}{\partial x^i} + \left(\frac{\partial
        \rho^{i}_\alpha}{\partial x^j}\dot x^j u^\alpha + \rho^i_\alpha \dot
      u{}^\alpha\right)\frac{\partial}{\partial \dot x{}^i}\\ & +
      \left( \frac{\partial \Gamma_\alpha{}^A_B}{\partial x^i} \dot
      x{}^i u^\alpha + \Gamma_\alpha{}_B^C v_C^A u{}^\alpha -
      \Gamma_\alpha{}_C^A v_B^C u^\alpha + \Gamma_\alpha{}_B^A \dot
      u{}^\alpha \right) \frac{\partial}{\partial v^A_B} \\ & -
      \frac{1}{2}c^{\alpha}_{\beta \gamma} u^\beta \wedge u^\gamma
      \frac{\partial}{\partial u^\alpha} - \left(c_{\beta
        \gamma}^\alpha u^\beta \wedge \dot u{}^\gamma +
      \frac{1}{2}\frac{\partial c^{\alpha}_{\beta \gamma}}{\partial
        x^i} \dot x{}^i u^\beta \wedge u^\gamma \right)
      \frac{\partial}{\partial \dot u{}^\alpha}.
    \end{aligned}
  \end{equation}
  Now, let $\Delta$ be a section of $\der E_A \to A$ locally given by
  \begin{equation}
    \Delta : \left\{ 
    \begin{array}{ccc}
      \dot x{}^i & = & X^i (x, u) \\
      \dot u{}^\alpha & = & U^\alpha (x,u) \\
      v^A_B & = & V^A_B (x,u) 
    \end{array}
    \right. .
  \end{equation} 
  The associated linear vector field on $E_A$ is the section $X_\Delta$
  of $TE_A \to E_A$ locally given by
  \begin{equation}
    X_\Delta : \left\{ 
    \begin{array}{ccc}
      \dot x{}^i & = & X^i (x, u) \\
      \dot u{}^\alpha & = & U^\alpha (x,u) \\
      \dot v{}^A & = & - V^A_B (x,u) v^B 
    \end{array}
    \right. .
  \end{equation} 
  By definition, $\Delta$ is an IM section if and only if
  \begin{enumerate}
  \item[$(IM1)$] $\Delta$ is a morphism of the vector bundles $A \to
    M$ and $\der E_A \to \der E$, and
  \item[$(IM2)$] $\Delta^\star \circ \D_{\der E_A} = \D_A \circ
    \Delta^\star$,
  \end{enumerate}
  where $\Delta^\star : \wedge^\bullet \Gamma (\der E_A, \der E)^\ast
  \to \wedge^\bullet \Gamma (A)^\ast$ is the obvious graded algebra
  map induced by $\Delta$ in view of $(IM1)$. Similarly, $X_{\Delta}$
  is an IM section if and only if
  \begin{enumerate}
  \item[$(IM'1)$] $X_\Delta$ is a morphism of the vector bundles $E_A \to E$ and $TE_A \to TE$, and
  \item[$(IM'2)$] $X_\Delta^\star \circ \D_{TE_A} = \D_{E_{A}}\circ X_\Delta^\star$,
  \end{enumerate}
  where $X_\Delta^\star : \wedge^\bullet \Gamma (TE_A, TE)^\ast \to
  \wedge^\bullet \Gamma (E_A, E)^\ast$ is the graded algebra map
  induced by $X_\Delta$ in view of $(IM'1)$. A direct computation
  shows that $(IM1)$ and $(IM'1)$ are both equivalent to
  \begin{equation}
    \begin{aligned}
      X^i & = X^i (x), \\
      V^A_B & = V^A_B (x), \\
      U^\alpha & = U^\alpha_\beta (x) u^\beta. 
    \end{aligned}
  \end{equation}
  Finally, $(IM'1)$ and $(IM'2)$ are both equivalent to the following
  system of PDEs:
  \begin{equation}
    \begin{aligned}
      \rho^j_\alpha \frac{\partial X^i}{\partial x^j} 
      & = 
      \frac{\partial \rho^i_\alpha}{\partial x^i} X^i + \rho^i_\beta U^\beta_\alpha, \\
      \rho^i_\alpha \frac{\partial V^A_B}{\partial x^i} 
      & = 
      \frac{\partial \Gamma_\alpha{}_B^A}{\partial x^i} X^i + \Gamma_\alpha{}_B^C V^A_C - \Gamma_\alpha{}_C^A V^C_B + \Gamma_\beta{}_A^B U^\beta_\alpha,
      \\
      \rho^i_\beta \frac{\partial U^\alpha_\gamma}{\partial x^i} - \rho^i_\gamma \frac{\partial U^\alpha_\beta}{\partial x^i}
      & = 
      c^\alpha_{\delta \beta} U^\delta_\gamma - c^\alpha_{\delta \gamma} U^\delta_\beta 
      + c^\delta_{\beta\gamma} U^\alpha_\delta  -  \frac{\partial c^\alpha_{\beta\gamma}}{\partial x^i} X^i.
    \end{aligned}
  \end{equation}
  This concludes the proof.
\end{proof}
By an analogous computation we can prove the following
\begin{theorem}
  Let $(A \times_M C\Rightarrow 0_M; A\Rightarrow M)$ be a full-core VB-algebroid. A derivation of
  $A \times_M C \to A$ is IM if and only if it is an IM section of $\der C_A \to A$.
\end{theorem}
  We showed in this section that the gauge algebroid of a trivial-core
  or a full-core VB-groupoid is a VB-groupoid itself and likewise for
  VB-algebroids. For a generic VB-groupoid $(\mathcal W
  \rightrightarrows E; G \rightrightarrows M)$ the gauge algebroid is
  \emph{not} a VB-groupoid. To see this notice that $\der \mathcal W$
  contains $\End \mathcal W = \mathcal W^\ast \otimes \mathcal W$ as a
  vector subbundle (and a Lie subalgebroid). But the tensor product of
  two VB-groupoids over $G \rightrightarrows M$ is not a VB-groupoid
  in general. In fact it is part of a \emph{higher VB-groupoid} or,
  more precisely, a \emph{simplicial vector bundle over the nerve of
    $G$} \cite{dHT20XX}. We conjecture that $\der \mathcal W$ is a
  higher VB-groupoid in the same sense.
  In the case of a VB-algebroid $(W \Rightarrow E; A \Rightarrow M)$
  we can say a little bit more. The Lie algebroid $A \Rightarrow M$ is
  equivalent to the degree $1$ $\mathbb N Q$-manifold $(A[1], \mathrm{d}_A)$
  . Similarly, as proved by T.~Voronov \cite{V2012}, $(W \Rightarrow
  E; A \Rightarrow M)$ is equivalent to the vector bundle $(W[1]_E \to
  A[1], d_W)$ in the category of degree $1$ $\mathbb N
  Q$-manifolds. Here $W[1]_E$ is obtained from $W$ shifting by one the
  degree in the fibers over $E$. Now, not only $d_W$ is a homological
  vector field, but it is a linear vector field, hence it corresponds
  to a degree $1$ derivation $\Delta_W$ of $W[1]_E \to A[1]$, such
  that $\Delta_W^2 = 0$.  Denote by $\mathcal D W[1]_E$ the module of
  all graded derivations of $W[1]_E \to A[1]$, so that $(\mathcal D
  W[1]_E, [\Delta_W, -]))$ is a DG Lie algebra. More precisely it is
  the DG Lie algebra of sections of a DG Lie algebroid $\der_A W[1]_E
  \Rightarrow A[1]$.  The cochain complex $(\mathcal D W[1]_E,
              [\Delta_W, -]))$ is canonically isomorphic to the linear
              deformation complex of $(W \Rightarrow E; A \Rightarrow
              M)$ \cite{ETV20XX, LPV20XX}, so that IM derivations of
              $W \to A$ are the same as degree zero derivations of
              $W[1]_E \to A[1]$ commuting with $\Delta_W$. In this way
              we encoded IM derivations in $\der_A W[1]_E \Rightarrow
              A[1]$. After a suitable shift, the latter is itself a
              vector bundle in the category of $\mathbb N
              Q$-manifolds, however it is not of degree $1$, but of
              degree $2$. So it is a degree $2$ Lie algebroid module
              in the sense of Va\u{\i}ntrob \cite{V1997} and
              corresponds to a 3-term representation up to homotopy of
              $A$ (see \cite{M2014}, see also next section, for more
              details on representations up to homotopy).

\section{A \texorpdfstring{$2$}{2}-term representation up to homotopy from a plain representation}
\label{sec:applications}

\subsection{VB-groupoids}

There is a nice relationship between VB-groupoids and representations
up to homotopy \cite{Gracia-Saz2010}. Let $G \rightrightarrows M$ be a
Lie groupoid. Recall that a \emph{representation up to homotopy} of
$G$ \cite{CA2013} is a graded vector bundle $q_{\mathcal E} : \mathcal
E = \bigoplus_{k} \mathcal E_k \to M$ together with a degree $1$,
$\mathbb R$-linear operator
\begin{equation}
D : C^\bullet(G, \mathcal E) \to C^\bullet (G, \mathcal E)
\end{equation}
satisfying
\begin{equation}
  D^2 = 0, \quad \text{and} \quad D(\omega \cdot \varepsilon) = \delta \omega \cdot \varepsilon + (-)^{\omega} \omega \cdot D  \varepsilon, 
\end{equation}
for all $\omega \in C^\bullet (G)$, and all $\varepsilon \in C^\bullet
(G, \mathcal E)$. Here $(C^\bullet (G), \delta)$ is the Lie groupoid
complex of $G$, and $C^p(G, \mathcal E)$ consists of $\mathcal
E$-valued groupoid $p$-cochains, i.e.~maps
\begin{equation}
\varepsilon : G^{(p)} \to \mathcal E, \quad (g_1, \ldots, g_p) \mapsto \omega (g_1, \ldots, g_p) \in \mathcal E_{t(g_1)}.
\end{equation}
Additionally, $D$ is required to preserve \emph{normalized conchains},
i.e.~cochains vanishing when one of their entry is $1$. We stress that
when we say that the degree of $D$ is $1$, we refer to the \emph{total
  degree}, i.e.~the sum of the cochain degree and the degree in
$\mathcal E$. For more details see \cite{CA2013}.  We are mainly
interested in \emph{$2$-term} representations up to homotopy, i.e.~to
the case when $\mathcal E = \mathcal E_{-1} \oplus \mathcal E_0$ is
concentrated in degree $-1, 0$.  A $2$-term representation up to
homotopy $(\mathcal E = \mathcal E_{-1} \oplus \mathcal E_0, D)$ is
actually equivalent to the following data
\begin{itemize}
\item a vector bundle map $\partial : \mathcal E_{-1} \to \mathcal
  E_0$,
\item \emph{$G$-quasi-actions} on $\mathcal E_{-1}$ and $\mathcal
  E_{0}$, i.e.~vector bundle maps $\Delta_{-1} : G \mathbin{{}_{s}
    \times_{q_{\mathcal E}}} \mathcal E_{-1} \to \mathcal E_{-1}$ and $\Delta_{0} : G
  \mathbin{{}_{s}  \times_{q_{\mathcal E}}} \mathcal E_{0} \to \mathcal E_{0}$ covering
  the target map $t : G \to M$,
\item a transformation $2$-cochain $\mathcal W \in C^2 (G, \mathcal E_{-1}
  \to \mathcal E_{0})$, i.e.~a vector bundle map
\begin{equation}
  \varpi : G^{(2)} \mathbin{{}_{s\operatorname{pr}_1}  \times_{q_{\mathcal E}}} \mathcal E_{-1} \to \mathcal E_0, \quad (g_1, g_2 ; v) \mapsto \varpi_{(g_1, g_2)} v \in E_{t(g_2)}
\end{equation}
where $s \operatorname{pr}_1 : G^{(2)} \to M$ is the composition of
the first projection $\operatorname{pr}_1 : G^{(2)}\to G$ followed by
the source map.
\end{itemize}
Additionally data $\partial, \Delta_{-1}, \Delta_0, \varpi$ have to
satisfy suitable compatibility conditions \cite[Section
  2.7]{Gracia-Saz2010}.

The main remark here is that VB-groupoids $(\mathcal W
\rightrightarrows E; G \rightrightarrows M)$ are equivalent to
$2$-term representations up to homotopy of the Lie groupoid $ G
\rightrightarrows M$ up to isomorphisms. From $(\mathcal W
\rightrightarrows E; G \rightrightarrows M)$ one can construct the
corresponding (isomorphism class of) $2$-term representation(s) up to
homotopy as follows. As usual, denote by $C$ the core. There is a
short exact sequence of vector bundles over $G$
\begin{equation}\label{eq:SESVB}
  0 \to t^\ast C \to \mathcal W \to s^\ast E \to 0.
\end{equation}
where the inclusion $t^\ast C \to \mathcal W$ is defined by $(g, \chi)
\mapsto \chi \cdot 0_{g}$, and the projection $\mathcal W \to s^\ast
E$ is the obvious one. Next we need a \emph{horizontal lift}, i.e.~a
splitting $h : s^\ast E \to \mathcal W$, $(g, e) \mapsto h_g e$ of
\eqref{eq:SESVB} with the additional property of being compatible with
the units, i.e.~$h (1_x, e) = 1_e$ for all $e \in E_x$, $x \in
M$. Horizontal lifts always exist. Now put $\mathcal E_{-1} = C[1]$
(where the ``$[1]$'' refers to the fact that sections of $C$ should be
thought as possessing degree $-1$), $\mathcal E_0 = E$ and finally
\begin{equation}\label{eq:2-rep_grpd}
  \begin{aligned}
    \partial c & = \tilde t (c) \\
    \Delta_{-1} (g, c) & = h_g (\tilde t (c)) \cdot c \cdot 0_{g^{-1}} \\
    \Delta_0 (g, e) & = \tilde t (h_g (e)) \\
    \varpi_{(g_1, g_2)} e' & 
    = \left( h_{g_1g_2} (e') - h_{g_1} (\tilde t(h_{g_2} (e'))) \cdot h_{g_2} (e') \right) \cdot 0_{(g_1g_2)^{-1}}
  \end{aligned}
\end{equation}
for all $g \in G$, $c \in C_{s(g)}$, $e \in E_{s(g)}$, $(g_1, g_2) \in
G^{(2)}$, and $e' \in E_{s(g_1)}$. Then data $(\partial, \Delta_{-1},
\Delta_0, \varpi)$ define a $2$-term representation up to homotopy
$(\mathcal E = \mathcal E_{-1}\oplus \mathcal E_0, D)$ which is
actually independent of $h$ up to isomorphisms (see
\cite{Gracia-Saz2010} for more details).

In the last part of this subsection we describe the data $(\partial ,
\Delta_{-1}, \Delta_0, \varpi)$ for the $2$-term representation up to
homotopy corresponding to the VB-groupoid $(\der E_G \rightrightarrows
\der E; G \rightrightarrows M)$ from Proposition
\ref{prop:diff_VB-grpd}. So let $G \rightrightarrows M$ be a Lie
groupoid, and let $E$ be a representation of it.
Recall from
Remark~\ref{rem:core_DE_G} that the core of $(\der E_G
\rightrightarrows \der E; G \rightrightarrows M)$ is canonically
isomorphic to the Lie algebroid $A$ of $G$. The isomorphism identifies
$a$ to $\mathbb D_a$, where $\mathbb D$ is the flat $(\ker \D
s)$-connection in $E_G$. So there is a short exact sequence of vector
bundles over $G$
\begin{equation}\label{eq:SESDEG}
0 \to t^\ast A \to \der E_G \to s^\ast \der E \to 0,
\end{equation}
where the inclusion $ t_{\der}^\ast A \to \der E_G$ is given by
\begin{equation}
(g, a) = \mathbb D_a \cdot 0_g = \mathbb D_{\overrightarrow a_g}.
\end{equation}
As $G$ acts on $E$, then $A$ acts on $E$ as well, and we denote by
$\nabla : A \to \der E$ the corresponding flat $A$-connection. Now we
choose a horizontal lift $h : s^\ast \der E \to \der E_G$, to get a
$2$-term representation up to homotopy $(\mathcal E, D)$ of $G$, with
$\mathcal E = A[1] \oplus \der E$. Specializing formulas
(\ref{eq:2-rep_grpd}) to the present situation it is easy to see that
$D$ corresponds to the data $(\partial, \Delta_{-1}, \Delta_0, \varpi)$
given by
\begin{equation}\label{eq:2-rep_DEG}
\begin{aligned}
  \partial a & = \nabla_a \\
  \Delta_{-1} (g, a) & = m_{\der} \left( h_g (\nabla_a), \mathbb D_{\overrightarrow{a}_{g^{-1}}} \right) \\
  \Delta_0 (g, \square) & = t_{\der} (h_g (\square)) \\
  \varpi_{(g_1, g_2)} \square ' 
  & = 
  \der R_{(g_1g_2)^{-1}} \left( h_{g_1g_2} (\square') - m_{\der} \left(h_{g_1} (t_{\der}(h_{g_2} (\square'))), h_{g_2} (\square') \right) \right)
\end{aligned}
\end{equation}
for all $g : x \to y \in G$, $a \in A_{s(g)}$, $\square \in
\der_{s(g)} E$, $(g_1, g_2) \in G^{(2)}$, and $\square' \in
\der_{s(g_1)} E$, where $R_{g} : s^{-1}(y) \times E_{y} \to s^{-1} (x)
\times E_x$ is the right translation.

\begin{remark}
  Let $\pi : \der E_G \to t^\ast A$, be the projection induced by the
  horizontal lift $h$. It is not hard to show that quasi-action
  $\Delta_{-1}$ is equivalently given by
  \begin{equation}\label{eq:qa_AC}
    \Delta_{-1} (g, a) = - \pi_g \left (\der L_g \circ \der i  (\mathbb D_a) \right),
  \end{equation}
  for all $g : x \to y \in G$, $a \in A_{s(g)}$, where $L_{g} :
  t^{-1}(x) \times E_{x} \to t^{-1} (y) \times E_y$ is the left
  translation, and $i : s^{-1}(x) \times E_x \to t^{-1}(x) \times E_x$
  is the inversion. Formula (\ref{eq:qa_AC}) is the ``derivation
  version'' of the $G$-quasi-action on $TM$ determined by an Ehresmann
  connection in $G$, entering the adjoint representation (up to
  homotopy) of $G$, and appearing in \cite[Definition 2.11]{CA2013}.
\end{remark}

Summarizing, we proved the following

\begin{proposition}
  Let $G \rightrightarrows M$ be a Lie groupoid and let $E$ be a
  representation of $G$. Additionally, let $h : s^\ast \der E \to
  \der E_G$ be a horizontal lift. These data define a $2$-term
  representation up to homotopy $(\mathcal E, D)$ of $G$, where
  $\mathcal E = A[1] \oplus \der E$, and $D$ corresponds to data
  $(\partial, \Delta_{-1}, \Delta_0, \varpi)$ given by
  (\ref{eq:2-rep_DEG}). The representation up to homotopy $(\mathcal
  E, D)$ does only depend on $E$, in particular it is independent of
  $h$, up to isomorphisms.
\end{proposition}

\begin{remark}
  Using the isomorphism $\der E_G \cong TG \mathbin{{}_{ds}
    \hspace*{-0.1cm}\times_\sigma \der E}$, a horizontal lift $h : s^\ast \der E \to
  \der E_G$, can be thought of as a morphism $s^\ast \der E \to TG$ of
  vector bundles over $G$, additionally covering $\sigma : \der E \to
  TM$. The \eqref{eq:2-rep_DEG} can then be re-expressed in terms of
  the action $\mathcal A$ of $TG$ on $\der E$. We leave the details to
  the reader.
\end{remark}

\begin{remark}
  The symbol, relates the VB-groupoids $(\der E_G \rightrightarrows \der E; G \rightrightarrows M)$ 
  and $(TG \rightrightarrows TM; G \rightrightarrows M)$ (Diagram \ref{eq:ses_DE_G}). 
  In turn, a horizontal lift $k : s^\ast TM \to TG$ of the tangent groupoid determines 
  a $2$-term representation up to homotopy $(\mathcal E^T, D^T)$ of $G$ where 
  $\mathcal E^T = A[1] \oplus TM$, the \emph{adjoint representation} (see \cite{CA2013} for details, 
  see also \cite{Gracia-Saz2010}). It is now natural to expect that the symbol relates $(\mathcal E, D)$ 
  and $(\mathcal E^T, D^T)$. This is indeed the case if the horizontal lifts $h : s^\ast \der E \to \der E_G$ 
  and $k : s^\ast TM \to TG$ are chosen in an appropriate 
  way. Namely, let $h, k$ be such that the diagram
  \begin{equation}
    \label{eq:h,k}
    \begin{gathered}
    \begin{tikzcd}
      s^\ast \der E \arrow[r, "h"]\arrow[d, swap, " "] & \der E_G \arrow[d, "\sigma"] \\
      s^\ast TM \arrow[r, swap, "k"]&TG
    \end{tikzcd}
    \end{gathered}
  \end{equation}
  commutes (here the left vertical arrow is that induced by the symbol $\der E \to TM$). 
  Such $h,k$ indeed exist. To see this, choose $k$ and use the isomorphism 
  $\der E_G \cong TG \mathbin{{}_{ds}\hspace*{-0.1cm} \times_\sigma \der E}$ to put
  \[
    h (g, \square) := (k(g, \sigma(\square)), \square).
  \]
  The $h,k$ have all the required properties. Finally, denote by $(\partial^T, \Delta_{-1}^T, \Delta_{0}^T, \varpi^T)$ 
  the \emph{structure data} of the adjoint representation. 
  It immediately follows from Diagrams (\ref{eq:ses_DE_G}) and (\ref{eq:h,k}) that
  \[
    \sigma \circ \partial 
    = 
    \partial^T, \quad \Delta_{-1} 
    = 
    \Delta_{-1}^T, \quad \sigma \circ \Delta_0 
    = 
    \Delta_0^T \circ (\operatorname{id}_G \times \sigma), \quad \text{and} \quad \sigma \circ \varpi = \varpi^T.
  \]
  In other words, the homomorphism of $C^\bullet (G)$-modules
  \[
    C^\bullet (G, A[1] \oplus \der E) \to C^\bullet (G, A[1] \oplus TM)
  \] 
  obtained from $\operatorname{id}_A \oplus \sigma : A[1] \oplus \der E \to A[1] \oplus TM$ 
  by extension of scalars, is a DG-module homomorphism, i.e.~it additionally 
  intertwines the differentials $D$ and $D^T$.
\end{remark}

\subsection{VB-algebroids}

We recall again from \cite{gracia2010lie} that there is a nice
relationship between VB-algebroids and representations up to
homotopy. Let $A \Rightarrow M$ be a Lie algebroid. Recall that a
\emph{representation up to homotopy} of $A$ \cite{CA2012} is a graded
vector bundle $\mathcal E \to M$ together with a degree $1$, $\mathbb
R$-linear operator
\begin{equation}
  D : \Omega(A, \mathcal E) \to \Omega (A, \mathcal E)
\end{equation}
satisfying
\begin{equation}
  D^2 = 0, 
  \quad \text{and} \quad 
  D(\omega \cdot \varepsilon) = \D_A \omega \cdot \varepsilon + (-)^{\omega} \omega \cdot D  \varepsilon, 
\end{equation}
for all $\omega \in \Omega (A)$, and all $\varepsilon \in \Omega(A,
\mathcal E)$ (i.e., $D$ is an homological derivation of the graded
$\Omega(A)$-module $\Omega(A, E)$, with symbol $\D_A$).  Here
$(\Omega(A) = \Gamma (\wedge^\bullet A^\ast), \D_A)$ is the de Rham
complex of $A$, and $\Omega (A, \mathcal E) = \Gamma (\wedge^\bullet
A^\ast \otimes \mathcal E)$. We are mainly interested in
\emph{$2$-term} representations up to homotopy, i.e.~to the case when
$\mathcal E = \mathcal E_{-1} \oplus \mathcal E_0$ is concentrated in
degree $-1, 0$.  A $2$-term representation up to homotopy $(\mathcal E
= \mathcal E_{-1} \oplus \mathcal E_0, D)$ is equivalent to the
following data
\begin{itemize}
\item a vector bundle map $\partial : \mathcal E_{-1} \to \mathcal
  E_0$,
\item an $A$-connection $\nabla_{-1}$ in $\mathcal E_{-1}$, with
  curvature $R_{-1} \in \Omega^2 (A, \End \mathcal E_{-1})$,
\item an $A$-connection $\nabla_0$ in $\mathcal E_0$, with curvature
  $R_0 \in \Omega^2 (A, \End \mathcal E_0)$ and
\item a $\Hom (\mathcal E_0, \mathcal E_{-1})$-valued $2$-form $R \in
  \Omega^2 (A, \Hom (\mathcal E_0, \mathcal E_{-1}))$,
\end{itemize}
such that
\begin{equation}
\partial \circ (\nabla_{-1})_a 
= 
(\nabla_0)_a \circ \partial, \quad R_{-1} (a, b) 
= 
R(a, b) \circ \partial, \quad  R_0 (a, b) 
= 
\partial \circ R(a, b), \quad \text{and} \quad d_{\nabla^{\Hom}} R = 0,
\end{equation}
for all $a, b \in \Gamma (A)$, where $\nabla^{\Hom}$ is the
$A$-connection in $\Hom (\mathcal E_0, \mathcal E_{-1})$ induced by
$\nabla_{-1}, \nabla_0$. The data $(\partial, \nabla_{-1}, \nabla_0, R)$
correspond to the representation up to homotopy whose differential $D$
is
\begin{equation}
D = \partial + \nabla_{-1} + \nabla_0 + R
\end{equation}
where we interpret $\partial, \nabla_{-1}, \nabla_0, R$ as degree
$1$ operators on $\Omega(A, \mathcal E_{-1} \oplus \mathcal E_0)$ in
the obvious way (see \cite{gracia2010lie} for more details).

The main remark here is that VB-algebroids $(W \Rightarrow E; A
\Rightarrow M)$ are equivalent to $2$-term representations up to
homotopy of a Lie algebroid $A \Rightarrow M$ up to isomorphisms. From
$(W \Rightarrow E; A \Rightarrow M)$ one can construct the
corresponding (isomorphism class of) $2$-term representation(s) up to
homotopy as follows. As usual, denote by $C$ the core and by $\widehat
A$ the fat algebroid, and consider the short exact sequence
\begin{equation}
0 \to \Hom (E, C) \to \widehat A \to A \to 0.
\end{equation}
Choose a splitting $\sigma : A \to \widehat A$. Now put $\mathcal
E_{-1} = C[1]$, $\mathcal E_0 = E$, let $\partial : C \to E$ be the core-anchor, and additionally
\begin{equation}
\nabla_{-1} = \psi^c \circ \sigma, \quad \nabla_0 = \psi^s \circ \sigma,
\end{equation}
where $ \psi^s, \psi^c$ are the side and the
core representation respectively. Finally, we let $R$ be given by
\begin{equation}
R(a,b) = \sigma [a, b] - [\sigma(a), \sigma (b)], \quad a,b \in \Gamma (A).
\end{equation}

In the last part of this section we describe the $2$-term
representation up to homotopy corresponding to the VB-algebroid $(\der
E_A \Rightarrow \der E; A \Rightarrow M)$ from Proposition
\ref{prop:diff_VB-alg}. So let $A \Rightarrow M$ be a Lie algebroid,
and let $E$ be a representation of it. We denote by $\nabla$ the flat
$A$-connection in $E$.  We have already seen that the core of $(\der
E_A \Rightarrow \der E; A \Rightarrow M)$ is (canonically isomorphic
to) $A$. So there is a short exact sequence of vector bundles
\begin{equation}
\label{eq:SESDEA}
0 \to \Hom (\der E, A) \to \widehat A \to A \to 0
\end{equation}
where $\widehat A$ is the fat algebroid. We begin describing $\widehat A$.
Denote by $p : A \to M$ the projection.

\begin{lemma}\label{lem:fat_alg_DEA}
  The fiber $\widehat A_x$ of the fat algebroid $\widehat A
  \Rightarrow M$ of $(\der E_A \Rightarrow \der E; A \Rightarrow M)$
  over a point $x \in M$ consists of pairs $(a, h)$, where $a \in A_x$
  and $h : \der_x E \to T_a A$ is a linear map such that $dp \circ h =
  \sigma$. The projection $\widehat A \to A$ maps $(a, h)$ to $a$.
\end{lemma}

\begin{proof}
  The claim immediately follows from the isomorphism 
  $\der E_A \cong TA \mathbin{{}_{dp} \hspace*{-0.1cm} \hspace*{-0.1cm}\times_\sigma } \der E$.
\end{proof}

\begin{remark}\label{rem:APPL_Luca}
  The map $\Gamma (A) \to \Gamma (\widehat A)$, $a \mapsto \der a$
  splits the short exact sequence
  \begin{equation}
  0 \to \Gamma (\Hom (\der E, A)) \to \Gamma (\widehat A) \to \Gamma(A) \to 0
  \end{equation}
  in the category of vector spaces. In particular, a section of
  $\widehat A \to M$ can be seen as a pair $(a, \varphi)$ where $a \in
  \Gamma (A)$ and $\varphi \in \Gamma (\Hom (\der E, A))$.  We now
  take this point of view to describe a little bit more explicitly
  (the core-anchor and) the core, the side representations and the Lie
  bracket in $\Gamma (\widehat A)$. Actually, it easily follows from
  the definition of the Lie algebroid $\der E_A \Rightarrow \der E$
  that
  \begin{itemize}
  \item the core-anchor $\partial : A \to DE$ agrees with the
    $A$-connection $\nabla$ in $E$;
  \item the side representation is given by $\psi^s_{\der a} =
    [\nabla_a, -]$ and $\psi^s_\varphi = - \nabla \circ
    \varphi$, \item the core representation is given by $\psi^c_{\der
      a} = [a, -]$ and $\psi^c_{\varphi} = - \varphi \circ \nabla$;
  \item the Lie bracket in $\Gamma (\widehat A)$ is given by
    \begin{equation}
      [\der a, \der b] 
      = 
      \der [a, b], \quad [\der a, \varphi] 
      = 
      \mathcal L_a \varphi, \quad [\varphi, \upsilon] 
      = 
      \upsilon \circ \nabla \circ \varphi - \varphi \circ \nabla \circ \upsilon;
    \end{equation}
  \end{itemize}
  for all $a, b \in \Gamma (A)$, and $\varphi, \upsilon \in \Gamma
  (\Hom (\der E, A))$. Here $\mathcal L_a \varphi$ is the bundle map $\der
  E \to A$ given by
  \begin{equation}
    (\mathcal L_a \varphi)(\Delta) 
    := 
    [a, \varphi (\Delta)] - \varphi ([\nabla_a, \Delta]), \quad \Delta \in \Gamma (\der E).
  \end{equation}
\end{remark}

Now we choose a splitting of (\ref{eq:SESDEA}). It follows from Lemma
\ref{lem:fat_alg_DEA} that such a splitting is equivalent to a $\der
E$-connection $\mathcal D$ in $A$. Namely, the connection $\mathcal D$
corresponds to the splitting $\sigma_{\mathcal D} : A \to \widehat A$
given by
\begin{equation}
  \sigma_{\mathcal D} (a) := \der a - \D_{\mathcal D} a, \quad a \in \Gamma (A),
\end{equation}
where $\D_{\mathcal D} : \Gamma (A) \to \Gamma (\Hom (\der E, A))$ is
the connection differential.  Hence, we get a $2$-term representation
up to homotopy $(\mathcal E, D)$ of $A$ by putting $\mathcal E = A[1]
\oplus \der E$ and
\begin{equation}
  D = \partial + \nabla_{-1} + \nabla_0 + R
\end{equation}
with
\begin{equation}
  \partial 
  = 
  \nabla, \quad \nabla_{-1} = \psi^s \circ \sigma_{\mathcal D}, \quad \nabla_0 = \psi^c \circ \sigma_{\mathcal D},
\end{equation}
and 
\begin{equation}
  R(a,b) 
  = 
  \sigma_{\mathcal D} [a, b] - [\sigma_{\mathcal D} (a), \sigma_{\mathcal D} (b)], \quad a,b \in \Gamma (A).
\end{equation}
A straightforward computation using Remark \ref{rem:APPL_Luca} now
shows that
\begin{equation}\label{eq:nabla-1,0}
  (\nabla_{-1})_a \Delta = [\nabla_a, \Delta] + \nabla_{\mathcal D_\Delta a}, \quad (\nabla_0)_a b = [a, b] + \mathcal D_{\nabla_b} a
\end{equation}
and
\begin{equation}\label{eq:R}
  \begin{aligned}
    R(a, b) \Delta & =  [\mathcal D_\Delta a, b] + [a, \mathcal D_\Delta b]  - \mathcal D_\Delta [a,b] \\
    & \quad - \mathcal D_{[\nabla_a, \Delta]} b + \mathcal D_{[\nabla_b, \Delta]} a + \mathcal D_{\nabla_{\mathcal D_\Delta b}} a
    - \mathcal D_{\nabla_{\mathcal D_\Delta a}} b,
  \end{aligned}
\end{equation}
for all $a, b \in \Gamma (A)$, and all $\Delta \in \Gamma (\der E)$.

Summarizing, we proved the following

\begin{proposition}
  Let $A \Rightarrow M$ be a Lie algebroid and let $E$ be a
  representation of $A$ with flat $A$-connection
  $\nabla$. Additionally, let $\mathcal D$ be a (not necessarily flat)
  $\der E$-connection in $A$. These data define a $2$-term
  representation up to homotopy $(\mathcal E, D)$ of $A$, where
  $\mathcal E = A[1] \oplus \der E$, and $D = \nabla + \nabla_{-1} +
  \nabla_{0} + R$, with $\nabla_{-1}$, $\nabla_0$, and $R$ being given
  by (\ref{eq:nabla-1,0}) and (\ref{eq:R}). The representation up to
  homotopy $(\mathcal E, D)$ does only depend on $(E, \nabla)$, in
  particular it is independent of $\mathcal D$, up to isomorphisms.
\end{proposition}

\section*{Acknowledgements}
The authors thank Iakovos Androulidakis for useful discussions. They
are also thankful to the anonymous referee for remarks that simplified
a lot various proofs.  AGT is supported by a FWO postdoctoral
fellowship and further acknowledges the partial support received,
during the preparation of this paper, by the GNSAGA (INdAM), the
Centre for Mathematics of the University of Coimbra
(UID/MAT/00324/2013), and the FWO research project G083118N (Belgium).
LV is member of the GNSAGA of INdAM, Italy.

%
%

{
  \footnotesize
  \renewcommand{\arraystretch}{0.5}

}

%
%

\end{document}